\begin{document}

\title{A Variational Approach to Path Estimation and Parameter Inference of Hidden Diffusion Processes}

\author{\name Tobias Sutter \email sutter@control.ee.ethz.ch \\
       \addr Department of Electrical Engineering and Information Technology\\
       ETH Zurich, Switzerland
       \AND
       \name Arnab Ganguly \email aganguly@lsu.edu \\
       \addr Department of Mathematics\\
       Louisiana State University, USA
       \AND
       \name Heinz Koeppl \email heinz.koeppl@bcs.tu-darmstadt.de \\
       \addr Department of Electrical Engineering and Information Technology\\
       TU Darmstadt, Germany}

\editor{Manfred Opper}

\maketitle

\begin{abstract}
We consider a hidden Markov model, where the signal process, given by a diffusion, is only indirectly observed through some noisy measurements. The article develops a variational method for approximating the hidden states of the signal process given the full set of observations.\ This, in particular, leads to  systematic approximations  of the smoothing densities of the signal process.\ The paper then demonstrates how an efficient inference scheme, based on this variational approach to the approximation of the hidden states, can be designed to estimate the unknown parameters of  stochastic differential equations.\ Two examples at the end illustrate the efficacy and the accuracy of the presented method. 
\end{abstract}

\begin{keywords}
Variational inference, stochastic differential equations, diffusion processes, hidden Markov model, optimal control
\end{keywords}

\section{Introduction} \label{sec:introduction}
Diffusion processes modeled by stochastic differential equations (SDEs) appear in several disciplines varying from mathematical finance to systems biology. For example, in systems biology  stochastic differential equations are used for efficient modeling of the states of the chemical species in a reaction system when they are present in high abundance \cite{ref:Wilkinson-06}. Oftentimes, the state of the system or the signal process is not directly observed, and inference of the state trajectories and parameter of the system has to be achieved based on noisy partial observations. Typically, in such a scenario, the observation data is conveniently modeled as a function of the hidden state corrupted with independent additive noise. However, generalizations of this basic setup, which, for example, could include stronger coupling between the hidden signal and the observation processes, are often used for modeling more complex phenomena.   

In such a  model optimal filtering theory concerns itself with recurrent estimation of the current state of the hidden signal process given the observation data until the present time. This is particularly useful in tracking problems where the estimation of the current location of an object needs to be constantly updated as new noisy information flows in.  On the other hand, optimal smoothing involves the class of methods which can be used in reconstruction of any past state of the signal process given a set of measurements up to the present time. More specifically, given the signal process $X$ and the observation process $Y$, filtering theory entails computation of the conditional expectations of the form $\Expec{\phi(X_t)| \SC{F}^Y_t}$, where $\{\SC{F}^Y_t\}$ denotes the filtration generated by the process $Y$. The $\sigma$-algebra $\SC{F}^Y_t$ contains all the information about the observation process $Y$ up to the present time $t$. Smoothing, however, involves evaluation of the conditional expectations of the form $\Expec{\phi(X_s)| \SC{F}^Y_t}$, where $s < t$.  The smoothing techniques can also be viewed as tools of estimation of the current state given a data set which includes future observations. This interpretation is particularly relevant in statistics, where such techniques are essentially the means of computing certain posterior conditional densities given the observation set. The present article focusses on a variational approach to this smoothing problem and later employs the method for estimation of parameters of diffusion processes. 

Evaluation of such conditional expectations or densities are quite difficult, since they are often solutions of suitable (stochastic) partial differential equations. These are usually infinite-dimensional problems and analytical solutions are generally impossible.
 Hence, effort has been directed toward developing of a variety of numerical schemes for efficient approximation of these conditional densities. While Markov chain Monte Carlo methods for inference use discretization of the given SDE for writing down an approximate likelihood \cite{ref:Kushner-01, ref:pages-05, ref:Andrieu-10}, particle methods approximate the (posterior) conditional densities by suitably weighted point masses \cite{ref:Crisan-99, ref:DelMoral-01,ref:Bain-09}. However, these methods often rely on a suitable discretization of the problem which is mostly done in an ad-hoc way. Since a theoretical framework for obtaining approximations is not present, the approximation error might be difficult to quantify. 
 
In contrast, the present paper focusses on a variational approach to this estimation problem. The main idea in such a method is to approximate the (posterior) conditional probability distribution of the system's state (given the observed data) by an appropriate Gaussian distribution, where the optimal parameters for the Gaussian distribution are obtained by minimizing the relative entropy (or Kullback-Leibler distance) between the posterior process and a suitable approximating SDE. Earlier works like \cite{ref:Archambeau-07, ref:Archambeau-08, ref:Archambeau-11, ref:Cseke-13, ref:Opper-15} considered the case when the signal process is modeled by an SDE with a constant diffusion term. The advantage of working with a constant diffusion term is that it  implies that the approximating SDE will simply have a linear drift so that  marginals are distributed as Gaussian. This simple expression of the SDE with a linear drift makes the subsequent optimization problem for finding the suitable parameters for this approximating SDE easier. However, since most physical phenomena cannot be realistically modeled by  SDEs with constant diffusion term, there is a pressing need of extending the approach to general SDEs. One natural but naive approach in this regard could be to freeze the diffusion term at an appropriate value, that is, to take the zeroth order expansion of the diffusion coefficient. Although simple to implement, the efficacy of the method is not guaranteed by theoretical results and will vary from case to case, and a reasonable error analysis might require unreasonably restrictive conditions on the model. 

Instead, the  present article delves much deeper in to the problem and develops methods for finding the optimal approximating SDE such that the relative entropy between it and the true posterior process is minimized subject to the condition that the marginals of the former follow Gaussian distributions.  The main obstacle that needs to be overcome in this approach stems from the fact that unlike the previous case, the approximating SDE here cannot be taken to be the one with a linear drift; and a suitable expression of it needs to be found so that the marginals are still Gaussian. This has been achieved in Theorem \ref{thm:brigo_mixture_multi}. 
In fact, our work outlines the most general techniques for approximating the posterior density by any  density from the exponential family or mixture of exponential families.  In this connection we would like to note that the reason for  requiring that the marginals follow a Gaussian distribution or more generally, a distribution from the exponential family because this results in a finite-dimensional smoother which can be used for approximating a wide range of distributions.

 It should be noted that the variational method considered here is different from the so-called extended Kalman filter (EKF) in two ways: first, EKF is employed for filtering problems; but more importantly, EKF starts by linearizing the signal (prior) SDE and then freezing its diffusion term, while the variational approach is concerned with approximation of the posterior SDE. Therefore even though in the constant diffusion term case, the approximating SDE happens to have linear drift and thus resulting in a Gaussian smoother, it is not based on the same philosophy behind the EKF.  And as mentioned before, in the non-constant diffusion term case although our method can be used to obtain a finite-dimensional smoother, in particular, a Gaussian smoother, it completely avoids any form of linearization of the given SDE or subsequent freezing of the diffusion term.

In our paper this variational approximation method has been formulated as an optimal control problem. The advantage of this theoretical framework is that necessary conditions for global optimality are then obtained by employing the Pontryagin maximum principle. This leads to considerable computational advantages of the variational method compared to numerically solving the underlying (stochastic) PDEs, that is highlighted by two examples.

The later part of the paper focusses on the important topic of parameter inference of SDEs. The above scheme of estimating the hidden states and the smoothing densities  is cleverly used in designing an efficient method for estimating parameters of SDEs. In particular, the paper proposes an iterative EM-type algorithm which aims to compute approximate maximum likelihood estimates of the parameters in a tractable way. Two illustrative examples, which are important  in mathematical finance, demonstrate the accuracy and efficiency of the proposed algorithms. Future projects will address more complicated models.

The layout of this article is as follows: In Section~\ref{sec:model:setup} we formally introduce the problem setting. We consider as a running example throughout the manuscript a geometric Brownian motion. The variational approximation idea is motivated in Section~\ref{sec:motivation} leading to a specific class of optimization problems that is addressed in Section~\ref{sec:prescribed_marginal_law}. It is then reformulated in Section~\ref{sec:optimal:control:problem:formulation} as an optimal control problem and necessary conditions for optimality are derived. Section~\ref{sec:parameter:inference} explains how the variational approximation can be used to infer unknown parameters of the model. Section~\ref{sec:discrete:measurements} discusses the presented variational approximation in the context of a discrete time measurement model. The theoretical results are applied in Section~\ref{sec:examples} to two examples: a geometric Brownian motion and to the Cox-Ingersoll-Ross process. We finally conclude with some remarks and directions for
future work in Section~\ref{sec:conclusion}. Certain technical proofs are relegated to the appendix.

\textit{Notation.} Hereafter, $\In$ is the n-dimensional identity matrix and $\Ei$ is the $n\times n$ matrix where the $ii$-th entry is one and zero elsewhere. We let $\sym(n,\R)$ and $\text{GL}(n,\R)$ be respectively the set of symmetric and invertible $n \times n$ matrices with real entries. For matrices $A,B\in\R^{n\times n}$ let $\inprod{A}{B}:=\tr(A\transp B)$ denote the Frobenius inner product. For a vector $b\in\R^n$ and a positive definite matrix A, we employ the norm  $\norm{b}_A := \sqrt{b\transp A^{-1}b}$. We define the standard $n-$simplex as $\Delta_{n}:=\left\{  x\in\R^{n} : x\geq 0, \sum_{i=1}^{n} x_{i}=1\right\}$. Let $\SC{C} := C([0,T], \R^n)$ denote the space of continuous functions on $[0,T]$ taking values in $\R^n.$ 
Let $\S$ be a metric space, equipped with its  Borel $\sigma$-field $\mathcal{B}(S)$. The space of all probability measures on $(\S, \mathcal{B}(\S))$ is denoted by $\mathcal{P}(\S)$. The relative entropy (or Kullback-Leibler divergence) between any two probability measures $\mu, \nu \in \mathcal{P}(\S)$ is defined as
\begin{equation*}
\KL{\mu}{\nu} := \left\{ \begin{array}{ll}
\int \log\left( \frac{\drv\mu}{\drv \nu} \right) \drv\mu, &\text{if } \mu \ll \nu\\
+\infty, & \text{otherwise},
\end{array} \right.
\end{equation*}
where $\ll$ denotes absolute continuity of measures and $\tfrac{\drv\mu}{\drv \nu}$ is the Radon-Nikodym
derivative. By convention \emph{measurable} means \emph{Borel-measurable} in the sequel.
Given an $\S$-valued random variable $X$ with Law($X$) $= \mu \in \mathcal{P}(\S)$, let $\CExpec{X}{\mu}$ denote the expectation of $X$.

\section{Model setup} \label{sec:model:setup}
As usual, we will work on a complete probability space  $(\Omega, \SC{F}, \mathbb P)$ equipped with a filtration $\{\SC{F}_t\}$ satisfying the usual conditions, that is, $\{\SC{F}_t\}$ is complete, right continuous and contains all the $\mathbb P$-null sets. 
The basic objects in our study consist of a signal process $X$ and an observation process $Y$, both of which are assumed to be $\{\SC{F}_t\}$- adapted. The unobserved signal process $X$ is modeled by  the following stochastic differential equation describing the state evolution of a dynamical system:
\begin{equation} \label{sec:one:eq1}
	 \drv X_{t} = f(X_{t})\drv t+\sigma(X_{t})\drv W_{t},\quad X_{0}=x_{0}, \quad 0\leq t \leq T, 
	\end{equation}
where  $f:\R^n\to\R^n$, $\sigma:\R^n\to\R^{n\times n}$, and $W$ is an $n$-dimensional Brownian motion independent of $x_{0}$. The observation process $Y$ is modeled as noisy measurements of some function of the  signal process $X$. Mathematically, $Y$ is defined as	\begin{equation}\label{eq:measurement:model:cts}
	Y_{t} = \int_{0}^{t} h(X_{s}) \drv s + B_{t},
	\end{equation}
	where $h:\R^{n}\to\R^{m}$ is called the observation function and $B$ is an $m$-dimensional Brownian motion independent of $x_{0}$ and $W.$
	 
 \begin{assumption} \label{ass:sde_main_ass} \em{
 We stipulate that 
 \begin{enumerate}[(i)]
 \item $f$ and $\sigma$ are globally Lipschitz;
 \item and $h$ is twice continuously differentiable.
 \end{enumerate} }
\end{assumption}
It is known \cite{ref:Kallenberg-02} that under Assumption~\ref{ass:sde_main_ass} there exists a unique strong solution to the SDE~\eqref{sec:one:eq1}. Given the observed data up to some time $T$, $\{Y_s: s\leq T\}$, the goal of the paper is to outline an approximation method for the smoothing density, $\Ps(x,t)$, which is the conditional probability density of $X_t$ given $\{Y_s: s\leq T\}$. In other words,   the smoothing density is defined by the equation:
	\begin{equation} \label{eq:def:smoothing:density}
	\Expec{\phi(X_{t})|\FTY} = \int \phi(x) \Ps(x,t) \ \drv x,
	\end{equation}
	up to a.s. equivalence, where $\phi$ is any bounded measurable function from $\R^n$ to $\R$ and $\{\SC{F}_t^Y\}$ denotes the filtration generated by the process $Y$. 

More generally, we will be interested in approximating the full conditional probability measure on the path space, $\SC{C} \equiv C([0,T], \R^n)$. To describe this mathematically, assume that a regular conditional probability measure $\Prob{\cdot | \SC{F}^Y_T}$ is chosen. Then there exists a measurable probability kernel $y \in \SC{C} \rt \post{(\cdot, y)} \in \mathcal{P}(\SC{C})$ such that for any measurable set $\SC{A} \subset \SC{C}$, 
	$$\Prob{X_{[0,T]} \in \SC{A} | \SC{F}^Y_T} =  \post(A,Y_{[0,T]}).$$
	
	Given the observation process up to time $T$, $Y_{[0,T]}$, we now describe a characterization of the probability measure $\post(\cdot, Y_{[0,T]})$, which will play a pivotal role for our purposes. The probability measure $\post(\cdot, Y_{[0,T]})$ is actually the distribution of a diffusion process $\bar{X}^{T}$ on $\SC{C}$, and the latter is obtained by a modification of the original signal process $X$:
	\begin{align}\label{eq:postdiff}
	d\bar{X}^{T}_t = g(\bar{X}^{T}_t, t) dt + \sigma(\bar{X}^{T}_t) d\bar{W}_t, \quad \bar{X}^T_0=x_{0},
	\end{align} 
where $\bar{W}$ is an $\{\SC{F}_t\}$-adapted Brownian motion that is independent of $Y$. Notice that the diffusion coefficient of the above SDE (which we will henceforth call the posterior SDE or posterior diffusion) is same as that of the original SDE, and the drift of this posterior SDE is time-dependent and is obtained as
\begin{equation}\label{newdrift}
	 g(x,t):=f(x)+a(x)\nabla\log w(x,t),
	\end{equation}
where $a(x):=\sigma(x) \sigma(x)\transp$\!.
We give details about the (random) function $w$ a little later, but the important point to note  here is that the new drift function is the old drift function with an extra additive term, and the observation process $Y_{[0,T]}$ enters into the characterization of $\post(\cdot, Y_{[0,T]})$ only through $w$.

	To see this characterization of $\post(\cdot, Y_{[0,T]})$, we first look at the usual filtering density $\Pf(x,t)$, which is naturally defined by 
	\begin{equation} \label{eq:def:filter:density}
	\Expec{\phi(X_{t})|\FtY} = \int \phi(x) \Pf(x,t) \ \drv x.
	\end{equation}
	Under suitable technical conditions, the filter density $\Pf$ satisfies the Kushner-Stratonovich equation (for example, see \cite{ref:Stratonovich-60,ref:Kushner-67,ref:Bain-09}). For our purposes, however, it is convenient to work with the unnormalized filter density $p(x,t)$, that is, $\Pf(x,t) = p(x,t) ( \int_{\R^{n}} p(x,t) \drv x)^{-1}$, which satisfies the so-called Zakai equation \cite{ref:Zakai-69}
	\begin{equation}\label{eq:Zakai}
	\left\{ \begin{aligned}
	\drv p(x,t) &=  \mathcal{A}^{*} p(x,t) \drv t + p(x,t) h(x)\transp \drv Y_{t} \\
	p(x,0) &= p_{0}(x).
	\end{aligned} \right.
	\end{equation}
      Here $p_{0}$ denotes the density of $x_{0}$ and $\mathcal{A}^{*}$ is the adjoint of the infinitesimal generator of the process $X$ given by
	$ \mathcal{A}\psi(x)=\sum_i f_i(x)\frac{\partial}{\partial x_i}\psi(x) + \frac{1}{2}\sum_{i,j}a_{i,j}(x)\frac{\partial^2}{\partial x_i \partial x_j}\psi(x)$ for $\psi \in \SC{C}_0^2(\R^n,\R)$. We next consider the backward stochastic partial differential equation (SPDE)
	\begin{equation} \label{eq:backward:SPDE:v}
	\left\{ \begin{aligned}
	\drv w(x,t) &=  -\mathcal{A} w(x,t) \drv t - w(x,t) h(x)\transp \drv Y_{t} \\
	w(x,T) &= 1.
	\end{aligned} \right.
	\end{equation}
	Conditions about existence of solutions to \eqref{eq:Zakai} and \eqref{eq:backward:SPDE:v} can be found in \cite{ref:Pardoux-81}. It is well known \cite[Corollary~3.8]{ref:Pardoux-81} that the smoothing density can be expressed as
	\begin{equation} \label{eq:smoothing:density:char}
	\Ps(x,t)=\frac{p(x,t)w(x,t)}{\int_{\R^{n}}p(x,t)w(x,t)\drv x}.
	\end{equation}
Now by using \eqref{eq:Zakai}, \eqref{eq:backward:SPDE:v} and \eqref{eq:smoothing:density:char}, it can be shown\footnote{See Appendix~\ref{app:smoothing:SDE} for a detailed derivation.} that  the smoothing density solves the following Kolmogorov forward equation
\begin{equation} \label{eq:PDE:posterior2}  
\left( \frac{\partial}{\partial t}+\sum_{i} \frac{\partial}{\partial x_{i}} g(x,t)-\frac{1}{2}\sum_{i,j}\frac{\partial^{2}}{\partial x_{i} \partial x_{j}} a_{ij}(x) \right)\Ps(x,t )=0,
	\end{equation}
	with the drift term $g$ defined by \eqref{newdrift}.
In other words, the conditional probability measure $\post(\cdot,Y_{[0,T]})$ on $\SC{C}$ is induced by the diffusion process $\bar{X}^{T}$ as defined in \eqref{eq:postdiff}. 

Evaluating $\post(\cdot,Y_{[0,T]})$ is what is known as the path estimation problem. Except for a few simple cases, the SPDEs, that are involved in this estimation of the hidden path,  are  analytically intractable. The variational approach that we undertake in this paper actually has the goal of approximating $\post(\cdot,Y_{[0,T]})$. Toward this end, a natural objective is to approximate $\post(\cdot,Y_{[0,T]})$ by  a probability measure such that the corresponding marginals of the latter come from a known family of distributions (e.g, exponential family). As a result, the marginal of this approximating probability measure at time $t$ approximates the smoothing density $\Ps(x,t)$. The procedure adopted in this article involves finding the optimal parameters of this approximating distribution by minimizing the relative entropy between the posterior distribution and the approximating one.

\subsection{Example: Geometric Brownian Motion} \label{sec:GBM:example:1}
 We present as a running example throughout the article  the geometric Brownian motion that is used to model stock prices in the Black-Scholes model, see \cite{ref:Shiryaev-99}. The system dynamics \eqref{sec:one:eq1} is given by a one-dimensional geometric Brownian motion
	\begin{equation} \label{eq:system:GBM1}
	 	\drv X_{t} = \kappa X_{t}\drv t+\lambda X_{t} \drv W_{t},\quad X_{0}=x_0\sim \log\mathcal{N}(\mu,\sigma), 
	\end{equation}
	for $0\leq t \leq T$, $\lambda, \kappa >0$ and an observation process \eqref{eq:measurement:model:cts} defined by
	\begin{equation}\label{eq:measurement:model:GBM1}
	Y_{t} = \int_{0}^{t} X_{s} \drv s + B_{t}.
	\end{equation}
It is straightforward to see that Assumption~\ref{ass:sde_main_ass} holds in this setting.

\section{Variational approximation: Motivation} \label{sec:motivation}
Let $\prior$ denote the distribution of the original signal process $X$ on $\SC{C}$, that is, for a measurable $\SC{A} \subset \SC{C}$, $\prior(A) \equiv \Prob{X_{[0,T]}\in A}$. Define the two terms
\begin{align}
H_{T}(X_{[0,T]},y) &:= -h(X_{T})y_{T} +  \int_{0}^{T} y_{s} \drv h(X_{s}) + \frac{1}{2} \int_{0}^{T} \| h(X_{s}) \|^{2} \drv s  \\
I(H_{T}(\cdot,y)) &:= -\log\left( \int \exp\left( - H_{T}(\cdot,y) \right)\drv\prior \right). \label{eq:information:term}
\end{align}
Let $y$ be a sample path of the observation process $Y$ on the interval $[0,T]$. Then notice that by the pathwise Kallianpur-Striebel formula (or the Bayes formula), we have
\begin{align*}
\frac{\drv \post(\cdot,y)}{\drv \prior}& = \frac{\exp(-H_{T}(\cdot,y))}{\int \exp( -H_{T}(\cdot,y)) \drv\prior  } = \frac{\exp(-H_{T}(\cdot,y))}{L(y)}.
\end{align*}
where $L(y)=\int \exp( -H_{T}(\cdot,y)) \drv\prior.$
Consequently, $L(y)$ can be interpreted naturally  as the likelihood of the path $y$, or equivalently,  $I(H_{T}(\cdot,y))$ is viewed as the negative log-likelihood of the sample path $y$. The term $H_{T}(X_{[0,T]},y)$ can be interpreted as the $X$-conditional information and the information in the observation that $Y=y$, see \cite{ref:Mitter-03} for more details.
Now for any probability measure $Q$\footnote{$Q$ will be called the approximating probability measure in the sequel.} on $C([0,T],\R)$, the relative entropy between $Q$ and $\post(\cdot,y)$ can be expressed by the following lemma.
 
\begin{lemma} \label{Lemma:KL:cost}
$\KL{Q}{\post(\cdot,y)} = - I(H_{T}(\cdot,y)) + \KL{Q}{\prior} + \CExpec{H_{T}(\cdot,y)}{Q}$.  
\end{lemma}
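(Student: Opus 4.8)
The plan is to prove the identity by a direct Radon--Nikodym computation, starting from the Kallianpur--Striebel representation $\drv\post(\cdot,y)/\drv\prior = \exp(-H_T(\cdot,y))/L(y)$ recorded just above the statement.

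First I would dispose of the degenerate case. Since $\drv\post(\cdot,y)/\drv\prior = \exp(-H_T(\cdot,y))/L(y)$ is strictly positive $\prior$-a.s.\ (granted $0 < L(y) < \infty$, which holds for $\PP$-a.e.\ observation path $y$ under the standing Assumption~\ref{ass:sde_main_ass} together with the usual integrability built into the Kallianpur--Striebel formula), the measures $\post(\cdot,y)$ and $\prior$ are mutually absolutely continuous. Hence $Q \ll \post(\cdot,y)$ if and only if $Q \ll \prior$, so when $Q \not\ll \prior$ both sides of the claimed identity equal $+\infty$: the left-hand side by definition of relative entropy, and the right-hand side because $\KL{Q}{\prior} = +\infty$ while the remaining two terms are finite. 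Thus we may assume $Q \ll \prior$ henceforth.

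On this event, with $\prior$ and $\post(\cdot,y)$ equivalent, the chain rule for Radon--Nikodym derivatives gives
\begin{equation*}
\frac{\drv Q}{\drv \post(\cdot,y)} = \frac{\drv Q}{\drv \prior}\cdot\frac{\drv \prior}{\drv \post(\cdot,y)} = \frac{\drv Q}{\drv \prior}\cdot L(y)\exp\big(H_T(\cdot,y)\big),\qquad Q\text{-a.s.}
\end{equation*}
Taking logarithms and integrating against $Q$ yields
\begin{equation*}
\KL{Q}{\post(\cdot,y)} = \int \log\frac{\drv Q}{\drv\prior}\,\drv Q + \log L(y) + \CExpec{H_T(\cdot,y)}{Q} = \KL{Q}{\prior} + \log L(y) + \CExpec{H_T(\cdot,y)}{Q},
\end{equation*}
and substituting $\log L(y) = -I(H_T(\cdot,y))$, which is precisely the definition \eqref{eq:information:term}, gives the claim.

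The only genuine obstacle is the measure-theoretic bookkeeping in the last display: one must check that $\CExpec{H_T(\cdot,y)}{Q}$ is well-defined (no $\infty-\infty$) and that the integral of the logarithm legitimately splits into three separate integrals. I would handle this by restricting to those $Q$ for which $\CExpec{|H_T(\cdot,y)|}{Q} < \infty$ (equivalently, for which the right-hand side is finite) — the only regime of interest for the subsequent variational problem — so that every term above is finite and the additivity of the integral applies; outside this regime both sides are again $+\infty$. The analytic input, namely positivity and finiteness of $L(y)$ for a.e.\ $y$, is inherited from the Kallianpur--Striebel formula and requires no new argument.
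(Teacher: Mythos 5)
Your proof is correct and follows essentially the same route as the paper: both arguments amount to the chain rule for Radon--Nikodym derivatives combined with the Kallianpur--Striebel formula and the identity $\log L(y) = -I(H_T(\cdot,y))$, the only cosmetic difference being that you decompose $\KL{Q}{\post(\cdot,y)}$ directly while the paper decomposes $\KL{Q}{\prior}$ and rearranges. The extra bookkeeping you supply (mutual absolute continuity of $\prior$ and $\post(\cdot,y)$, the $Q\not\ll\prior$ case, and integrability of $H_T$ under $Q$) is a welcome refinement that the paper's proof leaves implicit.
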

\indent \textit{Proof.}
The proof essentially follows the one in \cite[Lemma~2.2.1]{ref:vanHandel-07}.
Splitting the relative entropy and using the pathwise Kallianpur-Striebel formula yields
\begin{align*}
&\KL{Q}{\prior} 	=	 	\int \left[ \log\left( \frac{\drv Q}{\drv \post(\cdot,y)} \right) + \log\left( \frac{\drv \post(\cdot,y)}{\drv \prior} \right)  \right] \drv Q  \\
					&\hspace{8mm}= 	\KL{Q}{\post(\cdot,y)} + \int \log\left( \frac{\drv \post(\cdot,y)}{\drv \prior} \right) \drv Q  \\
					&\hspace{8mm}= 	\KL{Q}{\post(\cdot,y)} + \int \log \left( \frac{\exp(-H_{T}(\cdot,y))}{\int \exp( -H_{T}(\cdot,y))\drv \prior}  \right) \drv Q \\
					&\hspace{8mm}= 	\KL{Q}{\post(\cdot,y)} - \CExpec{H_{T}(\cdot,y)}{Q} - \log\left( \int \exp( -H_{T}(\cdot,y))\drv \prior \right). \qquad\endproof
\end{align*}
Mitter and Newton \cite{ref:Mitter-03} provide an information-theoretic interpretation to this result. They interpret the term \eqref{eq:information:term} as the \emph{total information} available to the estimator $Q$ through the sample path $y$. On the other hand, they call the quantity $\mathcal{F}(Q,y):=\KL{Q}{\prior} + \CExpec{H_{T}(\cdot,y)}{Q}$ the \emph{apparent information} of the estimator. By non-negativity of the relative entropy $\mathcal{F}(Q,y) \geq I(H_{T}(\cdot,y))$ with equality if and only if $Q=\post(\cdot,y)$. In this sense, a suboptimal estimator appears to have access to more information than is actually available. 

Since the total information $I(H_{T}(\cdot,y))$ does not depend on $Q$, minimizing the relative entropy between $Q$ and $\post(\cdot,y)$ over a class of probability measures $Q$ is equivalent to minimizing  the apparent information $\mathcal{F}(Q,y)$. This motivates to consider an approximating distribution $Q$ on $\SC{C}$ that is characterized as the solution to the following optimization problem: 
 
\begin{problem} \label{prob:optimization:problem}
Minimize $\KL{Q}{\prior} + \CExpec{H_{T}(\cdot,y)}{Q}$ subject to
\begin{enumerate}[(i)]
\item $Q$ is a probability distribution induced by an SDE of the form \label{eq:problem:i} 
\begin{equation} \label{eq:approx_SDE}
\drv Z_{t}  =u(Z_{t},t)\drv t + \sigma(Z_{t})\drv W_{t},\quad Z_{0}=x_{0},\quad 0\leq t \leq T;
\end{equation}
\item The marginals of $Q$ at time $t$, i.e., the distribution of $Z_{t}$, belong to a chosen family of distributions. \label{eq:problem:ii}
\end{enumerate}  
\end{problem}
We will show in the remainder of this article how Problem~\ref{prob:optimization:problem} can be restated as an optimal control problem, which leads to a standard formulation of necessary optimality conditions in terms of Pontryagin's maximum principle.

Note that the objective function of Problem~\ref{prob:optimization:problem} is known to be strictly convex with respect to $Q$, see \cite{ref:Csiszar-75}. The constraint~\eqref{eq:problem:ii} restricts the feasible set approximating distributions $Q$ to a nonconvex set. Note that such problems (i.e., absence of constraint~\eqref{eq:problem:i} have been studied in the literature \cite{ref:Pinski-15}). In our setting, the set of feasible solutions is also coupled with the first constraint \eqref{eq:problem:i}, that parametrizes the feasible set of distributions in terms of the drift function $u$. This coupling is investigated in Section~\ref{sec:prescribed_marginal_law}, in particular Theorem~\ref{thm:brigo_mixture_multi} characterizes the set of all drift terms $u$ such that the distribution induced by \eqref{eq:approx_SDE} has finite dimensional marginals that belong to a given family of distributions. 
Hence, Problem~\ref{prob:optimization:problem} can alternatively be interpreted as minimizing the objective function over a class of drift functions $u$ that induce $Q$ via \eqref{eq:approx_SDE} and such that $Q$ satisfies constraint~\eqref{eq:problem:ii}.
For example, if the goal is to approximate the posterior distribution $\Pi_{post}$ by a distribution $Q$ whose marginals are normal distributions, then one aims to find a drift term $u$ such that the objective function is minimized and such that the solution $Z_{t}$ to \eqref{eq:approx_SDE} admits a normal distribution.

\begin{remark}
{\rm Notice that the unconstrained optimization of the objective function in Problem \ref{prob:optimization:problem} with respect to $Q$ will simply yield the minimizer $Q$ to be $\post$. Since, as discussed in the beginning of Section \ref{sec:model:setup}, $\post$ is induced by the SDE, \eqref{eq:postdiff}, the constraint $(i)$ in Problem \ref{prob:optimization:problem} is essentially inbuilt. In other words, it is the constraint $(ii)$ which plays the crucial role in the methods outlined in this paper.

}

\end{remark}
The objective function in Problem~\ref{prob:optimization:problem}, in particular the relative entropy between the approximating distribution $Q$ and the prior distribution $\prior$ can be simplified, since due to the constraint~\eqref{eq:problem:i} the underlying SDEs \eqref{eq:approx_SDE} and \eqref{sec:one:eq1} share the same diffusion coefficient. 
In view of \eqref{eq:approx_SDE} and \eqref{sec:one:eq1}, consider two SDEs for $0\leq t \leq T$
	\begin{align*}
	  \drv X_{t} &= f(X_{t})\drv t+\sigma(X_{t})\drv W_{t},\quad \drv Z_{t} = u(Z_{t},t)\drv t+\sigma(Z_{t})\drv W_{t},\quad X_{0}=Z_{0} = x_0, 
	\end{align*}
	with $u:\R^n\times\R\to\R^n, f:\R^n\to\R^n, \sigma:\R^n\to\R^{n\times n},$ $W$ an $n$-dimensional Brownian motion independent of $x_{0}$ and both SDEs satisfying Assumption~\ref{ass:sde_main_ass}. Let $(\Omega,\mathcal{F}_T,P)$ be a probability space, where $\mathcal{F}_T$ is the sigma algebra $\sigma(W_{s} : s\leq T)$ and let $\prior$ and $Q$ denote the the laws of $X_{t}$ and $Z_{t}$ with respect to $P$.	
It follows by Girsanov's Theorem \cite{ref:Oksendal-03}, that
	\begin{equation*}
	\CExpec{\log\left( \frac{\drv Q}{\drv \prior} \right)}{Q}  = \frac{1}{2}\CExpec{\int_0^T \varphi(s,\omega)\transp \varphi(s,\omega) \drv s}{Q},
	\end{equation*}
	where
	  $\varphi(s,\omega) := \sigma(Z_s(\omega))^{-1}\left(u(X_s(\omega))-f(X_s(\omega))\right).$
	Therefore, the relative entropy between $Q$ and $\prior$ is
	\begin{equation*} 
	\KL{Q}{\prior}=   \frac{1}{2} \CExpec{\int_0^T \norm{u(X_s,s)-f(X_s)}_{a(X_s)}^2 \drv s}{Q},
	\end{equation*} 
	where $\left\lVert u(x,s)-f(x) \right\rVert^2_{a(x)}:=\left( u(x,s)-f(x) \right)\transp {a(x)}^{-1}\left( u(x,s)-f(x) \right)$. Hence, the objective function in Problem~\ref{prob:optimization:problem} can be expressed as
\begin{equation}\label{eq:obj_fct}
\begin{aligned}
&\KL{Q}{\prior} + \CExpec{H_{T}(\cdot,y)}{Q}\\
&\quad = \int_{0}^{T} \mathbb{E}_{Q}\left[ \frac{1}{2}\norm{u(X_{t},t)-f(X_{t})}_{a(X_{t})}^2 + y_{t}\bigg( u(X_{t},t)\transp \nabla h(X_{t}) \right. \\
&\quad \quad \quad + \frac{1}{2}\sigma(X_{t})\transp \nabla^{2} h(X_{t})\sigma(X_{t}) \bigg) + \left. \frac{1}{2} \| h(X_{t}) \|^{2} \right] \drv t  - y_{T}\CExpec{h(X_{T})}{Q},
\end{aligned}
\end{equation}
where the last equality is due to Fubini's Theorem and It\^o's Lemma. 
The two coupling constraints \eqref{eq:problem:i} and \eqref{eq:problem:ii} in Problem~\ref{prob:optimization:problem} are studied in the next section and will finally allow us to reformulated Problem~\ref{prob:optimization:problem} as an optimal control problem.

\section{Multi-dimensional SDE with prescribed marginal law} \label{sec:prescribed_marginal_law}
This section establishes conditions on the drift function in the approximate SDE \eqref{eq:approx_SDE} such that the induced marginal distributions evolve in a given exponential family.
 
	\begin{definition}[Exponential family] \label{def:muli_exp_fam}
	  Let $\Hi_1,\hdots,\Hi_m$ be Hilbert spaces and let $\Hi=\prod_{i=1}^m \Hi_i$ be endowed with the inner product $\inprod{\cdot}{\cdot}$. Let the functions $c_i: \R^n\to \Hi_i$ for $i=1,\hdots,m$ be linearly independent, have at most polynomial growth, be twice continuously differentiable and denote $c(x)=(c_1(x),\hdots,c_m(x))$. Assume that the convex set \\
	  $\Gamma:=\left\{ \Theta\in \Hi \ : \ \psi(\Theta)=\log \int\exp\left( \inprod{\Theta}{c(x)} \right)\drv x<\infty \right\}$
	   has non-empty interior. Then 
	    \begin{equation*}
	     \text{EM}(c)= \{ p(\cdot,\Theta), \Theta \in \Lambda \}, \quad p(x,\Theta):=\exp\left(\inprod{\Theta}{c(x)}-\psi(\Theta) \right),
	    \end{equation*}
	    where $\Lambda\subseteq \Gamma$ is open, is called an \textit{exponential family of probability densities}.  
	  \end{definition}
	  
	  \begin{definition}[Mixture of exponential families]
	  Let $\text{EM}(c^{(i)})$ for $i=1,\hdots,k$ be exponential families according to Definition \ref{def:muli_exp_fam}. Then
	  \begin{equation*}
	  \begin{aligned}
	   \text{EM}(c^{(1)},\hdots,c^{(k)}) &= \bigg\{ \sum_{\ell=1}^k\nu_\ell p_\ell(\cdot,\Theta^{(\ell)}) \ : \ p_\ell(\cdot,\Theta^{(\ell)})\in \text{EM}(c^{(\ell)}), \ \nu \in \Delta_{k}  \bigg\}
	   \end{aligned}
	  \end{equation*}
	   is called a \textit{mixture of $k$ exponential families of probability densities}.  
	 \end{definition} 
	  Consider the stochastic differential equation \eqref{eq:approx_SDE},
	  where  $u:\R^n\times\R\to\R^n$, $\sigma:\R^n\to\R^{n\times d}$ and $W$ is a $d$-dimensional Brownian motion independent of $x_{0}$.
	   
	  \begin{assumption} \label{ass:brigo:thm} \em{ \quad \\ 
	  \vspace{-4mm}\begin{enumerate}
	  \item The SDE~\eqref{eq:approx_SDE} satisfies Assumption \ref{ass:sde_main_ass}.
	   \item The initial condition $x_{0}$ has a density $p_{0}$ that is absolutely continuous with respect to the Lebesgue measure and has finite moments of any order.
	   \item The unique solution $X_{t}$ to \eqref{eq:approx_SDE} admits a density $p(x,t)$ that is absolutely continuous with respect to the Lebesgue measure and that satisfies the Kolmogorov forward equation.
	   \end{enumerate} }  
	  \end{assumption}

	  \begin{problem} \label{problem1}
	   Let $\text{EM}(c^{(1)},\hdots,c^{(k)})$ be a mixture of exponential families, let $p_0$ be a density contained in $\text{EM}(c^{(1)},\hdots,c^{(k)})$, let $\sigma$ be a diffusion term and let $a(\cdot):=\sigma(\cdot)\sigma(\cdot)\transp$. Let $\mathcal{U}(x_0,\sigma)$ denote the set of all drifts $u$ such that $x_0,u,\sigma$ and its related SDE \eqref{eq:approx_SDE} satisfy Assumption~\ref{ass:brigo:thm}. Assume $\mathcal{U}(x_0,\sigma)$ to be non-empty. Then given a curve $t\mapsto p(\cdot,\Theta^{(1)}_t,\hdots,\Theta^{(k)}_t)$ in $\text{EM}(c^{(1)},\hdots,c^{(k)})$, find a drift in $\mathcal{U}(x_0,\sigma)$ whose related SDE has a solution with marginal density $p(\cdot,\Theta^{(1)}_t,\hdots,\Theta^{(k)}_t)$.  
	  \end{problem}
	 A solution to Problem~\ref{problem1} is given by the following theorem.
	 \begin{theorem} \label{thm:brigo_mixture_multi}
	  Given the assumptions and notation of Problem \ref{problem1}. Consider the SDE \eqref{eq:approx_SDE} with drift term
	  \begin{equation*}
	   \begin{aligned}
	    u_i(x,t) &= \frac{1}{2}\sum_{j=1}^n \frac{\partial}{\partial x_j}a_{ij}(x) + \frac{1}{2}\sum_{j=1}^n a_{ij}(x) \frac{\frac{\partial}{\partial x_j}p(x,\Theta^{(1)}_t,\hdots,\Theta^{(k)}_t)}{p(x,\Theta^{(1)}_t,\hdots,\Theta^{(k)}_t)} \\
	    &\hspace{8mm}- \frac{1}{p(x,\Theta^{(1)}_t,\hdots,\Theta^{(k)}_t)}\sum_{\ell=1}^k \nu_\ell p_\ell(x,\Theta^{(\ell)}_{t})\inprod{\dot{\Theta}^{(\ell)}_{t}}{\mathcal{I}_i^{(\ell)}(x)},
	   \end{aligned}
	  \end{equation*}
	    for $i=1,\hdots,n$, where
	    \begin{equation} \label{e:thm:integral}
	     \mathcal{I}_i^{(\ell)}(x) := \int_{-\infty}^{x_i} \varphi_i^{(\ell)}((x_{-i},\xi_i),\Theta^{(\ell)}_{t})\exp\left( \inprod{\Theta^{(\ell)}_{t}}{c^{(\ell)}(x_{-i},\xi_i)-c^{(\ell)}(x)} \right)\drv \xi_i ,
	    \end{equation}
	    $(x_{i-},\xi_i):=(x_1,\hdots,x_{i-1},\xi_i,x_{i+1},\hdots,x_n)\transp$ and the functions $\varphi_i^{(\ell)}:\R^n\times \Hi \to \Hi$ for all $\ell=1,\hdots,k$ satisfy
	   \begin{equation} \label{eq:cond_varphi_thm_multi_brigo}
	   \begin{aligned}
	    \sum_{i=1}^n \left. \inprod{\dot{\Theta}^{(\ell)}_{t}}{\varphi_i^{(\ell)}\left((x_{-i},\xi_i),\Theta^{(\ell)}_{t}\right)}\right|_{\xi_i=x_i}  
	    &= \inprod{\dot{\Theta}^{(\ell)}_{t}}{c^{(\ell)}(x)-\nabla_{\Theta} \psi_\ell(\Theta^{(\ell)}_{t})}.
	    \end{aligned}
	   \end{equation}
	    If $u\in \mathcal{U}(x_0,\sigma)$, then the SDE \eqref{eq:approx_SDE} solves Problem \ref{problem1}, i.e., $X_{t}$ has a density
	   \begin{equation*}
	    p_{X_{t}}(x)=\sum_{\ell=1}^k \nu_\ell \exp\left(\inprod{\Theta^{(\ell)}_{t}}{c^{(\ell)}(x)}-\psi_\ell(\Theta^{(\ell)}_{t})\right) , \quad \text{for all } t \leq T.
	   \end{equation*}
	 \end{theorem}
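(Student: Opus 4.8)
The plan is to verify directly that the prescribed marginal density solves the Kolmogorov (Fokker--Planck) forward equation associated with~\eqref{eq:approx_SDE} under the proposed drift, and then to conclude by a uniqueness argument. Write $\tilde p(x,t):=\sum_{\ell=1}^k\nu_\ell\, p_\ell(x,\Theta^{(\ell)}_t)$ with $p_\ell(x,\Theta)=\exp(\inprod{\Theta}{c^{(\ell)}(x)}-\psi_\ell(\Theta))$. When $u\in\mathcal{U}(x_0,\sigma)$, Assumption~\ref{ass:brigo:thm}(3) guarantees that the law of the solution of~\eqref{eq:approx_SDE} has a density $p(x,t)$ solving
\[
\frac{\partial}{\partial t}p(x,t)=-\sum_i\frac{\partial}{\partial x_i}\big(u_i(x,t)p(x,t)\big)+\frac12\sum_{i,j}\frac{\partial^2}{\partial x_i\partial x_j}\big(a_{ij}(x)p(x,t)\big),\qquad p(\cdot,0)=p_0 .
\]
Since the curve $t\mapsto(\Theta^{(1)}_t,\dots,\Theta^{(k)}_t)$ is (tacitly) chosen compatibly with the initial law, i.e. $\tilde p(\cdot,0)=p_0$, it suffices to show that $\tilde p$ solves the very same equation; uniqueness of solutions of the forward equation, a consequence of well-posedness of the martingale problem for~\eqref{eq:approx_SDE} under the Lipschitz conditions in Assumption~\ref{ass:brigo:thm}(1), then forces $p_{X_t}=\tilde p(\cdot,t)$ for all $t\le T$.

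First I would evaluate the two sides. Differentiating the exponential-family densities in $t$ and using $\nabla_\Theta\psi_\ell(\Theta)=\int c^{(\ell)}(x)p_\ell(x,\Theta)\,\drv x$ gives
\[
\frac{\partial}{\partial t}\tilde p(x,t)=\sum_{\ell=1}^k\nu_\ell\, p_\ell(x,\Theta^{(\ell)}_t)\inprod{\dot\Theta^{(\ell)}_t}{c^{(\ell)}(x)-\nabla_\Theta\psi_\ell(\Theta^{(\ell)}_t)} .
\]
On the other side, multiplying the formula for $u_i$ by $\tilde p$ collapses its first two summands into $\tfrac12\sum_j\partial_{x_j}(a_{ij}\tilde p)$, so that
\[
u_i(x,t)\tilde p(x,t)=\frac12\sum_j\frac{\partial}{\partial x_j}\big(a_{ij}(x)\tilde p(x,t)\big)-\sum_{\ell=1}^k\nu_\ell\, p_\ell(x,\Theta^{(\ell)}_t)\inprod{\dot\Theta^{(\ell)}_t}{\mathcal{I}_i^{(\ell)}(x)} .
\]
Substituting into the forward operator, the divergence of the first term cancels the second-order term exactly, and the forward equation reduces to the identity $\partial_t\tilde p=\sum_i\partial_{x_i}\big(\sum_\ell\nu_\ell\, p_\ell(x,\Theta^{(\ell)}_t)\inprod{\dot\Theta^{(\ell)}_t}{\mathcal{I}_i^{(\ell)}(x)}\big)$.

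The heart of the proof is then to establish, for each $\ell$ separately, that $\sum_i\partial_{x_i}\big(p_\ell(x,\Theta^{(\ell)}_t)\inprod{\dot\Theta^{(\ell)}_t}{\mathcal{I}_i^{(\ell)}(x)}\big)=p_\ell(x,\Theta^{(\ell)}_t)\inprod{\dot\Theta^{(\ell)}_t}{c^{(\ell)}(x)-\nabla_\Theta\psi_\ell(\Theta^{(\ell)}_t)}$. Here I would use the elementary identity $p_\ell(x,\Theta)\exp(\inprod{\Theta}{c^{(\ell)}(x_{-i},\xi_i)-c^{(\ell)}(x)})=p_\ell((x_{-i},\xi_i),\Theta)$ to rewrite, via~\eqref{e:thm:integral},
\[
p_\ell(x,\Theta^{(\ell)}_t)\inprod{\dot\Theta^{(\ell)}_t}{\mathcal{I}_i^{(\ell)}(x)}=\int_{-\infty}^{x_i}\inprod{\dot\Theta^{(\ell)}_t}{\varphi_i^{(\ell)}((x_{-i},\xi_i),\Theta^{(\ell)}_t)}\,p_\ell((x_{-i},\xi_i),\Theta^{(\ell)}_t)\,\drv\xi_i ,
\]
an integral whose integrand does not depend on $x_i$ (only on $x_{-i}$ and the integration variable $\xi_i$). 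Differentiating in $x_i$ is then just the fundamental theorem of calculus, producing $\inprod{\dot\Theta^{(\ell)}_t}{\varphi_i^{(\ell)}(x,\Theta^{(\ell)}_t)}\,p_\ell(x,\Theta^{(\ell)}_t)$; summing over $i$ and invoking the defining relation~\eqref{eq:cond_varphi_thm_multi_brigo} for the $\varphi_i^{(\ell)}$ yields precisely the desired right-hand side. Weighting by $\nu_\ell$ and summing over $\ell$ then completes the verification that $\tilde p$ solves the forward equation.

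The routine parts are the chain rule and the telescoping of the drift into a divergence; the genuinely delicate points I anticipate are: (i) the appeal to uniqueness of the forward equation — this is exactly where the admissibility hypothesis $u\in\mathcal{U}(x_0,\sigma)$, i.e. Assumption~\ref{ass:brigo:thm}(3) together with well-posedness of the SDE, is indispensable, since without it one cannot identify the a priori unknown density of $X_t$ with $\tilde p$; and (ii) the justification of the fundamental-theorem-of-calculus step, which requires $\mathcal{I}_i^{(\ell)}$ to be well defined with a continuous integrand that decays as $\xi_i\to-\infty$ — this rests on the polynomial-growth and $C^2$ assumptions on the $c^{(\ell)}$, on $\Theta^{(\ell)}_t$ lying in the interior of the natural parameter set, and on the regularity tacitly built into the definition of $\mathcal{U}(x_0,\sigma)$. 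A minor additional point is that the curve $t\mapsto(\Theta^{(1)}_t,\dots,\Theta^{(k)}_t)$ (with fixed mixture weights $\nu$) must be chosen so that $\sum_\ell\nu_\ell\, p_\ell(\cdot,\Theta^{(\ell)}_0)=p_0$, ensuring that the two solutions of the forward equation share their initial datum.
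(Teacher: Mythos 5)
Your proposal is correct and follows essentially the same route as the paper's proof: split the drift into the part that telescopes into $\tfrac12\sum_j\partial_{x_j}(a_{ij}\tilde p)$ (cancelling the second-order term of the forward operator) and the $\mathcal{I}_i^{(\ell)}$-part, rewrite $p_\ell\inprod{\dot\Theta^{(\ell)}_t}{\mathcal{I}_i^{(\ell)}}$ as an integral of $\inprod{\dot\Theta^{(\ell)}_t}{\varphi_i^{(\ell)}}\,p_\ell$ whose integrand is independent of $x_i$, differentiate by the fundamental theorem of calculus, and invoke \eqref{eq:cond_varphi_thm_multi_brigo} to recover $\partial_t\tilde p$. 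Your explicit remarks on uniqueness of the forward equation and matching of the initial datum make precise what the paper leaves implicit in its ``if and only if'' reduction, but they do not change the argument.
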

	 The proof is provided in Appendix \ref{app:proof}.

	 \begin{remark} \label{rmk:Brigo:extension:thm}
	\rm{  \begin{enumerate}
	   \item For the non-mixture and one-dimensional case $(k=n=1)$, the result is known \cite{ref:Brigo-00} and coincides with Theorem \ref{thm:brigo_mixture_multi}. Furthermore, it can be seen by the proof in \cite{ref:Brigo-00} and by invoking the existence and uniqueness theorem for ODEs, that the drift function $u$ is uniquely determined.
	   \item For the multi-dimensional case $(n>1)$, the drift function is not unique anymore, as there exist multiple choices for $\varphi_i^{(\ell)}$\footnote{For example, $\varphi_i^{(\ell)}(x,\Theta^{(\ell)}_{t}):=\delta_{ij}(c^{(\ell)}(x)-\nabla_{\Theta} \psi_\ell(\Theta^{(\ell)}_{t}))$ for all $j\in \{1,\hdots,n\}$ are feasible choices for $\varphi_i^{(\ell)}$, as they satisfy \eqref{eq:cond_varphi_thm_multi_brigo}.}. This gives rise to a natural question, if there exist a particular choice of $\varphi_i^{(\ell)}$ such that the integral terms $\mathcal{I}_i^{(\ell)}$ in \eqref{e:thm:integral} admit closed-form expressions. In Section~\ref{sec:mixture_normal} (Proposition~\ref{prop:drift:fct}), we derive such functions $\varphi_i^{(\ell)}$ for the mixture of multivariate normal densities. 
	   \item In a non-mixture setting $(k=1)$, the drift function simplifies to
	    \begin{equation*} 
	   \begin{aligned}
	   u_i(x,t) &=  \frac{1}{2}\sum_{j=1}^n \frac{\partial}{\partial x_j} a_{i,j}(x) + \frac{1}{2}\sum_{j=1}^n a_{i,j}(x) \inprod{\Theta_{t}}{\frac{\partial c(x)}{\partial x_j}} \\
	   &\hspace{8mm}-\inprod{\dot{\Theta}_{t}}{\int_{-\infty}^{x_i} \varphi_i((x_{-i},\xi_i),\Theta_{t})\exp\left[ \inprod{\Theta_{t}}{c(x_{-i},\xi_i)-c(x)} \right]\drv \xi_i},
	  \end{aligned}
	  \end{equation*}
	  where the functions $\varphi_i$ have to satisfy \eqref{eq:cond_varphi_thm_multi_brigo}.
	  \end{enumerate} }  
	 \end{remark}
	 As remarked, the drift term proposed in Theorem~\ref{thm:brigo_mixture_multi} consists of the integral terms \eqref{e:thm:integral}, that depend on the particular exponential families considered. In the following, we restrict ourselves to the mixture of multivariate normal densities and show that these integral terms, and hence the drift function, admit a closed-form expression.
	 
	 \subsection{Mixture of multivariate normal densities} \label{sec:mixture_normal}
	 Consider the family of multivariate Gaussian distributions with mean $m\in\R^n$ and covariance matrix $S\in\sym(n,\R)$, that can be expressed in terms of Definition~\ref{def:muli_exp_fam} as follows.
	 Let the Hilbert space $\Hi=\R^n\times\R^{n\times n}$ be endowed with the inner product $\inprod{(a,A)}{(b,B)} =  a\transp b + \tr(A\transp B)$ and define
	  \begin{equation} \label{eq:multivar:normal:parameters}
	   \begin{aligned}
	    &\Theta =(\eta,\theta):=\left(S^{-1}m,-\frac{1}{2}S^{-1}\right) \in \Hi, \quad c: \R^n \to \Hi,\quad c(x) = (x,xx\transp) \\
	    &\psi: \Hi\to \R, \quad \psi(\Theta)=-\frac{1}{4}\tr(\eta \eta\transp \theta^{-1}) + \frac{1}{2}\log \det\left(- \frac{1}{2}\theta^{-1} \right) + \frac{n}{2}\log(2\pi).
	   \end{aligned}
	  \end{equation}
	  A direct computation, using $\tr(\eta \eta\transp \theta^{-1}) = \eta\transp \theta^{-1}\eta$, leads to
	  \begin{align*}
	    p(x,\Theta) &= \exp\left( \inprod{c(x)}{\Theta}-\psi(\Theta) \right) 
			= \frac{1}{(2\pi)^{\frac{n}{2}}(\det S)^{\frac{1}{2}}}\exp\!\left(\! -\frac{1}{2}(x-m)\transp S^{-1} (x-m)\! \right).
	  \end{align*}
	  We point out again that for the proposed variational method, it is favourable if the approximating SDE \eqref{eq:approx_SDE} has a drift function that admits a closed-form expression. Furthermore, since the drift function is not unique (cf. Remark~\ref{rmk:Brigo:extension:thm}), among all feasible solutions characterized by the $\varphi_i^{(\ell)}$ functions, we want to find one that can be computed analytically. The latter turns out to be a difficult task and depending heavily on the specific exponential familiy chosen. From now on, we consider the exponential family of the multivariate normal probability densities that is given by \eqref{eq:multivar:normal:parameters}. In this setting, it is possible to find functions $\varphi_i^{(\ell)}$ such that the integral terms \eqref{e:thm:integral}, and therefore the drift function, can be computed in closed form.
	   
	  \begin{proposition} \label{prop:drift:fct}
	  For the mixture of multivariate normal densities, one possible choice for the drift function proposed by Theorem \ref{thm:brigo_mixture_multi} is
	  \begin{align*} 
	    u(x,t) &= \frac{1}{2}\diverg{a(x)} \!+ \! \frac{\sum_{\ell=1}^k \nu_\ell p_\ell(x,\Theta^{(\ell)}_{t}) }{p(x,\Theta^{(1)}_t,\hdots,\Theta^{(k)}_t)}\bigg( \frac{1}{4}{\theta^{(\ell)}_{t}}^{-1}\dot{\theta}^{(\ell)}_{t} {\theta^{(\ell)}_{t}}^{-1}\eta^{(\ell)}_{t} - \frac{1}{2}{\theta^{(\ell)}_{t}}^{-1}\dot{\eta}_{t}^{(\ell)} {} \\ 
&\qquad - \frac{1}{2}{\theta^{(\ell)}_{t}}^{-1}\dot{\theta}^{(\ell)}_{t} x + a(x) \left( \frac{1}{2}\eta^{(\ell)}_{t} + \theta^{(\ell)}_{t} x \right) \!\!\! \bigg). 
	  \end{align*}  
	  \end{proposition}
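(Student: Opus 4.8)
The plan is to instantiate Theorem~\ref{thm:brigo_mixture_multi} for the Gaussian exponential family \eqref{eq:multivar:normal:parameters}: the theorem leaves the auxiliary functions $\varphi_i^{(\ell)}$ free apart from the scalar constraint \eqref{eq:cond_varphi_thm_multi_brigo}, and the only nontrivial objects are the integrals $\mathcal{I}_i^{(\ell)}$ in \eqref{e:thm:integral}. I would choose the $\varphi_i^{(\ell)}$ precisely so that each of these integrals collapses under the fundamental theorem of calculus, which is what produces the closed form.

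First I would rewrite the exponential weight in \eqref{e:thm:integral}. Since $p_\ell(x,\Theta^{(\ell)}_t)=\exp(\inprod{\Theta^{(\ell)}_t}{c^{(\ell)}(x)}-\psi_\ell(\Theta^{(\ell)}_t))$, the normalizers cancel and $\exp(\inprod{\Theta^{(\ell)}_t}{c^{(\ell)}(x_{-i},\xi_i)-c^{(\ell)}(x)})=p_\ell((x_{-i},\xi_i),\Theta^{(\ell)}_t)/p_\ell(x,\Theta^{(\ell)}_t)$, so that, writing $\Phi_i^{(\ell)}(x):=\inprod{\dot\Theta^{(\ell)}_t}{\varphi_i^{(\ell)}(x,\Theta^{(\ell)}_t)}$,
\[
\inprod{\dot\Theta^{(\ell)}_t}{\mathcal{I}_i^{(\ell)}(x)}=\frac{1}{p_\ell(x,\Theta^{(\ell)}_t)}\int_{-\infty}^{x_i}\Phi_i^{(\ell)}(x_{-i},\xi_i)\,p_\ell((x_{-i},\xi_i),\Theta^{(\ell)}_t)\,\drv\xi_i .
\]
Moreover, differentiating $\log p_\ell(x,\Theta^{(\ell)}_t)=\inprod{\Theta^{(\ell)}_t}{c^{(\ell)}(x)}-\psi_\ell(\Theta^{(\ell)}_t)$ in $t$ shows that the right-hand side of \eqref{eq:cond_varphi_thm_multi_brigo} equals $\partial_t\log p_\ell(x,\Theta^{(\ell)}_t)$, so the constraint is simply $\sum_{i=1}^n\Phi_i^{(\ell)}(x)=\partial_t\log p_\ell(x,\Theta^{(\ell)}_t)$. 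The heart of the argument is then the ansatz: pick the $\varphi_i^{(\ell)}$ so that $\Phi_i^{(\ell)}(x)\,p_\ell(x,\Theta^{(\ell)}_t)=\partial_{x_i}\!\big(g_i^{(\ell)}(x)\,p_\ell(x,\Theta^{(\ell)}_t)\big)$ for an \emph{affine} vector field $g^{(\ell)}(x)=b^{(\ell)}+A^{(\ell)}x$. Summing over $i$, this is equivalent to $\diverg{g^{(\ell)}(x)\,p_\ell(x,\Theta^{(\ell)}_t)}=\partial_t p_\ell(x,\Theta^{(\ell)}_t)$, and with this choice the $\xi_i$-integral telescopes to $g_i^{(\ell)}(x)p_\ell(x,\Theta^{(\ell)}_t)$, the lower limit contributing nothing because $g_i^{(\ell)}$ grows at most linearly while $p_\ell$ decays super-polynomially; hence $\inprod{\dot\Theta^{(\ell)}_t}{\mathcal{I}_i^{(\ell)}(x)}=g_i^{(\ell)}(x)$, which is closed form. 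Since only the functional $\inprod{\dot\Theta^{(\ell)}_t}{\cdot}$ is used downstream, any $\varphi_i^{(\ell)}$ realizing the prescribed value is admissible, and \eqref{eq:cond_varphi_thm_multi_brigo} holds automatically because $\sum_i\Phi_i^{(\ell)}=\partial_t\log p_\ell$.

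Next I would solve for $g^{(\ell)}$. Writing $m^{(\ell)}_t,S^{(\ell)}_t$ for the mean and covariance corresponding to $\Theta^{(\ell)}_t$, one has $\partial_t\log p_\ell=\dot m^{(\ell)\top}_t(S^{(\ell)}_t)^{-1}(x-m^{(\ell)}_t)+\tfrac12(x-m^{(\ell)}_t)^{\top}(S^{(\ell)}_t)^{-1}\dot S^{(\ell)}_t(S^{(\ell)}_t)^{-1}(x-m^{(\ell)}_t)-\tfrac12\tr((S^{(\ell)}_t)^{-1}\dot S^{(\ell)}_t)$, while $\diverg{g^{(\ell)}p_\ell}/p_\ell=\tr A^{(\ell)}-g^{(\ell)}(x)^{\top}(S^{(\ell)}_t)^{-1}(x-m^{(\ell)}_t)$; matching the quadratic, linear, and constant parts (using that $\dot S^{(\ell)}_t$ is symmetric, so only the symmetric part of $A^{(\ell)\top}(S^{(\ell)}_t)^{-1}$ matters) forces $A^{(\ell)}=-\tfrac12\dot S^{(\ell)}_t(S^{(\ell)}_t)^{-1}$ and $b^{(\ell)}=-\dot m^{(\ell)}_t+\tfrac12\dot S^{(\ell)}_t(S^{(\ell)}_t)^{-1}m^{(\ell)}_t$, i.e.
\[
g^{(\ell)}(x)=-\dot m^{(\ell)}_t-\tfrac12\dot S^{(\ell)}_t(S^{(\ell)}_t)^{-1}(x-m^{(\ell)}_t).
\]

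Finally I would substitute back into the drift of Theorem~\ref{thm:brigo_mixture_multi} and convert to natural parameters via $\theta^{(\ell)}_t=-\tfrac12(S^{(\ell)}_t)^{-1}$, $\eta^{(\ell)}_t=(S^{(\ell)}_t)^{-1}m^{(\ell)}_t$, whence $\dot S^{(\ell)}_t=\tfrac12(\theta^{(\ell)}_t)^{-1}\dot\theta^{(\ell)}_t(\theta^{(\ell)}_t)^{-1}$ and $\dot m^{(\ell)}_t=-\tfrac12(\theta^{(\ell)}_t)^{-1}\dot\eta^{(\ell)}_t+\tfrac12(\theta^{(\ell)}_t)^{-1}\dot\theta^{(\ell)}_t(\theta^{(\ell)}_t)^{-1}\eta^{(\ell)}_t$; this turns $-g^{(\ell)}(x)$ into $\tfrac14(\theta^{(\ell)}_t)^{-1}\dot\theta^{(\ell)}_t(\theta^{(\ell)}_t)^{-1}\eta^{(\ell)}_t-\tfrac12(\theta^{(\ell)}_t)^{-1}\dot\eta^{(\ell)}_t-\tfrac12(\theta^{(\ell)}_t)^{-1}\dot\theta^{(\ell)}_t x$. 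Since $\nabla\log p_\ell(x,\Theta^{(\ell)}_t)=\eta^{(\ell)}_t+2\theta^{(\ell)}_t x$, the gradient term $\tfrac12 a(x)\nabla\log p(x,\Theta^{(1)}_t,\hdots,\Theta^{(k)}_t)$ equals $a(x)\big(\sum_\ell\nu_\ell p_\ell(x,\Theta^{(\ell)}_t)(\tfrac12\eta^{(\ell)}_t+\theta^{(\ell)}_t x)\big)/p(x,\Theta^{(1)}_t,\hdots,\Theta^{(k)}_t)$; combining this with $\tfrac12\diverg{a(x)}$ and with $-\big(\sum_\ell\nu_\ell p_\ell\inprod{\dot\Theta^{(\ell)}_t}{\mathcal{I}_i^{(\ell)}}\big)/p=\big(\sum_\ell\nu_\ell p_\ell\,(-g^{(\ell)})\big)/p$ reproduces exactly the formula in Proposition~\ref{prop:drift:fct}. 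I expect the main obstacle to be the divergence-splitting step — establishing that an affine $g^{(\ell)}$ with $\diverg{g^{(\ell)}p_\ell}=\partial_t p_\ell$ exists, which is what makes the $\mathcal{I}_i^{(\ell)}$ explicit and hinges on $\partial_t\log p_\ell$ being a polynomial of degree at most two together with symmetry of $\dot S^{(\ell)}_t$; the remaining manipulations are routine linear algebra.
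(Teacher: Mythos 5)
Your proof is correct, but it reaches the formula by a genuinely different route than the paper. The paper's proof (Appendix C) first \emph{guesses} explicit Hilbert-space-valued functions $\varphi_{1,i}^{(\ell)},\varphi_{2,i}^{(\ell)}$ built from the matrices $\Ei$ (Lemma~\ref{lemma:choose_varphi}), verifies the constraint \eqref{eq:cond_varphi_thm_multi_brigo} by trace manipulations, and then evaluates the integrals $\mathcal{I}_{1,i}^{(\ell)},\mathcal{I}_{2,i}^{(\ell)}$ explicitly via substitution and integration by parts (Lemma~\ref{lemma:integrals_multi_gauss}). You instead observe that both the constraint and the drift formula only see the scalar contractions $\Phi_i^{(\ell)}=\inprod{\dot\Theta^{(\ell)}_t}{\varphi_i^{(\ell)}}$, recast the constraint as the continuity equation $\diverg{g^{(\ell)}p_\ell}=\partial_t p_\ell$, and exploit the fact that for a Gaussian $p_\ell$ the quantity $\partial_t\log p_\ell$ is a quadratic polynomial, so an \emph{affine} velocity field $g^{(\ell)}$ solves it; the integrals then collapse by the fundamental theorem of calculus rather than by computation. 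This buys a conceptual explanation of \emph{why} a closed form exists (and of the non-uniqueness: any $A_0$ with $A_0\transp S^{-1}$ antisymmetric could be added to $A^{(\ell)}$), at the cost of the paper's concreteness. I checked your coefficient matching and the conversion to natural parameters $(\eta,\theta)$; both reproduce the stated drift exactly, including the mixture weighting and the $\tfrac12 a(x)\nabla p/p$ term. The only formal wrinkle is that Theorem~\ref{thm:brigo_mixture_multi} asks for $\varphi_i^{(\ell)}$ as maps $\R^n\times\Hi\to\Hi$ not depending on $\dot\Theta^{(\ell)}_t$, whereas realizing a prescribed $\Phi_i^{(\ell)}$ by, say, $\varphi_i^{(\ell)}=\Phi_i^{(\ell)}\dot\Theta^{(\ell)}_t/\|\dot\Theta^{(\ell)}_t\|^2$ does; since the theorem's proof uses $\varphi_i^{(\ell)}$ only through $\Phi_i^{(\ell)}$ this is immaterial, but a sentence acknowledging it (or a pointer to the paper's explicit $\dot\Theta$-free realization) would make the argument airtight.
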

	  The proof is provided in Appendix~\ref{app:proof_drift_fct}.

	  \begin{remark} \rm{
 For the non-mixture setting the drift term simplifies to
	    \begin{equation*}
	      u(x,t) = \frac{1}{2}\diverg{a(x)} + \frac{1}{4}\theta_{t}^{-1}\dot{\theta}_{t} \theta_{t}^{-1}\eta_{t} - \frac{1}{2}\theta_{t}^{-1}\dot{\eta}_{t} - \frac{1}{2}\theta_{t}^{-1}\dot{\theta}_{t} x + a(x) \left( \frac{1}{2}\eta_{t} + \theta_{t} x \right), 
	    \end{equation*}
	    that in the special case of a constant diffusion term is a linear function, as one would expect. }   
	  \end{remark}  
	   We introduce the following ansatz for the drift function
	   \begin{equation} \label{eq:ansatz_multi_mix}
	   \begin{aligned}
	   u(x,t) \!&=\! \frac{1}{2}\diverg{a(x)}\! +\! \frac{\sum_{\ell=1}^k\! \nu_\ell p_\ell(x,\Theta^{(\ell)}_{t})\!  \left( A^{(\ell)}_{t} \!+\! B^{(\ell)}_{t} x \! +\! a(x)\! \left(C^{(\ell)}_{t}\! +\! D^{(\ell)}_{t} x\! \right)\! \right)}{p(x,\Theta^{(1)}_t,\hdots,\Theta^{(k)}_t)},
	   \end{aligned}
	  \end{equation}
	   where $B^{(\ell)}_{t},D^{(\ell)}_{t}\in\R^{n\times n}$ and $A^{(\ell)}_{t},C^{(\ell)}_{t}\in\R^n$ for all $\ell =1,\hdots k$. The coefficients $A^{(\ell)}_{t}$, $B^{(\ell)}_{t}$, $C^{(\ell)}_{t}$ and $D^{(\ell)}_{t}$ cannot be chosen arbitrarily. They are coupled according to Proposition~\ref{prop:drift:fct}.  By comparing the coefficients of Proposition~\ref{prop:drift:fct} and \eqref{eq:ansatz_multi_mix} one gets
	  \begin{equation*} 
	   A^{(\ell)}_{t} \!= \! \frac{1}{4}{\theta^{(\ell)}_{t}}^{-1}\!\dot{\theta}^{(\ell)}_{t} {\theta^{(\ell)}_{t}}^{-1}\!\eta^{(\ell)}_{t} - \frac{1}{2}{\theta^{(\ell)}_{t}}^{-1}\dot{\eta}_{t}^{(\ell)} {}, \
	   B^{(\ell)}_{t} \!= \! -\frac{1}{2}{\theta^{(\ell)}_{t}}^{-1}\dot{\theta}^{(\ell)}_{t}, \
	   C^{(\ell)}_{t} \!= \! \frac{1}{2}\eta^{(\ell)}_{t}, \
	   D^{(\ell)}_{t} \!= \! \theta^{(\ell)}_{t}\!.
	  \end{equation*}
	   Hence, one directly sees that the four parameters $A^{(\ell)}_{t}$, $B^{(\ell)}_{t}$, $C^{(\ell)}_{t}$ and $D^{(\ell)}_{t}$ for all $\ell=1,\hdots,k$ are coupled via the two ODEs
	  \begin{equation}
	   \frac{\drv C^{(\ell)}_{t}}{\drv t}= -D^{(\ell)}_{t}A^{(\ell)}_{t} - {B^{(\ell)}_{t}}\transp C^{(\ell)}_{t},  \qquad  \frac{\drv D^{(\ell)}_{t}}{\drv t}= -2D^{(\ell)}_{t}B^{(\ell)}_{t}. \label{eq:ode_for_C}\\
	  \end{equation}
	  Note that the parametrization introduced in \eqref{eq:ansatz_multi_mix} provides relatively simple expression for the mean and variance of the variational approximation derived in the next section (Section~\ref{sec:eq_mean_var}). In the authors' opinion this parametrization therefore helps to keep the notation simple.

	 \subsection{Equations for mean and variance} \label{sec:eq_mean_var}
	 Theorem~\ref{thm:brigo_mixture_multi} provides an explicit formula for the drift term in the approximating SDE~\eqref{eq:approx_SDE}, that simplifies to \eqref{eq:ansatz_multi_mix} in the case of multi-normal marginal densities. Therefore, the mean and variance of the approximating SDE~\eqref{eq:approx_SDE} are characterized via the following two ODEs.

	  \begin{theorem} \label{thm:mean_variance}
	   Consider the SDE~\eqref{eq:approx_SDE} with drift term $u$ given by \eqref{eq:ansatz_multi_mix}, such that the solution $X_{t}$ has a  marginal density $p(x,\Theta^{(1)}_t,\hdots,\Theta^{(k)}_t) \in \text{EM}(c_1,\hdots,c_k)$ that is an arbitrary convex combination of densities $p_\ell(x,\Theta^{(\ell)}_{t})\in \text{EM}(c_\ell)$ for $\ell =1,\hdots, k$.
Let $m^{(\ell)}_{t}$ and $S^{(\ell)}_{t}$ denote the mean and variance of $X_{t}$ with respect to $p_\ell(x,\Theta^{(\ell)}_{t})$. Then,
	   \begin{equation} \label{e:thm:mean}
	    \begin{aligned}
	     \frac{\drv m^{(\ell)}_{t}}{\drv t} \!\!&=\! \frac{1}{2}\CExpec{\diverg{a(X)}\!\!}{p_\ell} \!+\! A^{(\ell)}_{t}\! \!+\! B^{(\ell)}_{t} m^{(\ell)}_{t} \!\!+\! \CExpec{a(X)}{p_\ell}\!C^{(\ell)}_{t}\!\! +\! \CExpec{a(X)D^{(\ell)}_{t} X}{p_\ell}
	    \end{aligned}
	   \end{equation}
	   and
	   \begin{align} 
	     \frac{\drv S^{(\ell)}_{t}}{\drv t} &= \frac{1}{2}\CExpec{X\diverg{a(X)}\transp}{p_\ell}+ \frac{1}{2}\CExpec{\diverg{a(X)} X\transp}{p_\ell} - \frac{1}{2}m^{(\ell)}_{t} \CExpec{\diverg{a(X)}}{p_\ell}\transp \nonumber\\ 
	     &\hspace{8mm}- \frac{1}{2}\CExpec{\diverg{a(X)}}{p_\ell}{m^{(\ell)}_{t}}\transp + \CExpec{a(X)}{p_\ell} + S^{(\ell)}_{t} {B^{(\ell)}_{t}}\transp + B^{(\ell)}_{t} S^{(\ell)}_{t} \nonumber \\
	     &\hspace{8mm}+ \CExpec{X{C^{(\ell)}_{t}}\transp a(X)}{p_\ell}+ \CExpec{a(X)C^{(\ell)}_{t} X\transp}{p_\ell} - m^{(\ell)}_{t} {C^{(\ell)}_{t}}\transp \CExpec{a(X)}{p_\ell} \label{e:thm:var}\\
	     &\hspace{8mm}- \CExpec{a(X)}{p_\ell}C^{(\ell)}_{t} {m^{(\ell)}_{t}}\transp+\CExpec{XX\transp D^{(\ell)}_{t} a(X)}{p_\ell} + \CExpec{a(X) D^{(\ell)}_{t} XX\transp}{p_\ell} \nonumber \\ 
	     &\hspace{8mm}- m^{(\ell)}_{t} \CExpec{X\transp D^{(\ell)}_{t} a(X)}{p_\ell} -\CExpec{a(X)D^{(\ell)}_{t} X}{p_\ell}{m^{(\ell)}_{t}}\transp.  \nonumber
	   \end{align}  
	  \end{theorem}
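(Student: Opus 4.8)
The plan is to reduce the theorem to the elementary first- and second-moment identities for a diffusion, after observing that each mixture component $p_\ell(\cdot,\Theta^{(\ell)}_t)$ --- although \emph{not} the marginal law of the full approximating SDE~\eqref{eq:approx_SDE} --- is itself the marginal density of an auxiliary diffusion. To this end set
\[
\widetilde{u}^{(\ell)}(x,t):=\frac{1}{2}\diverg{a(x)}+A^{(\ell)}_t+B^{(\ell)}_t x+a(x)\bigl(C^{(\ell)}_t+D^{(\ell)}_t x\bigr),\qquad \ell=1,\dots,k .
\]
Multiplying the ansatz~\eqref{eq:ansatz_multi_mix} by $p(x,\Theta^{(1)}_t,\dots,\Theta^{(k)}_t)$ and using $\frac{1}{2}\diverg{a(x)}\,p=\sum_\ell\nu_\ell\,\frac{1}{2}\diverg{a(x)}\,p_\ell$ gives the identity $u(x,t)\,p(x,\Theta^{(1)}_t,\dots,\Theta^{(k)}_t)=\sum_{\ell=1}^k\nu_\ell\,p_\ell(x,\Theta^{(\ell)}_t)\,\widetilde{u}^{(\ell)}(x,t)$, so the mixture drift is the $p$-weighted average of the $\widetilde{u}^{(\ell)}$. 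By the construction underlying Theorem~\ref{thm:brigo_mixture_multi} --- equivalently, by its non-mixture case applied to the single family $\text{EM}(c^{(\ell)})$ along the curve $t\mapsto\Theta^{(\ell)}_t$ --- together with the coefficient identification of Proposition~\ref{prop:drift:fct} (note in particular $C^{(\ell)}_t=\frac{1}{2}\eta^{(\ell)}_t$, $D^{(\ell)}_t=\theta^{(\ell)}_t$, i.e.\ $a(x)(C^{(\ell)}_t+D^{(\ell)}_t x)=\frac{1}{2} a(x)\nabla_x\log p_\ell(x,\Theta^{(\ell)}_t)$, which is exactly what makes $\widetilde{u}^{(\ell)}$ an admissible drift), each $p_\ell(\cdot,\Theta^{(\ell)}_t)$ solves the Kolmogorov forward equation
\[
\partial_t p_\ell(x,\Theta^{(\ell)}_t)=-\sum_i\partial_{x_i}\!\bigl[\widetilde{u}^{(\ell)}_i(x,t)\,p_\ell(x,\Theta^{(\ell)}_t)\bigr]+\frac{1}{2}\sum_{i,j}\partial_{x_i}\partial_{x_j}\!\bigl[a_{ij}(x)\,p_\ell(x,\Theta^{(\ell)}_t)\bigr]
\]
with the same diffusion coefficient $\sigma$ as in \eqref{eq:approx_SDE}. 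Establishing this component-wise forward equation is the only genuinely non-routine step.

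Granting it, I would obtain the mean ODE by testing the component forward equation against the coordinate maps $x\mapsto x_k$: one integration by parts turns the drift term into $\CExpec{\widetilde{u}^{(\ell)}(X,t)}{p_\ell}$, while integrating by parts twice annihilates the diffusion term because the Hessian of a coordinate function vanishes. Hence $\drv m^{(\ell)}_t/\drv t=\CExpec{\widetilde{u}^{(\ell)}(X,t)}{p_\ell}$, and expanding $\widetilde{u}^{(\ell)}$ and pulling the deterministic $A^{(\ell)}_t,B^{(\ell)}_t,C^{(\ell)}_t,D^{(\ell)}_t$ outside the expectation reproduces \eqref{e:thm:mean}. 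The interchange of $\drv/\drv t$ with $\int$ and the finiteness of every expectation that appears are justified by the Gaussianity of $p_\ell$ (so all moments are finite, as also for $p_0\in\text{EM}(c^{(1)},\dots,c^{(k)})$ by Assumption~\ref{ass:brigo:thm}) together with the at most polynomial growth of $a$ and $\diverg{a}$.

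For the variance I would test the same forward equation against $x\mapsto x_k x_l$. The drift term then contributes $\CExpec{\widetilde{u}^{(\ell)}(X,t)\,X\transp+X\,\widetilde{u}^{(\ell)}(X,t)\transp}{p_\ell}$, and the diffusion term, after integrating by parts twice, contributes $\CExpec{a(X)}{p_\ell}$, yielding an ODE for $\CExpec{XX\transp}{p_\ell}$. Writing $S^{(\ell)}_t=\CExpec{XX\transp}{p_\ell}-m^{(\ell)}_t{m^{(\ell)}_t}\transp$ and subtracting $\frac{\drv m^{(\ell)}_t}{\drv t}{m^{(\ell)}_t}\transp+m^{(\ell)}_t\frac{\drv m^{(\ell)}_t}{\drv t}\transp$ via the mean ODE just derived turns it into
\[
\frac{\drv S^{(\ell)}_t}{\drv t}=\operatorname{Cov}_{p_\ell}\!\bigl(\widetilde{u}^{(\ell)}(X,t),X\bigr)+\operatorname{Cov}_{p_\ell}\!\bigl(\widetilde{u}^{(\ell)}(X,t),X\bigr)\transp+\CExpec{a(X)}{p_\ell},
\]
where $\operatorname{Cov}_{p_\ell}(U,V):=\CExpec{UV\transp}{p_\ell}-\CExpec{U}{p_\ell}\CExpec{V}{p_\ell}\transp$. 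Substituting the four summands of $\widetilde{u}^{(\ell)}$ one at a time --- the constant $A^{(\ell)}_t$ drops out of the covariance, the linear part $B^{(\ell)}_t x$ produces $B^{(\ell)}_t S^{(\ell)}_t+S^{(\ell)}_t{B^{(\ell)}_t}\transp$, and the $\frac{1}{2}\diverg{a(x)}$ and $a(x)(C^{(\ell)}_t+D^{(\ell)}_t x)$ pieces produce the remaining (generically non-vanishing) $a$-weighted terms --- and collecting them gives exactly \eqref{e:thm:var}.

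The hard part is thus the first paragraph, i.e.\ the decoupling of the dynamics into the component Kolmogorov forward equations with drifts $\widetilde{u}^{(\ell)}$, which is carried by Theorem~\ref{thm:brigo_mixture_multi} and Proposition~\ref{prop:drift:fct}. Everything after that is the standard recipe ``rate of change of the mean $=$ expected drift; rate of change of the covariance $=$ drift--state covariance plus its transpose plus expected diffusion matrix'', and the only remaining work is the bookkeeping of distributing an expectation over the four pieces of $\widetilde{u}^{(\ell)}$, which is what makes \eqref{e:thm:var} look long.
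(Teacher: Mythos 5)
Your proof is correct, but it reaches the theorem by a genuinely different route from the paper's. The paper works at the level of the full mixture SDE: it first records the standard moment identities $\drv m_{t}=\Expec{u(X_{t},t)}\drv t$ and the analogous covariance identity for \eqref{eq:approx_SDE} itself (Lemma~\ref{lemma:eq-for-variance}), then relates the mixture moments $m_{t},S_{t}$ to the component moments $m^{(\ell)}_{t},S^{(\ell)}_{t}$ (Lemma~\ref{lemma:multi_mix:m,S}), and finally extracts the component ODEs from the resulting aggregated identity $\sum_{\ell}\nu_{\ell}\,\drv m^{(\ell)}_{t}/\drv t=\sum_{\ell}\nu_{\ell}(\cdots)$ by observing that it must hold for every weight vector $\nu\in\Delta_{k}$. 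You instead decouple at the outset: under the coefficient coupling of Proposition~\ref{prop:drift:fct}, each component $p_{\ell}(\cdot,\Theta^{(\ell)}_{t})$ is itself the marginal of an auxiliary diffusion with drift $\widetilde{u}^{(\ell)}$ and the same $\sigma$ --- this is precisely the non-mixture case of Theorem~\ref{thm:brigo_mixture_multi}, and is also visible term by term in the two Claims of its proof --- and you then run the same moment recipe component by component; your expansion of $\operatorname{Cov}_{p_\ell}(\widetilde{u}^{(\ell)}(X,t),X)$ plus its transpose plus $\CExpec{a(X)}{p_\ell}$ does reproduce \eqref{e:thm:var} exactly. Your route delivers the ODEs for each fixed $\ell$ without varying $\nu$, which is arguably cleaner in light of the paper's own remark that for $\nu$ fixed a priori the aggregated identity makes the ODEs only sufficient; the price is that you must invoke the component forward equations and the identification $C^{(\ell)}_{t}=\tfrac{1}{2}\eta^{(\ell)}_{t}$, $D^{(\ell)}_{t}=\theta^{(\ell)}_{t}$ explicitly, whereas the paper stays entirely with the one SDE actually being approximated and needs only its mixture marginal, at the cost of the ``holds for all $\nu$'' step. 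Both arguments are sound.
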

	  The proof is provided in Appendix~\ref{app:proof_thm_mS}.
	  Note that given $m^{(\ell)}_{t}$ and $S^{(\ell)}_{t}$ the mean and variance of $X_{t}$ can be expressed as $m_{t} = \sum_{\ell=1}^k \nu_\ell m^{(\ell)}_{t}$ and $S_{t} = \sum_{\ell=1}^k \nu_\ell S^{(\ell)}_{t} + \sum_{\ell=1}^k\nu_\ell m^{(\ell)}_{t} {m^{(\ell)}_{t}} \transp - ( \sum_{\ell =1}^k \nu_\ell  m^{(\ell)}_{t} )( \sum_{\ell =1}^k \nu_\ell  m^{(\ell)}_{t} )\transp$, respectively.	 
	 
	  \begin{remark} \rm{
	  If the coefficients $\nu_{i}$  in the convex combination of the marginal density \\ $p(x,\Theta^{(1)}_t,\hdots,\Theta^{(k)}_t)$ in Theorem~\ref{thm:mean_variance} are fixed a priori, the ODEs \eqref{e:thm:mean} and \eqref{e:thm:var} are only sufficient for describing $m^{(\ell)}_{t}$ and $S^{(\ell)}_{t}$. Oftentimes, however, one is interested in choosing those coefficients a posteriori, for example by solving an auxiliary optimization problem. In such a setting the ODEs given by Theorem~\ref{thm:mean_variance} are necessary and sufficient.}  
	  \end{remark}

We have studied how to reformulate the constraints \eqref{eq:problem:i} and \eqref{eq:problem:ii} of Problem~\ref{prob:optimization:problem} by deriving an expression for the drift term to the approximating SDE \eqref{eq:approx_SDE}. In the case that the marginals in \eqref{eq:problem:ii} are restricted to a mixture of multivariate normal densities this reformulation reduces to the ODEs \eqref{eq:ode_for_C}, \eqref{e:thm:mean} and \eqref{e:thm:var}.

\subsection{Example: Geometric Brownian Motion} \label{sec:GBM:example:2}
We continue the geometric Brownian motion example started in Section~\ref{sec:GBM:example:1}. The goal is to approximate the smoothing density by a normal density. Therefore, according to Proposition~\ref{prop:drift:fct}, the drift function for the approximating SDE~\eqref{eq:approx_SDE} has to be chosen as
	\begin{equation} \label{eq:GBM:drift}
	u(x,t) = A_{t} + (\lambda^2 + B_{t})x + \lambda^2 x^{2}( C_{t} +D_{t}x),
	\end{equation}
where the coefficients $A_t, B_t, C_t, D_t$ are coupled via the two ODEs \eqref{eq:ode_for_C}. This choice of drift function leads to ODEs for the mean and the variance of the posterior process, according to Theorem~\ref{thm:mean_variance}
\begin{align}
\frac{\drv m_t}{\drv t} &= \lambda^2 m_t + A_t + B_t m_t +  \lambda^2C_t(m_t^2 + S_t) + \lambda^2 D_t(m_t^3 + 3m_t S_t) \label{ODE:mean:GBM}\\
\frac{\drv S_t}{\drv t} &= \lambda^2(m_t^2 + 3 S_t) + 2B_t S_t + 4\lambda^2 C_t m_t S_t + 6\lambda^2 D_t(m_t^2 S_t + S_t^2). \label{ODE:var:GBM}
\end{align}

 \section{Optimal control problem formulation} \label{sec:optimal:control:problem:formulation}
	 In this section, we show that the optimization problem~\ref{prob:optimization:problem}, using the results derived from Theorem~\ref{thm:brigo_mixture_multi}, can be reformulated as a standard optimal control problem (OCP), which conceptually is similar to \cite{ref:Mitter-03}\footnote{Note that \cite{ref:Mitter-03} addresses a related problem, whose main difference, when compared to the presented method, is that the variational characterization considered there is exact.}. Therefore, the presented variational approximation method to the path estimation problem for SDEs can be expressed as an OCP and as such leads to a standard formulation of necessary global optimality conditions in terms of Pontryagin's maximum principle.
	Consider the vector spaces $\hat{\mathcal{V}}:=\R^n\times\R^{n\times n}$, $\hat{\mathcal{Z}}:=\R^n \times \sym(n,\R)\times \R^n \times \sym(n,\R)$
	and define the trajectories
	\begin{align*}
	[0,T]\ni t \mapsto v^{(\ell)}(t) &:= (A^{(\ell)}_{t},B^{(\ell)}_{t})\in\hat{\mathcal{V}} \\
	[0,T]\ni t \mapsto z^{(\ell)}(t) &:= (m^{(\ell)}_{t}, S^{(\ell)}_{t}, C^{(\ell)}_{t},D^{(\ell)}_{t})\in\hat{\mathcal{Z}},
	\end{align*}
     for $\ell=1,\hdots,k$. We introduce the state variable $z(t):=\left(z^{(1)}(t),\hdots,z^{(k)}(t)\right)\in\prod_{\ell=1}^k \hat{\mathcal{Z}}=:\mathcal{Z}$ and the control variable $v(t):=\left(v^{(1)}(t),\hdots,v^{(k)}(t)\right)\in\prod_{\ell=1}^k \hat{\mathcal{V}}=:\mathcal{V}$ for $t\in[0,T]$.   
	As a first step, in view of the cost functional \eqref{eq:obj_fct} of Problem~\ref{prob:optimization:problem}, the so-called Lagrangian 
	\begin{equation} \label{eq:Lagrangian}
	\begin{aligned}
	&\mathbb{E}_{Q}\left[\frac{1}{2}\norm{u(X_{t},t)-f(X_{t})}_{a(X_{t})}^2  \right.\\ 
	&\hspace{10mm} \left. + \ y_{t}\left( u(X_{t},t)\transp \nabla h(X_{t}) + \frac{1}{2}\sigma(X_{t})\transp \nabla^{2} h(X_{t})\sigma(X_{t}) \right) +  \frac{1}{2} \| h(X_{t}) \|^{2} \right]
	\end{aligned}
	\end{equation}
	is expressed as a function of only $z(t),v(t)$ and $t$. This step, while being exact in some cases, may require an approximation. In the case that the marginals of $Q$ are mixtures of normal densities, the expectation of any polynomial in $X_{t}$ can be expressed as a function of its mean and variance. If the diffusion term $\sigma$ is a polynomial, and no mixture is considered $(k=1)$, the drift function $u$, according to \eqref{eq:ansatz_multi_mix}, is a polynomial. We refer to Section~\ref{sec:examples} to see how the Lagrangian can be derived for two concrete examples.
Consider a Lagrangian 	
	\begin{align*}
	 L : [0,T]\times\mathcal{Z} \times \mathcal{V} \to  \R, \quad L(t,z(t),v(t)) \approx  \eqref{eq:Lagrangian},
	\end{align*}
	where $\approx$ indicates that in order to express the term \eqref{eq:Lagrangian} by the state and control variables only, an approximation might be needed, as explained above.
	Similarly to the Lagrangian, in view of the cost functional \eqref{eq:obj_fct}, we introduce a terminal cost $F:\mathcal{Z}\to \R$ by
	\begin{align*}
	 F(z(T)) \approx - y_{T}\CExpec{h(X_{T})}{Q}. 
	\end{align*}			
	Under the assumption that the drift term $\sigma$ is a polynomial, the ODEs derived in the previous section can be expressed in standard form. We define the function $H : \mathcal{Z} \times  \mathcal{V}\to \mathcal{Z}$ by
	\begin{align*}
	 H(z(t),v(t)) \! =& \! \left(\!H_1^{(1)}\!(z(t),v(t)),\hdots,H_4^{(1)}\!(z(t),v(t)), \hdots, H_1^{(k)}\!(z(t),v(t)),\hdots,H_4^{(k)}\!(z(t),v(t))\!\right)\!,
	\end{align*}
	where
	\begin{align*}
	 \frac{ \drv m^{(\ell)}_{t}}{\drv t} &= \frac{\drv z_1^{(\ell)}}{\drv t}(t) = H_1^{(\ell)}\left(z(t),v(t) \right), \quad && \frac{ \drv C^{(\ell)}_{t}}{\drv t} = \frac{\drv z_3^{(\ell)}}{\drv t}(t) = H_3^{(\ell)}\left(z(t),v(t)\right), \\
	 \frac{ \drv S^{(\ell)}_{t}}{\drv t} &= \frac{\drv z_2^{(\ell)}}{\drv t}(t) = H_2^{(\ell)}\left(z(t),v(t)\right), \quad && \frac{ \drv D^{(\ell)}_{t}}{\drv t} = \frac{\drv z_4^{(\ell)}}{\drv t}(t) = H_4^{(\ell)}\left(z(t),v(t)\right), 
	\end{align*}
	for $\ell=1,\hdots,k$ are given by \eqref{e:thm:mean}, \eqref{e:thm:var} and \eqref{eq:ode_for_C}. Thus, we have shown so far in this article that Problem~\ref{prob:optimization:problem} can be reformulated as the following optimal control problem
	 \begin{equation} \label{eq:optimal:control:problem} \left\{
	 \setlength\arraycolsep{3pt}
	 \begin{array}{cllll}
	 \underset{v\in\mathcal{M}([0,T],\mathcal{V})}{\text{minimize}}& & J(v) &=  &\int_{0}^{T} L(t,z(t),v(t)) \drv t + F(z(T)) \\
	 \text{subject to} \hspace{10mm}& &\dot{z}(t) &= &H(z(t),v(t)),  \quad t\in[0,T] \text{ a.e.}\\
	 									 &&  z(0) &= &z_{0},
	\end{array} \right.
	\end{equation}
where $\mathcal{M}([0,T],\mathcal{V})$ denotes the space of measurable functions from $[0,T]$ to $\mathcal{V}$. It remains to discuss how to find the initial condition $z_0$ in the OCP~\eqref{eq:optimal:control:problem}. A straightforward, however, clearly not efficient, method for that is solving the Pardoux equation~\eqref{eq:backward:SPDE:v}, which according to \eqref{eq:smoothing:density:char} provides the smoothing density at initial time as $\Ps(x,0)=\frac{p_0(x)w(x,0)}{\int_{\R^{n}}p_0(x)w(x,0)\drv x}$, from where $z_0$ can be derived.

	\subsection{Maximum principle} \label{sec:Pontryagin}
We derive necessary conditions for global optimality of the optimization problem \eqref{eq:optimal:control:problem} that are provided by the Pontryagin maximum principle (PMP). Since the control set $\mathcal{V}$ is unbounded, we need an extended setting of the standard PMP, see \cite[Section~22.4]{ref:Clarke-13} for a comprehensive survey.
It requires some further assumptions.
 
\begin{assumption}\label{ass:PMP} \em{
Let the process $(z^{\star}(t),v^{\star}(t))_{t\in[0,T]}$ be a local minimizer for the OCP \eqref{eq:optimal:control:problem}, that satisfies
\begin{enumerate}[(i)]
\item The function $F$ is continuously differentiable; \label{ass:PMP:i}
\item The functions $H$ and $L$ are continuous and admit derivatives relative to $z$ which are themselves continuous in all variables $(t,z,v)$; \label{ass:PMP:ii}
\item \label{ass:PMP:iii} There exist $\varepsilon >0$, a constant $c$, and a summable function $d$ such that for almost every $t\in[0,T]$, we have
\begin{equation*}
|z-z^{\star}(t)|\leq \varepsilon \Rightarrow |\nabla_{z} (H,L)(t,z,v^{\star}(t))| \leq c |(H,L)(t,z,v^{\star}(t))| + d(t).
\end{equation*}
\end{enumerate} }  
\end{assumption}
\noindent Note that Assumption~\ref{ass:PMP}\eqref{ass:PMP:iii} is implied if 
\begin{equation*}
|\nabla_{z} H(t,z,v)| + |\nabla_{z} L(t,z,v)| \leq c \left( |H(t,z,v)| + |L(t,z,v)| \right) + d(t)
\end{equation*}
holds for all $v\in\mathcal{V}$ when $z$ is restricted to a bounded set, which is satisfied by many systems. Moreover, the condition automatically holds if $v^{\star}$ happens to be bounded.
 
\begin{lemma}[PMP {\cite[Theorem~22.2]{ref:Clarke-13}}] \label{lem:maximum:principle}
Given Assumption~\ref{ass:PMP}, let the process $(z^{\star}(t),v^{\star}(t))_{t\in[0,T]}$ be a local minimizer for the problem \eqref{eq:optimal:control:problem}. Then there exists an absolutely continuous function $p:[0,T]\to   \mathcal{Z}$ satisfying
\begin{enumerate}
\item the adjoint equation $\dot{p}(t)=-\nabla_z\inprod{p(t)}{H(z^{\star}(t),v^{\star}(t))} - \nabla_z L(t,z^{\star}(t),v^{\star}(t))$ for almost every $t\in[0,T];$ 
\item the transversality condition $p(T) = \nabla_{z}F(z(T));$
\item \label{item:maximum:condition} the maximum condition \\$ \inprod{p(t)}{H(z^{\star}(t),v^{\star}(t))} +  L(t,z^{\star}(t),v^{\star}(t)) = \underset{v\in\mathcal{V}}{\inf} \inprod{p(t)}{H(z^{\star}(t),v)} +  L(t,z^{\star}(t),v)$ for almost every $t\in[0,T]$.
\end{enumerate}  
\end{lemma}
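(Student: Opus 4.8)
\textit{Proof plan.} The statement is the extended Pontryagin maximum principle for optimal control problems with an unbounded control set, and it is obtained as a direct specialization of \cite[Theorem~22.2]{ref:Clarke-13}; essentially all the analytic content is imported from there, and the task is to check that the data of \eqref{eq:optimal:control:problem} together with Assumption~\ref{ass:PMP} match the hypotheses of that theorem and then to transcribe its conclusion in the present notation. First I would cast \eqref{eq:optimal:control:problem} into the Bolza form treated in \cite{ref:Clarke-13}: an absolutely continuous state arc $z$ on $[0,T]$ governed by $\dot z(t)=H(z(t),v(t))$ with a measurable control $v(t)\in\mathcal{V}$ (the set $\mathcal{V}$ being unbounded, which is precisely why the \emph{extended} form of the principle is needed), a fixed left endpoint $z(0)=z_{0}$, a \emph{free} right endpoint, and cost $\int_{0}^{T}L(t,z,v)\,\drv t+F(z(T))$. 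Since \eqref{eq:optimal:control:problem} carries neither pointwise state constraints nor a right-endpoint constraint, no state-constraint measures and no extra endpoint multipliers will enter.

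Second, I would verify the regularity hypotheses of \cite[Theorem~22.2]{ref:Clarke-13} against Assumption~\ref{ass:PMP}: part~\eqref{ass:PMP:i} supplies $F\in C^{1}$; part~\eqref{ass:PMP:ii} supplies joint continuity of $H$ and $L$ in $(t,z,v)$ and continuity of the partial gradients $\nabla_{z}H$ and $\nabla_{z}L$; and part~\eqref{ass:PMP:iii} is precisely the Tonelli--Morrey-type growth bound on $|\nabla_{z}(H,L)|$, localized to the tube $|z-z^{\star}(t)|\le\varepsilon$, that \cite{ref:Clarke-13} imposes to control the adjoint dynamics when $\mathcal{V}$ is unbounded (and, as observed right after Assumption~\ref{ass:PMP}, this last requirement follows from a global linear-growth estimate on bounded $z$-sets and is automatic when $v^{\star}$ is essentially bounded). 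With these checks in place, the theorem applies to the given local minimizer $(z^{\star},v^{\star})$ and yields a cost multiplier $\lambda_{0}\in\{0,1\}$ and an absolutely continuous arc $p:[0,T]\to\mathcal{Z}$ with $(\lambda_{0},p)\neq 0$ such that, after relabelling the costate so that signs agree with the statement, the adjoint equation $\dot p(t)=-\nabla_{z}\inprod{p(t)}{H(z^{\star}(t),v^{\star}(t))}-\lambda_{0}\nabla_{z}L(t,z^{\star}(t),v^{\star}(t))$ holds for a.e.\ $t$, the transversality condition $p(T)=\lambda_{0}\nabla_{z}F(z(T))$ holds (this being the form forced by the free right endpoint), and $\inprod{p(t)}{H(z^{\star}(t),v)}+\lambda_{0}L(t,z^{\star}(t),v)$ attains its infimum over $v\in\mathcal{V}$ at $v=v^{\star}(t)$ for a.e.\ $t$.

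Third, I would establish \emph{normality}, i.e.\ $\lambda_{0}=1$, which is the one place where the fixed-initial/free-terminal structure of \eqref{eq:optimal:control:problem} is genuinely used. If $\lambda_{0}=0$, transversality forces $p(T)=0$, and the adjoint equation collapses to the linear homogeneous ODE $\dot p(t)=-\nabla_{z}\inprod{p(t)}{H(z^{\star}(t),v^{\star}(t))}$, whose coefficient $t\mapsto\nabla_{z}H(z^{\star}(t),v^{\star}(t))$ is integrable on $[0,T]$ by Assumption~\ref{ass:PMP}\eqref{ass:PMP:ii} together with \eqref{ass:PMP:iii}; uniqueness for linear ODEs (Gr\"onwall) then gives $p\equiv 0$, contradicting $(\lambda_{0},p)\neq 0$. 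Hence $\lambda_{0}=1$, and inserting $\lambda_{0}=1$ into the three relations above produces exactly items~1--3 of the lemma.

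The only genuinely delicate point — the one I would spell out most carefully — is the Hamiltonian/sign bookkeeping: \cite{ref:Clarke-13} phrases the result as a \emph{maximum} principle for the Hamiltonian $\inprod{p}{H}-\lambda_{0}L$, whereas the lemma is phrased with $\inprod{p}{H}+L$ and an infimum. The two are reconciled by the reflection $p\mapsto -p$ in Clarke's costate, which simultaneously flips the sign in the adjoint equation and in the transversality relation and converts the maximization over $v$ into the minimization written here. I do not anticipate any analytic obstacle beyond this: existence of a (local) minimizer is assumed rather than proved, and the whole nonsmooth, unbounded-control maximum principle is used off the shelf.
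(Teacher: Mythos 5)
Your proof is correct and takes the same route as the paper, which states this lemma purely as a citation of \cite[Theorem~22.2]{ref:Clarke-13} and provides no written proof of its own. Your additional care over matching the hypotheses, establishing normality of the cost multiplier (via the free right endpoint, the resulting homogeneous adjoint equation, and nontriviality), and reconciling the sign conventions is sound and simply makes explicit what the paper leaves implicit.
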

\begin{remark} \label{rem:PMP:cts} \rm{
\begin{enumerate}
\item Given that an optimal process $(z^{\star},v^{\star})$ exists\footnote{Existence of an optimal process can be assured by standard existence results, see for example \cite[Theorem~23.11]{ref:Clarke-13}.}, the maximum condition \ref{item:maximum:condition} can be used to derive a feedback law 
\begin{equation*}
v^{\star}(t)\in \arg \min_{v\in\mathcal{V}} \inprod{p(t)}{H(z^{\star}(t),v)} +  L(t,z^{\star}(t),v).
\end{equation*}
\item Lemma~\ref{lem:maximum:principle}, basically leads to a boundary value problem with initial conditions for the states and terminal conditions for the adjoint states, that provides necessary conditions for global optimality of Problem~\ref{prob:optimization:problem}.
\end{enumerate} }  
\end{remark}

We summarize the described method to approximate the smoothing density introduced so far. It basically consists of the following three steps, that provide a solution to Problem~\ref{prob:optimization:problem}:
\begin{enumerate}
\item[\textbf{Step 1}] Fix a mixture of exponential families of probability densities, e.g., the mixture of multivariate normal densities. Theorem~\ref{thm:brigo_mixture_multi}, that simplifies to Proposition~\ref{prop:drift:fct} for the multivariate normal densities, characterizes the approximate posterior SDE \eqref{eq:approx_SDE} whose solution admits marginal densities evolving in the chosen mixture of exponential families. \vspace{1mm}
\item[\textbf{Step 2}] Given the approximate posterior SDE \eqref{eq:approx_SDE}, we derive an optimal control formulation of Problem~\ref{prob:optimization:problem}. For the mixture of multivariate normal densities, this derivation is presented in Sections~\ref{sec:prescribed_marginal_law} and \ref{sec:optimal:control:problem:formulation} and finally leads to the OCP~\eqref{eq:optimal:control:problem}.\vspace{1mm}
\item[\textbf{Step 3}] Necessary conditions for optimality of the OCP~\eqref{eq:optimal:control:problem}, and hence for Problem~\ref{prob:optimization:problem}, can be derived from Pontryagin's maximum principle and result in a structured boundary value problem. 
\end{enumerate}

\begin{remark}
{\rm	It is important to note that the presented method chooses the best approximating SDE in a desired class using an objective distance measure between the corresponding probability distributions. One crucial advantage of this approach is that this distance could be quantified and numerically calculated (note that the first term in Lemma~\ref{Lemma:KL:cost} can be directly computed and the remaining two terms form the objective function of the optimal control problem considered), and hence the user gets an excellent estimate on the necessary approximating error. For instance, Figure~\ref{fig:GBM:KL} and Figure~\ref{fig:CIR:KL} demonstrated the accuracy of corresponding approximating SDEs by plotting  the relative entropies between the approximate models and the exact ones for the two examples considered in the paper.}
\end{remark}

\subsection{Computational complexity} \label{sec:computational:complexity}
If the initial condition to the OCP~\eqref{eq:optimal:control:problem} is known, the PMP, Lemma~\ref{lem:maximum:principle}, reduces to a boundary value problem, that can usually be solved numerically more efficiently than (S)PDEs by using numerical methods specifically tailored to these problems, such as the shooting method, see \cite{ref:Stoer-02}. Therefore, the major computational difficulty of the presented variational approach lies in estimating the initial condition to the OCP~\eqref{eq:optimal:control:problem}, for example via estimating the smoothing density at initial time. A straightforward, however clearly not efficient, method for that is solving the Pardoux equation~\eqref{eq:backward:SPDE:v}, as explained in Section~\ref{sec:model:setup}, which we used in the numerical examples in Section~\ref{sec:examples}. As such, whereas the standard PDE approach for computing a smoothing density requires solving a Zakai equation and the Pardoux equation~\eqref{eq:backward:SPDE:v}, the presented variational approach relies on only a Pardoux equation and the mentioned boundary value problem. This can usually be seen as a reduction in terms of computational effort required and is demonstrated by two numerical examples in Section~\ref{sec:examples}, Table~\ref{tab:runtime}.
Moreover, for future work, we aim to study the derivation of an estimator for the marginal smoothing density at terminal time without solving a Pardoux equation, that would then allow us to apply the proposed variational approximation method to high-dimensional problems, see Section~\ref{sec:conclusion} for more details.
Another idea to circumvent the estimation of this mentioned terminal condition is to use an alternative approach to the PMP, for characterizing a solution to the OCP~\eqref{eq:optimal:control:problem} that is briefly described in the following remark.
 
\begin{remark}[Semidefinite programming] \label{rem:SDP}\rm{
Solutions to the OCP~\eqref{eq:optimal:control:problem} can be characterized via the so-called weak formulation which consists of an infinite-dimensional linear program, see \cite[Chapter~10]{ref:Las-10} for details. Therefore, numerical approximation schemes to such infinite-dimensional linear programs, that have been studied in the literature, can be employed to solve Problem~\ref{prob:optimization:problem}. This approach seems particularly promising when the data of the OCP (dynamics and costs) are described by polynomials, as then the seminal Lasserre hierarchy based on solving a sequence of semidefinite programs, is applicable \cite{ref:Lasserre-01,ref:Las-10}.
}\end{remark}

\subsection{Example: Geometric Brownian Motion} \label{sec:GBM:example:3}
We continue the geometric Brownian motion example started in Sections~\ref{sec:GBM:example:1} and \ref{sec:GBM:example:2} and formulate the corresponding optimal control problem \eqref{eq:optimal:control:problem}. Recall that the state variable is defined as $z(t):=(m_t, S_t, C_t, D_t)$ and the control variable as $v(t):=(A_t, B_t)$. The ODEs for the state variables are given by \eqref{eq:ode_for_C}, \eqref{ODE:mean:GBM} and \eqref{ODE:var:GBM}. The objective function of the optimal control problem \eqref{eq:optimal:control:problem} can be expressed as $F(x(T)) = -y_Tm_T$ and
\begin{align*}
L(t,z(t),v(t)) &= \frac{A_t^2}{2\lambda^2(m_t^2+S_t)} + \frac{A_t(\lambda^2 + B_t - \kappa)}{\lambda^2 m_t} + \frac{(\lambda^2 + B_t - \kappa)^2}{2\lambda^2} + A_tC_t + y_t A_t \\
& \quad +m_t \left( C_t(\lambda^2 + B_t - \kappa) + A_t D_t + y_t(\lambda^2 + B_t) \right) \\
& \quad + (m_t^2 + S_t) \left( \frac{1}{2}\lambda^2 C_t^2 + D_t (\lambda^2 + B_t - \kappa)+\frac{1}{2} + \lambda^2 y_t C_t \right) \\
& \quad + (m_t^3 + 3m_tS_t) \left( \lambda^2C_tD_t + \lambda^2 y_t D_t \right) + (m_t^4+6m_t^2S_t + 3S_t^2)\frac{\lambda^2}{2}D_t^2, 
\end{align*}
where, in order to derive the cost function above, the first two inverse moments of $X_{t}$ with respect to $Q$ have been approximated. Due to the non-negativity of the GBM, we use the approximation $\CExpec{X_{t}^{-1}}{Q}\approx\CExpec{X_{t}}{Q}^{-1}\!=m_{t}^{-1}$ and $\CExpec{X_{t}^{-2}}{Q}\approx \CExpec{X_{t}}{Q}^{-2}\!=(S_{t}+m_{t}^{2})^{-1}$, whose accuracy has been investigated in \cite{ref:Garcia-01}.

\section{Parameter inference} \label{sec:parameter:inference} \
The goal of this section is to outline the use of the techniques, developed so far for path estimation, for inference of parameters in a hidden Markov model. We consider a class of dynamical systems
\begin{equation}\label{eq:system:unknown:paramters}
\drv X^\kappa_{t} = f(X^\kappa_{t},\kappa) \drv t + \sigma(X^\kappa_{t},\kappa) \drv W_{t}, \quad X^\kappa_{0}=x_{0}, \quad 0\leq t\leq T,
\end{equation}
parametrized by $\kappa$. The observation process can be modeled by \eqref{eq:measurement:model:cts}, but as discussed in the next section, the approach discussed below can also be used with necessary modifications for a discrete observation process.

%
 As a natural notation, for each parameter $\kappa$, the probability distribution of $X^\kappa_{[0,T]}$ on $\SC{C}$ will be denoted by $\priork$. Given a sample path $\{y_t: 0\leq t\leq T\}$ of the observation process $Y_{[0,T]}$, the objective is to select an optimal $\kappa^{\star}\in\R^{d}$ such that the observation process $(Y_{t})_{t\in[0,T]}$ in \eqref{eq:measurement:model:cts} has a high probability of reproducing the given data $y$. This is basically the inference scheme based on classical maximum likelihood estimation, and we propose an algorithm similar to the  
lines of  \emph{expectation maximization (EM) algorithm} (see \cite{ref:Cappe-05} for a comprehensive survey), which aims to obtain the optimal $\kappa^\star$ through multiple iterations.
 Recalling  \eqref{eq:information:term}, for each $\kappa$, we define
$I^{\kappa}(H_{T}(\cdot,y)) := -\log\left( \int \exp\left( - H_{T}(\cdot,y) \right)\drv \priork  \right)$. As already noted  in Section \ref{sec:motivation},  for each parameter $\kappa$, the term $I^{\kappa}(H_{T}(\cdot,y))$ provides the total information available through the sample path $y$, and  can be interpreted as the negative log-likelihood of $y$ given the parameter $\kappa$. 
However, minimizing this negative log-likelihood function, even if numerical evaluation of it can be done, usually is a hard problem. 
But, as mentioned in Section \ref{sec:motivation}, Lemma~\ref{Lemma:KL:cost} and non-negativity of the relative entropy together imply that an upper bound to this negative log-likelihood term is given by the apparent information,  $\mathcal{F}(Q,\kappa):=\KL{Q}{\priork} + \CExpec{H_{T}(\cdot,y)}{Q}$. The advantage of this observation is that this upper bound to the negative log-likelihood function is also the objective function in Problem~\ref{prob:optimization:problem}, for which the program for finding the minimizer $Q$ is by now well-established.  Therefore instead of minimizing the actual negative log-likelihood, we minimize an upper bound of it. The path to find the {\em right} parameter $\kappa$ corresponding to the sample path $y$ is now quite standard in statistics. After initialization of the parameter $\kappa$, we find the optimal $Q$ by solving the Problem \ref{prob:optimization:problem}, and then in the subsequent step, for this $Q$ we obtain the optimal parameter $\kappa$ by minimizing $\mathcal{F}(Q,\kappa)$. This yields an iterative EM-type algorithm whose details are given below. 

 \begin{table}[!htb]
\centering 
\begin{tabular}{c} 
\hspace{113mm} \text{ }\\
  \Xhline{3\arrayrulewidth} \vspace{-3mm}\\ 
\hspace{21.2mm}{\bf{\hypertarget{algo:1}{EM-type algorithm} }}  \hspace{34.2mm} \\ \vspace{-3mm} \\ \hline \vspace{-0.5mm}
\end{tabular} \\
\vspace{-1mm}
  \begin{tabular}{l l}
{\bf initialize} & $i=0,\ \kappa_{i}:=\hat{\kappa}_{0}$\\
{\bf while} 	& $i\leq M$ \\
{\bf Step 1: } & compute $Q_{i}$ by solving Problem~\ref{prob:optimization:problem} with parameter $\kappa_{i}$\\
{\bf Step 2: } & update parameter as $\kappa_{i+1} \in\arg\min\limits_{\kappa}\mathcal{F}(Q_{i},\kappa)$\\
{\bf Step 3: } & set $i\rightarrow i+1$
\vspace{-7mm}
  \end{tabular}
\begin{tabular}{c}
\hspace{-2mm} \phantom{ {\bf{Algorithm:}} Optimal Scheme for Smooth Optimization}\hspace{29.5mm} \\ \vspace{-1.0mm} \\\Xhline{3\arrayrulewidth}
\end{tabular}
\end{table}

\begin{remark}{\rm
Analyzing convergence of the above algorithm and consistency of the above corresponding estimator is the next important step and will be addressed in our future projects.}  
\end{remark}

We refer to Section~\ref{sec:examples} for a numerical visualization of this variational parameter inference method applied to two examples and to Section~\ref{sec:conclusion} for a discussion about convergence and consistency of the estimator as a topic of further research.

\section{Discrete time measurement model} \label{sec:discrete:measurements}
 In most practical examples, the measurements of physical quantities are processed by computers, and as such the data available are obtained only at discrete times, potentially restricted to a low number. The goal of this section is to outline how the discussed variational approximation scheme adapts naturally to such cases with obvious modifications.  

In this case the signal process \eqref{sec:one:eq1} is observed through noisy measured data $y :=\{y_{k}\}_{k=1}^{N}$ at discrete times $t_1\leq t_2\leq\hdots\leq t_N\leq T$. The canonical model for the observation process is thus given by 
	\begin{equation}\label{sec:one:eq:observation}
	 Y_k=h(X_{k},t_{k})+\rho_k,\quad \text{for }k=1,\hdots,N,
	\end{equation}
	where $X_{k}:=X_{t_{k}}$, $h:\R^{n}\times \R\to\R^{n}$ is a measurable function, the $\rho_k$ are $\R^n$-valued i.i.d. Gaussian random variables with zero mean and covariance $R_k$, and they are independent of $x_{0}$ and $\sigma(W_{s}: s\leq T)$. 
We consider $m$ such that $t_{m}\leq t < t_{m+1}$ and similarly to Section~\ref{sec:model:setup} define the filter density $p$ and smoothing density $\Ps$ by
\begin{align}
\Expec{\phi(X(t))|Y_1,\hdots,Y_m,x_0} &= \int \phi(x) p(x,t) \ \drv x \label{eq:def:filter:density:discrete} \\
\Expec{\phi(X(t))|Y_1,\hdots,Y_N,x_0} &= \int \phi(x) \Ps(x,t) \ \drv x,\label{eq:def:smoothing:density:discrete}
\end{align}
where $\phi$ is any measurable function from $\R^n$ to $\R$. It is well known (see \cite[Appendix]{ref:Eyink-00} for a derivation) that the smoothing can be expressed as
\begin{equation}\label{eq:smoothing:density:discrete} 
	\Ps(x,t)=\frac{p(x,t)w(x,t)}{\int_{\R^{n}}p(x,t)w(x,t)\drv x},
\end{equation}
where $p(x,t)$ and $w(x,t)$ in between the observation times are the solutions to the
	\begin{equation}\label{eq:filter:PDE:discrete}
	\text{Kolmogorov forward equation:} \hspace{10mm}
	\left\{ \begin{aligned}
	\drv p(x,t) &=  \mathcal{A}^{*} p(x,t) \drv t \\
	p(x,0) &= p_{0}(x),
	\end{aligned} \right.
	\end{equation}
	\begin{equation}\label{eq:v:PDE:discrete}
	\text{Kolmogorov backward equation:} \hspace{10mm}
	\left\{ \begin{aligned}
	\drv w(x,t) &=  -\mathcal{A} w(x,t) \drv t \\
	w(x,T) &= 1,
	\end{aligned} \right.
	\end{equation}
punctuated by jumps at the data points $t_{k}$ for $k=1,\hdots, N$
\begin{align}
p(x,t_{k}^{+}) &\propto p(x,t_{k}) \exp\left( y_{k}\transp R_{k}^{-1} h(x,t_{k}) - \frac{1}{2} h(x,t_{k})\transp R_{k}^{-1} h(x,t_{k}) \right) \label{eq:jump:cond:p}\\
w(x,t_{k}) &\propto w(x,t_{k}^{+}) \exp\left( y_{k}\transp R_{k}^{-1} h(x,t_{k}) - \frac{1}{2} h(x,t_{k})\transp R_{k}^{-1} h(x,t_{k}) \right) \label{eq:jump:cond:v}.
\end{align}
Similar to the continuous time measurement model, it can be shown that the smoothing density solves the Kolmogorov forward equation given by \eqref{eq:PDE:posterior2}, with drift function $g(x,t):=f(x)+a(x)\nabla\log w(x,t)$, where $w$ is the solution to \eqref{eq:v:PDE:discrete}. As before, we denote the prior probability measure by $\prior(A) = \Prob{X_{[0,T]}\in A}$ and the posterior probability measure, induced by the solution to \eqref{eq:PDE:posterior2}, by $\post(A,Y) = \Prob{X_{[0,T]}\in A|\FTY}$, where $\FTY=\sigma(x_{0},Y_{1},\hdots, Y_{N})$. 
Let $y_k$ denote a realization of the observation process at the time $t_k$.
The variational approximation derived in Section~\ref{sec:motivation}, and, in particular, Problem~\ref{prob:optimization:problem} carries over to the discrete time observation setting considered here. As before, the path to the objective function starts from Lemma \ref{Lemma:KL:cost}, which holds in this case with 
\begin{equation} \label{eq:KL:cost}
H_{T}(X,y) := \sum_{i=1}^{N} \left( \frac{1}{2}\| R_k^{-1}h(X_{i},t_{i}) \|^{2} - y_{i}\transp R_k^{-1} h(X_{i},t_{i}) \right).
\end{equation}
One way to see this is to recast the discrete model in the traditional setup of Section \ref{sec:model:setup}, and then use the Kallianpur-Striebel theorem.  To do this, first assume that without loss of generality $R_k = I$.  Define
 the function $\bar{h}: \SC{C}\times [0,T] \rt \R^n$ by
$$\bar{h}(x,t) = \sum_{k} (t_{k+1} - t_k)^{-1/2}h(x\circ \eta(t), \eta(t)) 1_{\{t_k\leq t<t_{k+1}\}},$$
where $\eta:[0,T] \rt [0,T]$ is defined as 
\begin{align*}
\eta(t) = t_k, \quad \mbox {if}\  t_k \leq t<t_{k+1}.
\end{align*}
Define the observation model $\tilde{Y}_t =\int_0^t \bar{h}(X_{s},s)\ ds + B_t$.
Notice that for each $k$, 
$$\tilde{Y}_{k+1} - \tilde{Y_k} = (t_{k+1}-t_k)^{1/2} h(X_k,t_k) +(B(t_{k+1}) - B(t_k)),$$
and hence 
$$(t_{k+1}-t_k)^{-1/2}(\tilde{Y}_{k+1} - \tilde{Y_k}) = h(X_k,t_k) + \tilde{\rho}_k,$$
where $\tilde{\rho_k} \stackrel{Law} = \rho_k \sim \SC{N}(0,I).$ In other words, $(t_{k+1}-t_k)^{-1/2}(\tilde{Y}_{k+1} - \tilde{Y_k}) \stackrel{Law}= Y_k$, and in this sense the discrete measurement model can be subsumed in the observation model given by $\tilde{Y}_t =\int_0^t \bar{h}(X_{s},s)\ ds + B_t$.

Notice that by the definitions of $\tilde{Y}$ and $\bar{h}$, the exponent in Kallianpur-Striebel formula is given by
\begin{align*}
\int_0^T \!\! \f{1}{2}\|\bar{h}(X,s)\|^2\drv s  - \! \int_0^T \! \! \bar{h}(X,s)\drv \tilde{Y}(s)\!&  = \sum_{k=1}^{N} \! \left( \! \frac{1}{2}\| h(X_{k},t_{k}) \|^{2} - \f{(\tilde{Y}_{k+1}\! - \tilde{Y_k})\transp}{(t_{k+1}\!-t_k)^{1/2}} h(X_{k},t_{k}) \!\! \right)\\
& \stackrel{Law}= \sum_{k=1}^{N} \left( \frac{1}{2}\| h(X_{k},t_{k}) \|^{2} -  Y_i\transp h(X_{k},t_{k}) \right),
\end{align*}
which leads to \eqref{eq:KL:cost}.
Therefore, in this case the objective function in Problem~\ref{prob:optimization:problem} can be expressed as
\begin{equation}\label{eq:obj_fct:discrete}
\KL{Q}{\prior} \!+ \CExpec{H_{T}(\cdot,y)}{Q} \!=\! \int_{0}^{T}\! \CExpec{\frac{1}{2}\norm{u(X_{t},t)-f(X_{t})}_{a(X_{t})}^2 + \iota(X_{t},t)}{Q}\! \drv t,
\end{equation}
where
\begin{equation} \label{eq:itoa}
	  \iota(X_{t},t) = \sum_{i=1}^{N} \left( y_{i}\transp R_k^{-1} h(X_{i},t_{i}) -\frac{1}{2}\| R_k^{-1}h(X_{i},t_{i}) \|^{2} \right) \delta(t-t_i).
\end{equation}
Section~\ref{sec:prescribed_marginal_law} is independent of the considered measurement model, and by following Section~\ref{sec:optimal:control:problem:formulation} we arrive at an optimal control problem \eqref{eq:optimal:control:problem}, where the cost functional is replaced by \eqref{eq:obj_fct:discrete}. The derivation of necessary conditions for global optimality of the optimization problem \eqref{eq:optimal:control:problem}, compared to the continuous time measurement model, here is somewhat nonstandard, due to the Dirac delta terms \eqref{eq:itoa} involved in the Lagrangian. However, the problem can be seen as an OCP with so-called \emph{intermediate constraints}, for which an extension of the PMP is available \cite{ref:Dmitruk-08}.
 
\begin{assumption}\label{ass:PMP:discrete} \em{
Let the process $(z^{\star}(t),v^{\star}(t))_{t\in[0,T]}$ be a local minimizer for the optimal control problem \eqref{eq:optimal:control:problem}, that satisfies
\begin{enumerate}[(i)]
\item Assumptions~\ref{ass:PMP}\eqref{ass:PMP:i} and \eqref{ass:PMP:ii};
\item $v^{\star}$ is measurable and essentially bounded.
\end{enumerate} }  
\end{assumption}
 
\begin{lemma}[Extended PMP] \label{lem:maximum:principle:discrete}
Let the process $(z^{\star}(t),v^{\star}(t))_{t\in[0,T]}$ be a local minimizer for the problem \eqref{eq:optimal:control:problem}. Given Assumption~\ref{ass:PMP:discrete}, then there exists an absolutely continuous function $p:[0,T]\to   \mathcal{Z}$ satisfying
\begin{enumerate}
\item the adjoint equation $\dot{p}(t)=-\nabla_z\inprod{p(t)}{H(z^{\star}(t),v^{\star}(t))} - \nabla_z L(t,z^{\star}(t),v^{\star}(t))$ for almost all $t\in[0,T];$
\item the transversality conditions $p(t_{i}) = p(t_{i}^{-}) - \nabla_{z}\CExpec{\iota(X,t_{i})}{Q}$ for $i=1,\hdots,N$ and $p(T)=0;$
\item the maximum condition 
\begin{align*}
&\inprod{p(t)}{H(z^{\star}(t),v^{\star}(t))} +  L(t,z^{\star}(t),v^{\star}(t))  \\ 
&\quad = \sup_{v\in\mathcal{M}([0,T],\mathcal{V})} \inprod{p(t)}{H(z^{\star}(t),v(t))} +  L(t,z^{\star}(t),v(t)) \text{ for almost all } t\in[0,T].
\end{align*}
\end{enumerate}  
\end{lemma}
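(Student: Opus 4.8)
\medskip
\noindent\textit{Proof sketch.} The plan is to deduce the statement from the extension of the Pontryagin maximum principle to optimal control problems with intermediate cost terms due to Dmitruk and Kaganovich \cite{ref:Dmitruk-08}. The first step is to notice that, despite the Dirac factors \eqref{eq:itoa}, the cost functional attached to the OCP \eqref{eq:optimal:control:problem} with objective \eqref{eq:obj_fct:discrete} is really of \emph{staged} type:
\begin{equation*}
J(v) = \int_{0}^{T} L_{c}(t,z(t),v(t))\,\drv t + \sum_{i=1}^{N}\ell_{i}(z(t_{i})) + F(z(T)),
\end{equation*}
where $L_{c}(t,z,v)$ is the genuinely $t$-continuous part of the integrand of \eqref{eq:obj_fct:discrete}, namely $\tfrac12\,\CExpec{\norm{u(X_{t},t)-f(X_{t})}_{a(X_{t})}^{2}}{Q}$ rewritten through the state and control by means of Theorem~\ref{thm:brigo_mixture_multi} (Proposition~\ref{prop:drift:fct} in the multinormal case); the atom at $t_{i}$ contributes the point cost $\ell_{i}(z(t_{i})):=\CExpec{\iota(X,t_{i})}{Q}$; and $F\equiv 0$ because in the discrete model there is no terminal term. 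In this picture the only ``intermediate constraint'' at each $t_{i}$ is the continuity $z(t_{i}^{-})=z(t_{i})$ of the trajectory, which is inactive, so the problem is normal, the cost multiplier may be normalized to one, and no nontriviality caveat arises.

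Next I would verify the standing hypotheses of \cite{ref:Dmitruk-08} against Assumption~\ref{ass:PMP:discrete}. Continuity of $H$ and $L_{c}$ together with continuity of their $z$-derivatives in all of $(t,z,v)$ is Assumption~\ref{ass:PMP:discrete}(i), which carries over parts (i) and (ii) of Assumption~\ref{ass:PMP}; the intermediate costs $\ell_{i}$ and the terminal cost $F\equiv 0$ are $C^{1}$ in their arguments (as is implicitly required since $\nabla_{z}\CExpec{\iota(X,t_{i})}{Q}$ figures in the conclusion, and which for the Gaussian-mixture parametrization amounts to smoothness of the moments $\CExpec{\iota(X,t_{i})}{Q}$ in the mean/covariance entries collected in $z(t_{i})$); and the growth condition on $\nabla_{z}(H,L_{c})$ that the ordinary principle needs is automatic as soon as $v^{\star}$ is essentially bounded, which is exactly Assumption~\ref{ass:PMP:discrete}(ii) (cf.\ the remark after Assumption~\ref{ass:PMP}). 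Thus \cite{ref:Dmitruk-08} is applicable to the local minimizer $(z^{\star},v^{\star})$.

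That theorem then delivers a costate $p:[0,T]\to\mathcal{Z}$ which is absolutely continuous on each open subinterval $(t_{i-1},t_{i})$ — hence of bounded variation on $[0,T]$ with jumps only at the observation times — and satisfies: on each such subinterval the adjoint equation $\dot p(t)=-\nabla_{z}\inprod{p(t)}{H(z^{\star}(t),v^{\star}(t))}-\nabla_{z}L(t,z^{\star}(t),v^{\star}(t))$; at each data point the jump $p(t_{i})-p(t_{i}^{-})=-\nabla_{z}\ell_{i}(z^{\star}(t_{i}))=-\nabla_{z}\CExpec{\iota(X,t_{i})}{Q}$, i.e.\ the stated transversality relation, together with $p(T)=\nabla_{z}F(z^{\star}(T))=0$; and, for almost every $t$, the pointwise optimization of the Hamiltonian $\inprod{p(t)}{H(z^{\star}(t),\cdot)}+L(t,z^{\star}(t),\cdot)$ over $\mathcal{V}$. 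The optimization is stated as a supremum over $\mathcal{M}([0,T],\mathcal{V})$ only to conform to the sign convention of \cite{ref:Dmitruk-08}, and, being pointwise in $t$, it is equivalent to optimizing over constant $v\in\mathcal{V}$. Collecting these three items is exactly the assertion of the lemma.

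The step I expect to be the genuine obstacle is the rigorous identification of the measure-valued Lagrangian \eqref{eq:obj_fct:discrete}--\eqref{eq:itoa} with the staged problem above, and the attendant claim that the costate is absolutely continuous between consecutive $t_{i}$ and jumps there by \emph{precisely} $-\nabla_{z}\ell_{i}$. Besides quoting \cite{ref:Dmitruk-08} directly, a self-contained alternative is a mollification argument: replace each $\delta(t-t_{i})$ by an approximate identity $\delta_{\varepsilon}(t-t_{i})$ so that the regularized OCP has a smooth Lagrangian and falls under Lemma~\ref{lem:maximum:principle}, then let $\varepsilon\downarrow0$. Using the essential boundedness of $v^{\star}$ and the local Lipschitz structure of $H$ and $L_{c}$ to obtain $\varepsilon$-uniform bounds on the regularized state and costate $(z_{\varepsilon}^{\star},p_{\varepsilon})$, one shows $p_{\varepsilon}\to p$ uniformly on compact subsets of $[0,T]\setminus\{t_{1},\dots,t_{N}\}$ and that the costate increment accumulated over a shrinking neighborhood of each $t_{i}$ converges to $-\nabla_{z}\ell_{i}(z^{\star}(t_{i}))$. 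Establishing those uniform bounds and pinning down the jumps is where the real work sits; everything else is bookkeeping.
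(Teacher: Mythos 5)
Your proposal takes essentially the same route as the paper: the paper's entire proof is the one-line observation that the result ``follows directly from \cite{ref:Dmitruk-08}, when transforming problem \eqref{eq:optimal:control:problem} into an OCP with intermediate constraints,'' and your recasting of the Dirac terms \eqref{eq:itoa} as staged point costs $\ell_i(z(t_i))$, your verification of the hypotheses via Assumption~\ref{ass:PMP:discrete}, and your reading-off of the adjoint equation, jump conditions, and maximum condition are exactly the details that citation suppresses. Your fleshed-out version (including the correct observation that $p$ is only piecewise absolutely continuous with jumps at the $t_i$, and the optional mollification fallback) is, if anything, more careful than the paper's own treatment.
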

\begin{proof}
Follows directly from \cite{ref:Dmitruk-08}, when transforming problem \eqref{eq:optimal:control:problem} into an OCP with intermediate constraints.
\end{proof}
 
\begin{remark} \rm{
\begin{enumerate}
\item Note that the data (measurements) enter the expression through the cost function, namely the term \eqref{eq:itoa}, which is nonzero only at measurement times $\{t_{i}\}_{i=1}^{N}$ and leads to jumps in the adjoint state. 
\item Lemma~\ref{lem:maximum:principle:discrete}, basically leads to a boundary value problem, that provides necessary conditions for optimality of Problem~\ref{prob:optimization:problem}. See Section~\ref{sec:computational:complexity} for a discussion about how to numerically solve it. We refer to the numerical examples in Section~\ref{sec:examples} for the performance of such a solution. 
\end{enumerate} }
\end{remark}

	  \section{Simulation results} \label{sec:examples} \
	  In this section, we present two examples to illustrate the performance of the variational approximation method introduced. Both examples have important applications in mathematical finance. 
	  As a first example, we consider the geometric Brownian motion that we introduced as a running example in Sections~\ref{sec:GBM:example:1}, \ref{sec:GBM:example:2} and \ref{sec:GBM:example:3}. The second example is concerned with the Cox-Ingersoll-Ross process, that is often used for describing the evolution of interest rates \cite{ref:Cox-85}.

	 \subsection{Geometric Brownian motion} \label{sec:ex1:GBM}
  	As presented in Sections~\ref{sec:GBM:example:1}, \ref{sec:GBM:example:2} and \ref{sec:GBM:example:3} we consider a one-dimensional geometric Brownian motion (GBM) \eqref{eq:system:GBM1} and assume that the available data are noisy observations $\{y_{k}\}_{k=1}^{N}$ at time $t_{k}$, modeled by the observation process
	\begin{equation*}
		 Y_{k}=X_{t_{k}}+\rho_{k},
	\end{equation*}  
	 where $\{\rho_{k}\}_{k=1}^{N}$ are i.i.d.~normal random variables with zero mean, standard deviation $R$ and $t_{N}=T$. \vspace{1mm}

\np
\textit{PDE approach.} 
	As explained in Section~\ref{sec:discrete:measurements}, the smoothing density can be characterized by \eqref{eq:smoothing:density:discrete} that is the (normalized) product of two densities $w$ and $p$.
	The first density satisfies equation \eqref{eq:v:PDE:discrete} with jump conditions \eqref{eq:jump:cond:v} at the measurement times
	and terminal condition $w(x,T)=\tfrac{1}{\sqrt{2\pi}R}\exp\left( \tfrac{-(x-y_{N})^{2}}{2R^{2}} \right)$. Its marginals are shown in Figure~\ref{fig:GBM:likeli}.
	The second density, called the filter density, is given by equation \eqref{eq:filter:PDE:discrete} with jump conditions \eqref{eq:jump:cond:p}
and initial condition $p(x,0) = \tfrac{1}{\sqrt{2\pi}x\sigma}\exp\left( \tfrac{-(\log x - \mu)^{2}}{2\sigma^{2}} \right)$ that is given by \eqref{eq:system:GBM1}. Its marginals are shown in Figure~\ref{fig:GBM:filter}. The smoothing density is depicted in Figure~\ref{fig:GBM:smoothing} as the solid line. \vspace{1mm}

\np
\textit{Variational approximation.}
Following Section~\ref{sec:GBM:example:2}, the drift function for the approximating SDE~\eqref{eq:approx_SDE} has to be chosen as \eqref{eq:GBM:drift}. The optimal control problem can be formulated along the lines of Section~\ref{sec:GBM:example:3}, choosing the discrete-time measurement setting presented in Section~\ref{sec:discrete:measurements}.
Note that Assumption~\ref{ass:PMP:discrete} can be easily verified to hold, if we restrict the optimizers in \eqref{eq:optimal:control:problem} to bounded controls. We solve the boundary value problem obtained from Lemma~\ref{lem:maximum:principle:discrete} under the assumption that the smoothing density at initial time is available, see Section~\ref{sec:computational:complexity} for a discussion about this assumption. The solution is depicted in Figure~\ref{fig:GBM:smoothing} as the dashed line. Finally, Figure~\ref{fig:GBM:KL} shows the relative entropy between the marginals of the smoothing density obtained by the PDE approach and the variational method, and hence reflects the accuracy of the variational approximation. \vspace{1mm}

\np
\textit{Parameter inference.}
We consider the case where the drift parameter $\kappa$ in \eqref{eq:system:GBM1} is assumed to be unknown. Figure~\ref{fig:GBM:parameter} shows the performance of the \hyperlink{algo:1}{EM}-Algorithm introduced in Section~\ref{sec:parameter:inference} for an initial guess $\hat{\kappa}_{0}=4$ of the unknown parameter. It can be seen that the estimator $\hat{\kappa}$ is close to the true value of $\kappa=1$ indicating the efficacy of our algorithm. Also, the algorithm converges quite rapidly.

	\begin{figure}[!htb]
	   \centering
	   		\begin{subfigure}[b]{0.3\textwidth}
			\newlength\figureheight 
			\newlength\figurewidth 
			\setlength\figureheight{3cm} 
			\setlength\figurewidth{4.5cm} 
    		        \includegraphics[scale = 0.65]{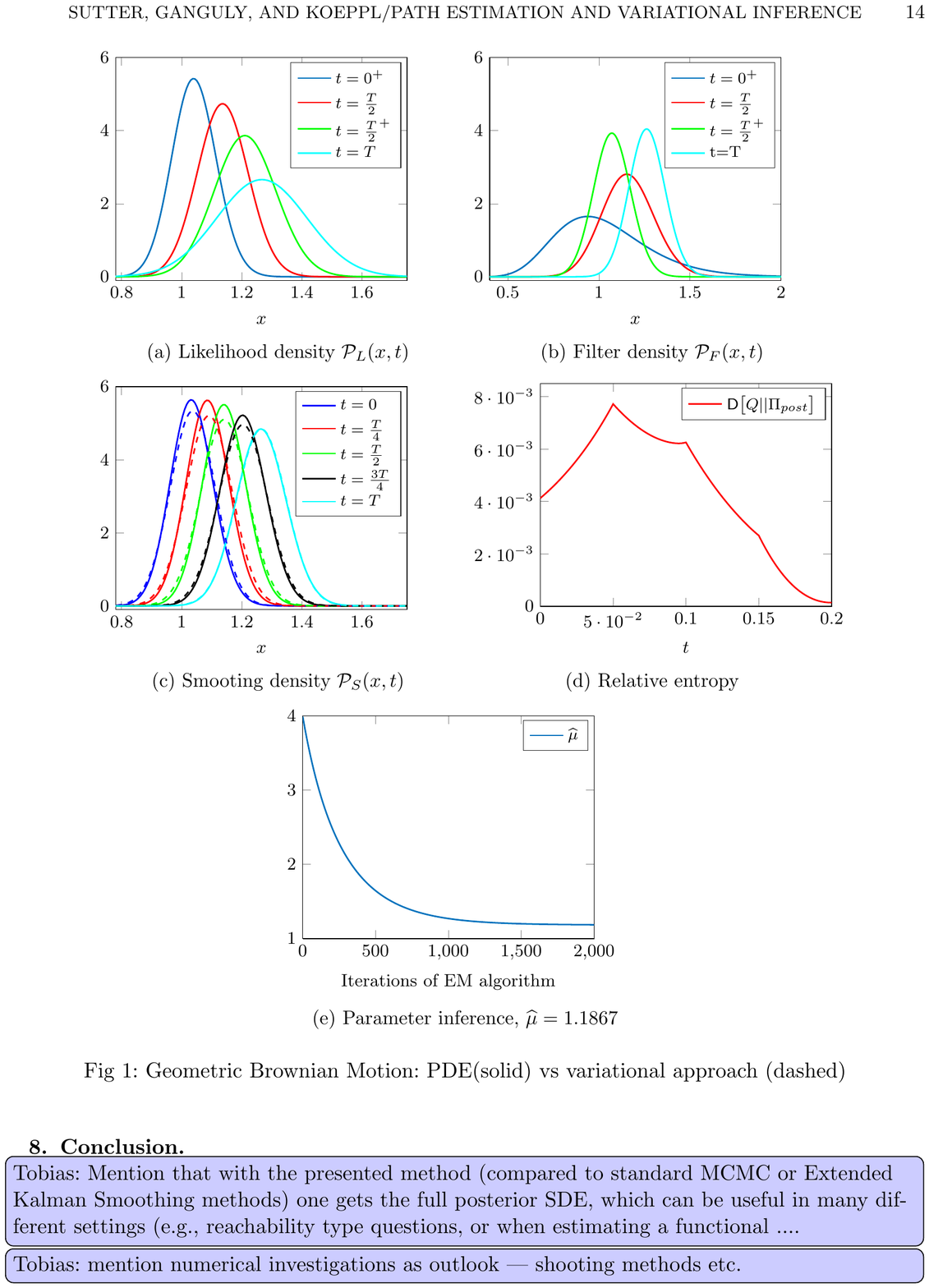} 
			\caption{Density $w(x,t)$} \label{fig:GBM:likeli}
		\end{subfigure}
		\begin{subfigure}[b]{0.3\textwidth}
		        \setlength\figureheight{3cm} 
			\setlength\figurewidth{4.5cm} 
		        \includegraphics[scale = 0.65]{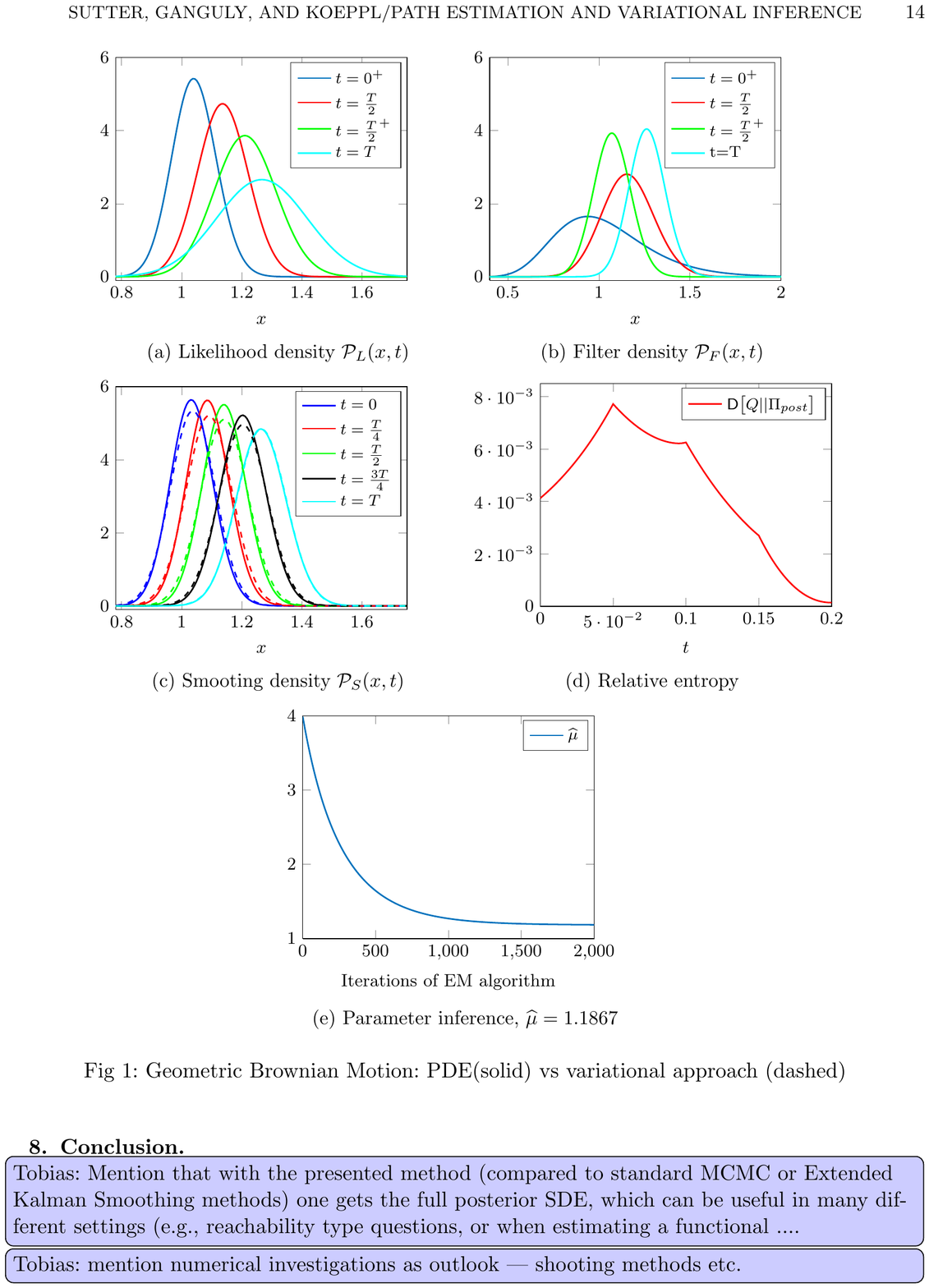} 
			\caption{Filter density $p(x,t)$} \label{fig:GBM:filter}
		\end{subfigure}
				\begin{subfigure}[b]{0.35\textwidth}
    		        \setlength\figureheight{3cm} 
			\setlength\figurewidth{4.5cm} 
		         \includegraphics[scale = 0.65]{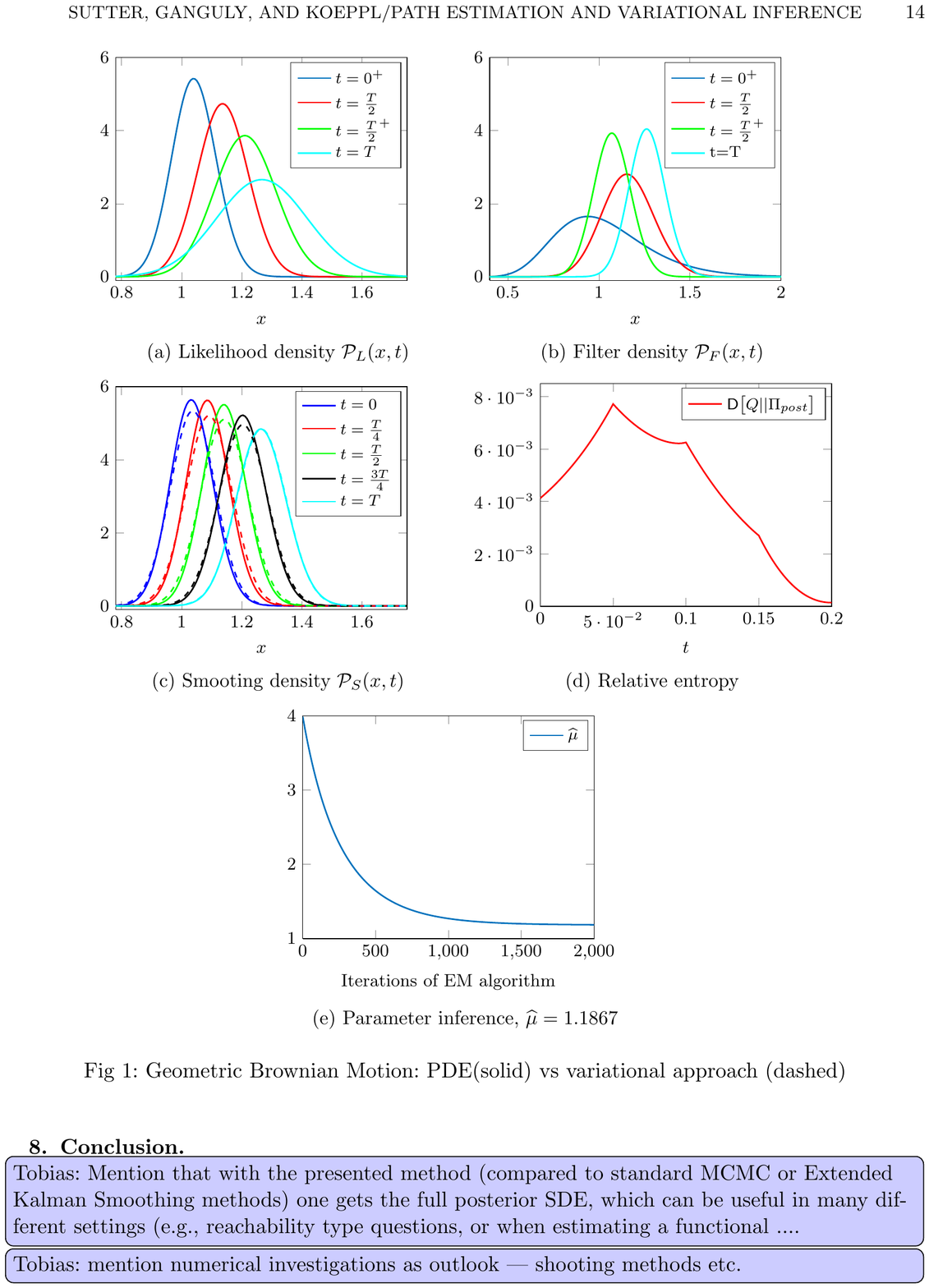} 
			\caption{Smooting \! density \!$\Ps(x,t)$} \label{fig:GBM:smoothing}
		\end{subfigure}
		\begin{subfigure}[b]{0.4\textwidth}
		        \setlength\figureheight{3cm} 
			\setlength\figurewidth{4.5cm} 
		    \includegraphics[scale = 0.65]{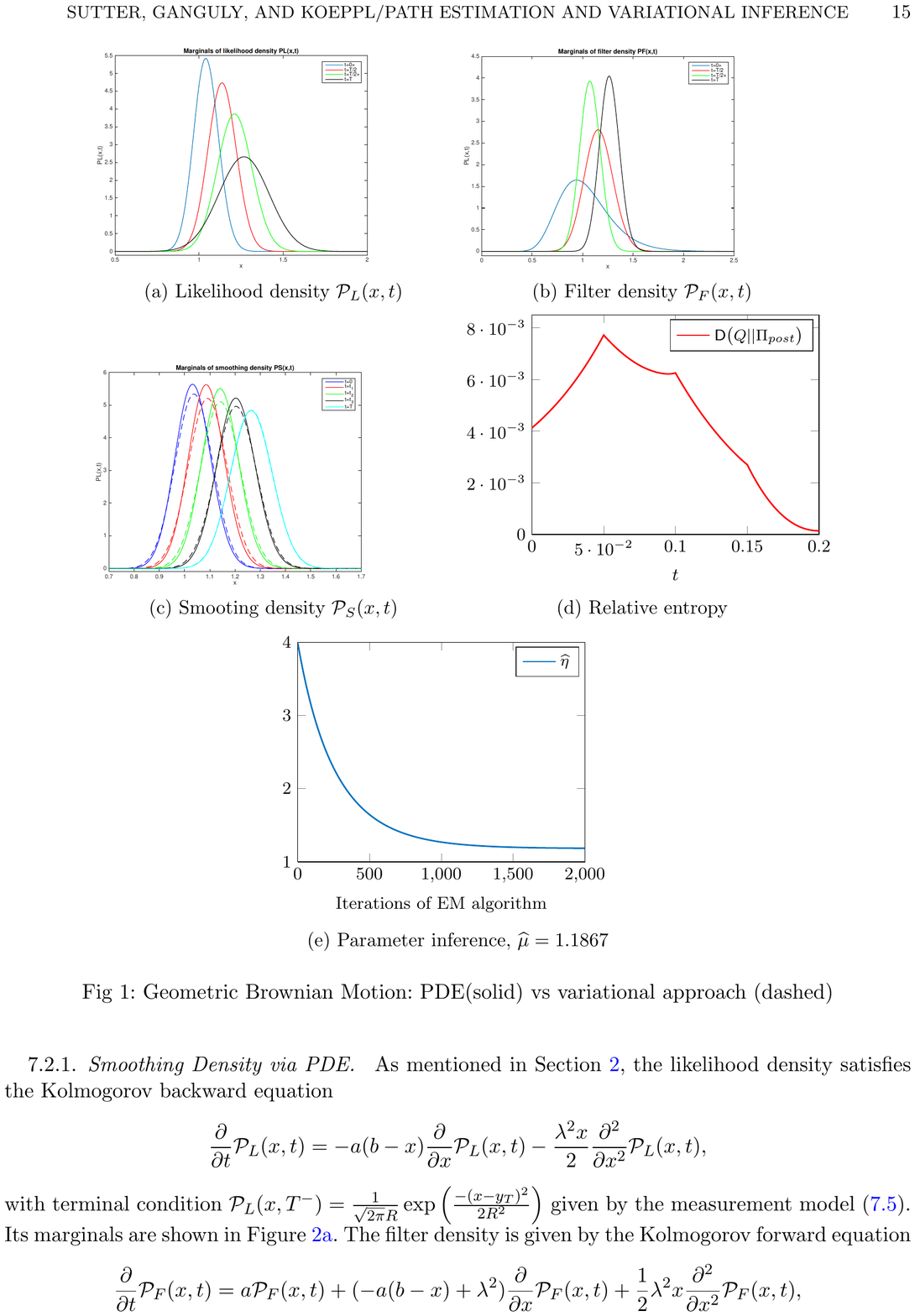} 
			\caption{Relative entropy} \label{fig:GBM:KL}
		\end{subfigure}
		\begin{subfigure}[b]{0.4\textwidth} 
		       \setlength\figureheight{3cm} 
			\setlength\figurewidth{4.5cm} 
		        \includegraphics[scale = 0.65]{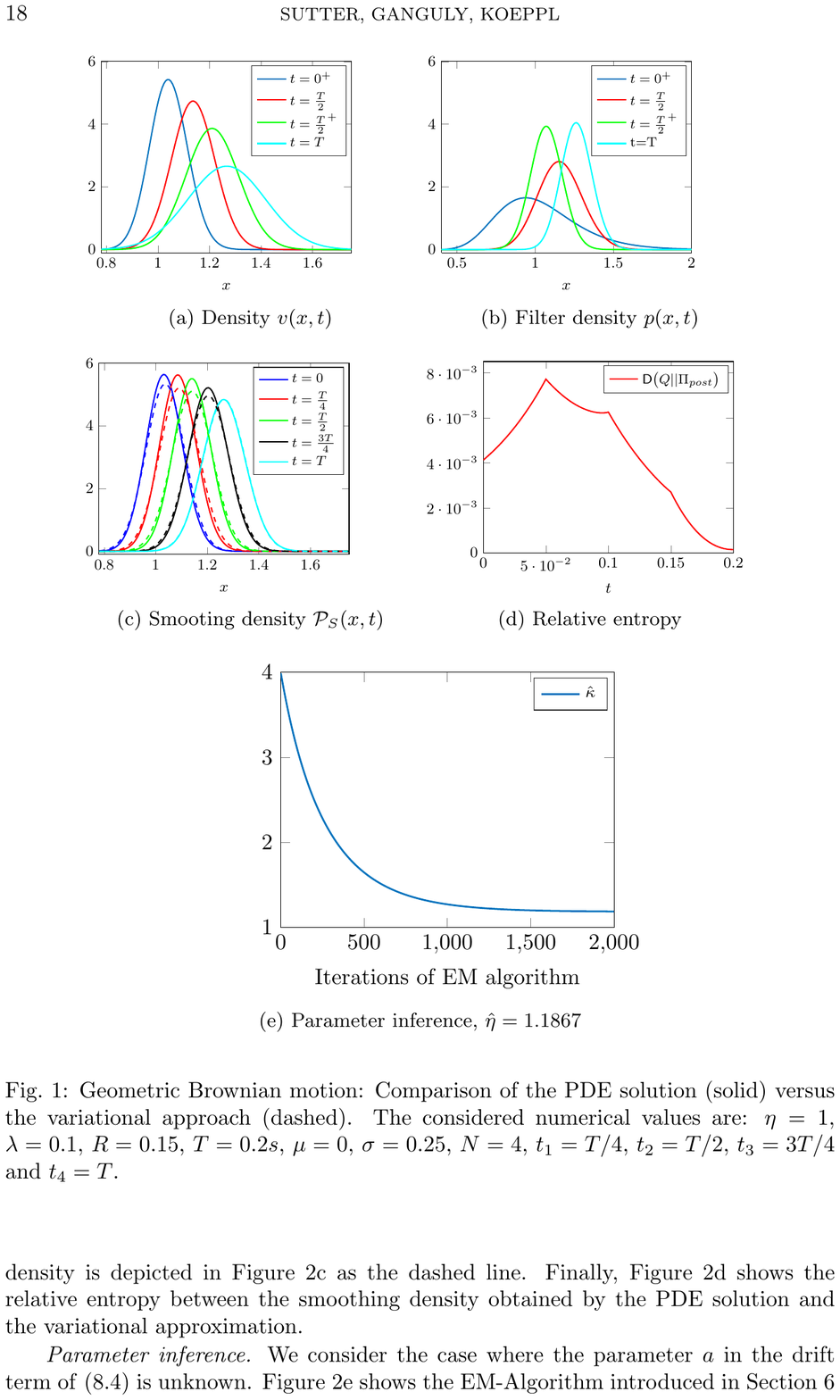} 
			\caption{Parameter inference, $\hat{\kappa}=1.1867$} \label{fig:GBM:parameter}
		\end{subfigure}
    \caption{Geometric Brownian motion: Comparison of the PDE solution (solid) versus the variational approach (dashed). The considered numerical values are: $\kappa =1$, $\lambda=0.1$, $R=0.15$, $T=0.2s$, $\mu=0$, $\sigma=0.25$, $N=4$, $t_{1}=T/4$, $t_{2}=T/2$, $t_{3}=3T/4$ and $t_{4}=T$.}
    \label{fig:GBM:variational}
\end{figure}

 \subsection{Cox-Ingersoll-Ross} \label{sec:ex3:CIR} 
 Consider as underlying system a Cox-Ingersoll-Ross (CIR) process
	\begin{equation} \label{eq:system:CIR}
	 	\drv X_{t} = \kappa (b- X_{t})\drv t+\lambda \sqrt{X_{t}} \drv W_{t},\quad X_{0}=x_0\sim \mathcal{N}(\mu,\sigma), 
	\end{equation}
	for $0\leq t \leq T$ and assume that the available data are noisy observations $\{y_{k}\}_{k=1}^{N}$ at time $t_{k}$, modeled by an observation process
	\begin{equation*} 
		 Y_{k}=X_{t_{k}}+\rho_{k},
	\end{equation*}  
	 where $\rho_{k}$ are i.i.d.~normal random variables with zero mean, standard deviation $R$ and $t_{N}=T$. \vspace{1mm}
	 
	 \np
	\textit{PDE approach.} 
	As in the GBM example \ref{sec:ex1:GBM}, we solve the underlying PDEs introduced in Section~\ref{sec:discrete:measurements}, to characterize the smoothing density as the (normalized) product of two densities $v$ and $p$. Figure~\ref{fig:CIR:likeli} shows the marginals of the first density $w$, the filter density $p$ is depicted in Figure~\ref{fig:CIR:filter} and the smoothing density in Figure~\ref{fig:CIR:smoothing} as the solid line. \vspace{1mm}


\np	
\textit{Variational approximation.} 
The variational approximation is derived similarly to the GBM example \ref{sec:ex1:GBM}, where we chose a drift function for the approximating SDE~\eqref{eq:approx_SDE}, according to Theorem~\ref{thm:brigo_mixture_multi}, as
	\begin{equation} \label{eq:CIR:drift}
	u(x,t) = \frac{1}{2}\lambda^2 + A(t) + B(t)x + \lambda^2x(C(t)+D(t)x).
	\end{equation}
The variational approximation to the smoothing density is depicted in Figure~\ref{fig:CIR:smoothing} as the dashed line. Finally, Figure~\ref{fig:CIR:KL} shows the relative entropy between the marginals of the smoothing density obtained by the PDE solution and the variational approximation. 	\vspace{1mm}

\begin{figure}[!htb]
  \centering
		\begin{subfigure}[b]{0.3\textwidth}
			\setlength\figureheight{3cm} 
			\setlength\figurewidth{4.5cm} 
                        \includegraphics[scale = 0.65]{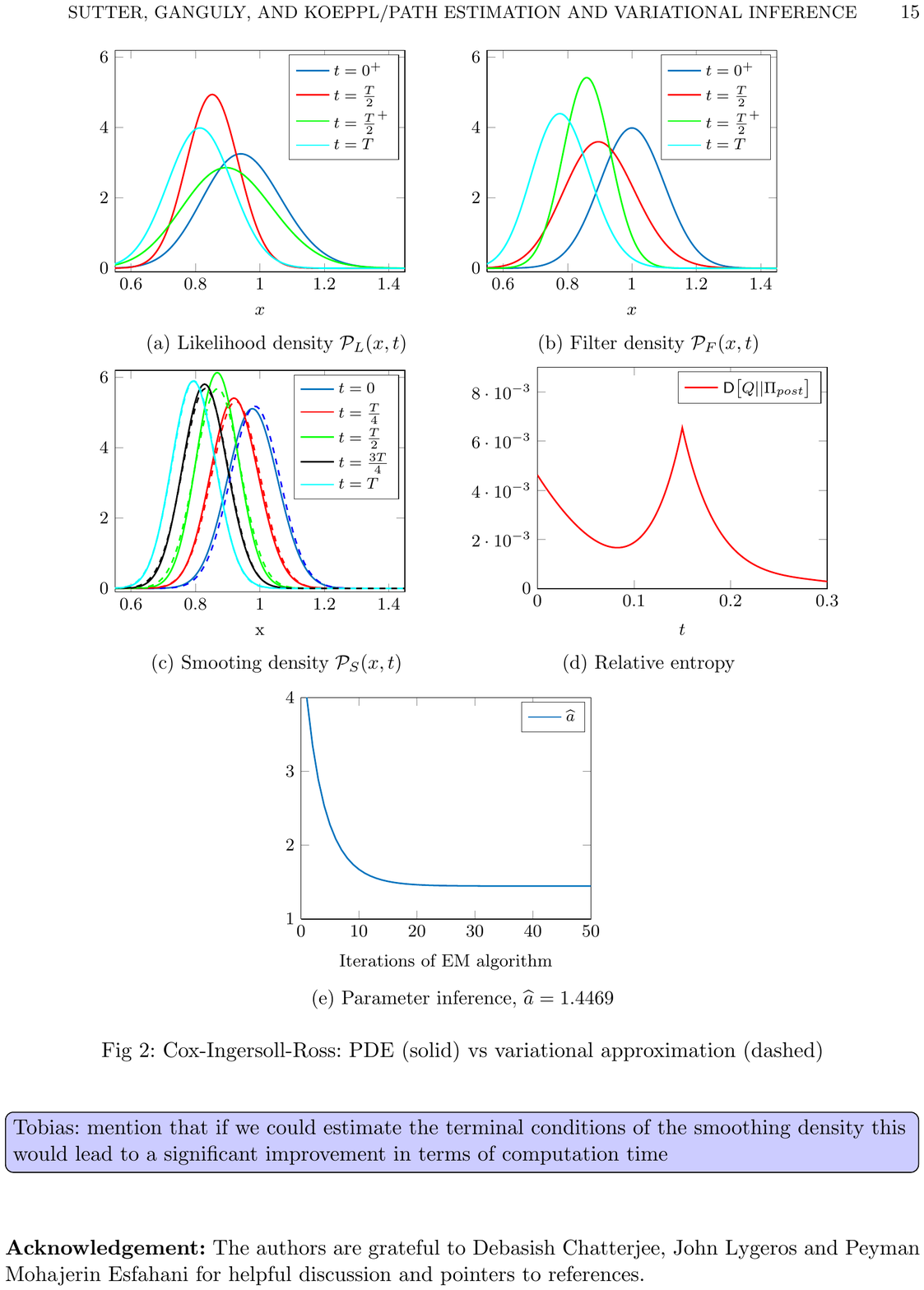} 
		        \caption{Density $w(x,t)$} \label{fig:CIR:likeli}
		\end{subfigure}
		\begin{subfigure}[b]{0.3\textwidth}
			\setlength\figureheight{3cm} 
			\setlength\figurewidth{4.5cm} 
		       \includegraphics[scale = 0.65]{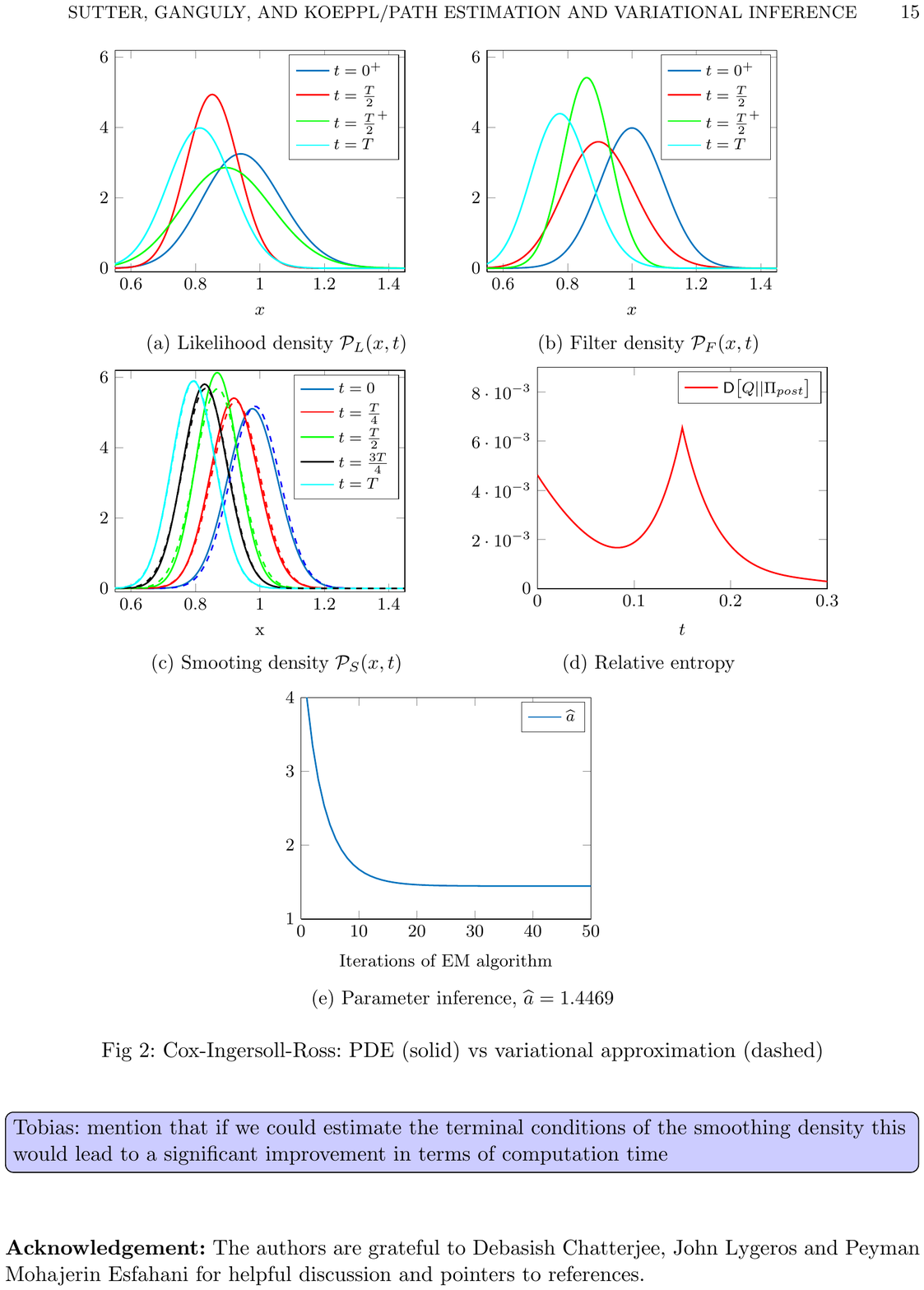}
			\caption{Filter density $p(x,t)$} \label{fig:CIR:filter}
		\end{subfigure} 
		\begin{subfigure}[b]{0.35\textwidth}
			\setlength\figureheight{3cm} 
			\setlength\figurewidth{4.5cm} 
		        \includegraphics[scale = 0.65]{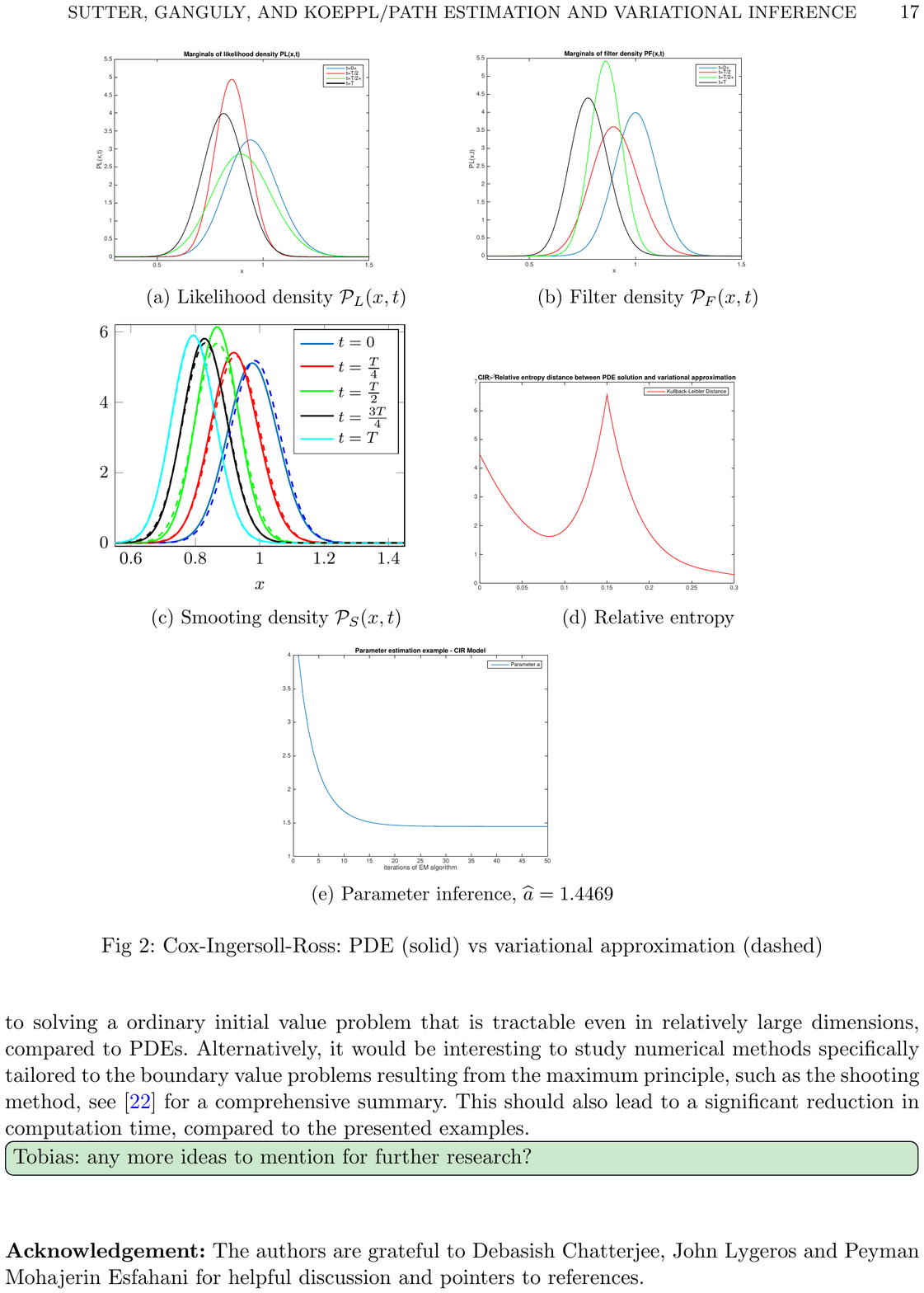} 
			\caption{Smooting density $\Ps(x,t)$} \label{fig:CIR:smoothing}
		\end{subfigure}
		\begin{subfigure}[b]{0.4\textwidth}
		       \setlength\figureheight{3cm} 
			\setlength\figurewidth{4.5cm} 
		       \includegraphics[scale = 0.65]{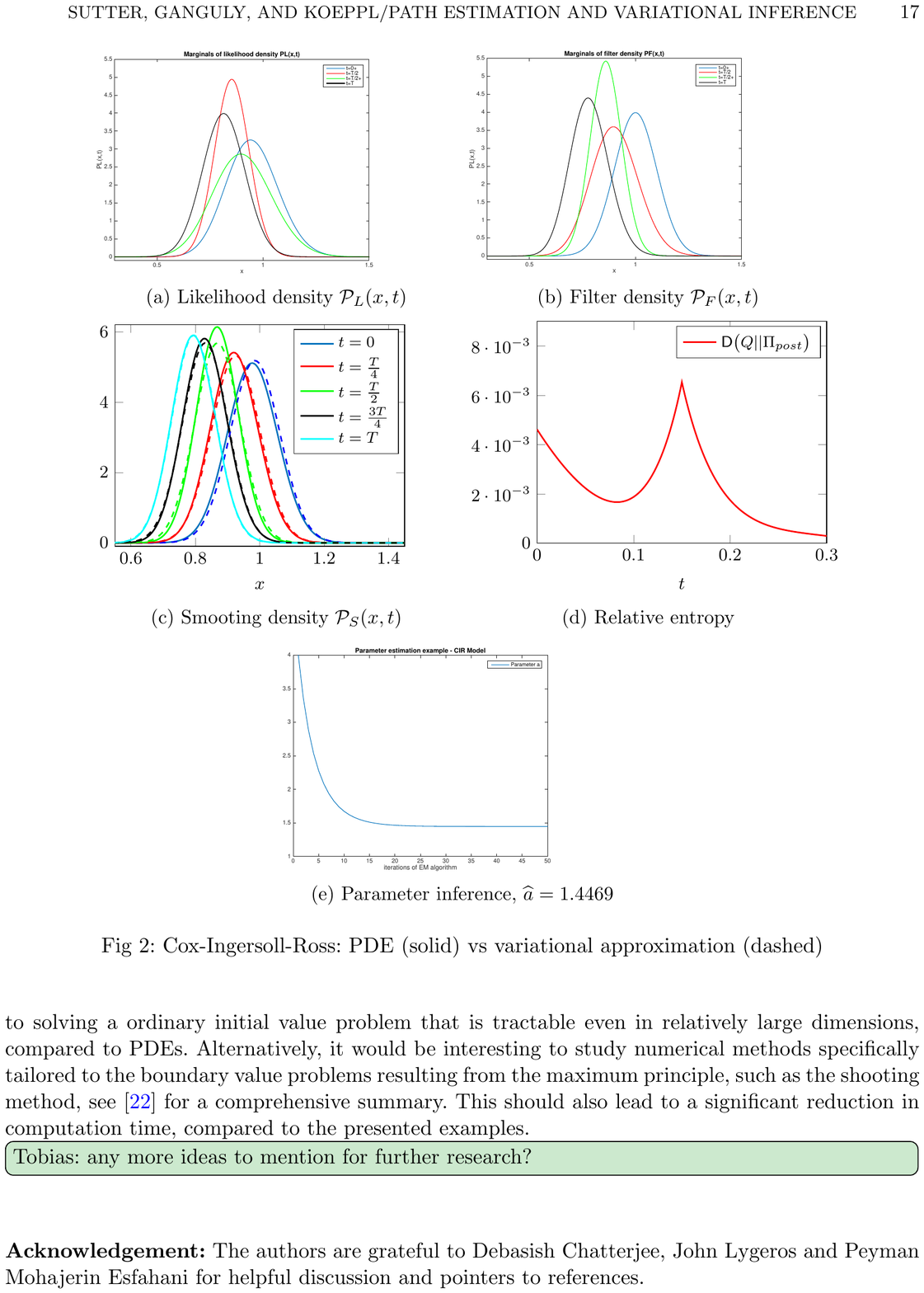} 
			\caption{Relative entropy} \label{fig:CIR:KL}
		\end{subfigure}
		\begin{subfigure}[b]{0.4\textwidth}
		         \setlength\figureheight{3cm} 
			\setlength\figurewidth{4.5cm} 
		  \includegraphics[scale = 0.65]{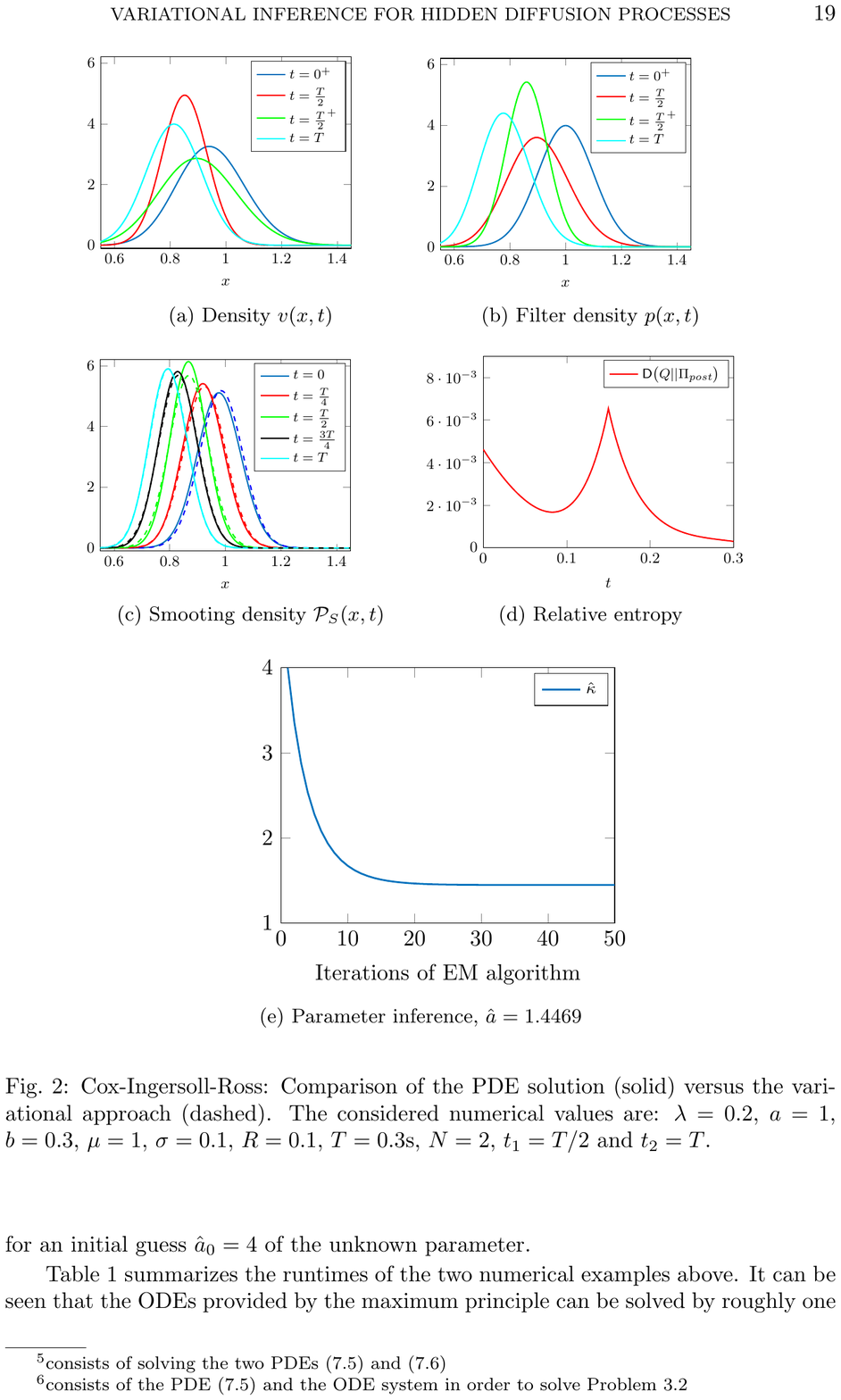} 
			\caption{Parameter inference, $\hat{\kappa}=1.4469$} \label{fig:CIR:parameter}
		\end{subfigure}	
    \caption{Cox-Ingersoll-Ross: Comparison of the PDE solution (solid) versus the variational approach (dashed). The considered numerical values are: $\lambda = 0.2$, $\kappa=1$, $b=0.3$, $\mu=1$, $\sigma=0.1$, $R=0.1$, $T=0.3$s, $N=2$, $t_{1}=T/2$ and $t_{2}=T$.}
    \label{fig:CIR:PDE}
\end{figure} 

\np
\textit{Parameter inference.}
We consider the case where the parameter $\kappa$ in the drift term of \eqref{eq:system:CIR} is unknown. Figure~\ref{fig:CIR:parameter} shows the \hyperlink{algo:1}{EM}-Algorithm introduced in Section~\ref{sec:parameter:inference} for an initial guess $\hat{\kappa}_{0}=4$ of the unknown parameter.	
 It can be seen that the estimator $\hat{\kappa}$ is close to the true value of $\kappa=1$ indicating the efficacy of our algorithm. Also, the algorithm converges very fast.

 \begin{table}[!htb]
\centering 
\caption{ {\bf Runtime comparison.} The presented variational approach for approximating the smoothing density is compared with the standard PDE approach for the two examples in Sections~\ref{sec:ex1:GBM} and \ref{sec:ex3:CIR}. All simulations are performed on a 2.3 GHz Intel Core i7 processor with 8 GB RAM using Matlab.}
\label{tab:runtime}

  \begin{tabular}{c @{\hskip 3mm} | @{\hskip 3mm} c @{\hskip 2mm} c }
    & Geometric Brownian motion & Cox-Ingersoll-Ross  \\
    \hline
    Forward PDE \eqref{eq:filter:PDE:discrete} & 2.02 s & 1.33 s \\
    Backward PDE \eqref{eq:v:PDE:discrete} & 2.87 s & 2.38 s \\    
    Boundary value problem & 0.10 s & 0.23 s \\   
    \hline
 PDE approach\protect\footnotemark & 4.89 s & 3.70 s  \\
  Variational approach\protect\footnotemark & 2.97 s & 2.61 s  \\
  \end{tabular}
\end{table}
 \addtocounter{footnote}{-1}
 \footnotetext{consists of solving the two PDEs \eqref{eq:filter:PDE:discrete} and \eqref{eq:v:PDE:discrete}}
 \addtocounter{footnote}{1}
 \footnotetext{consists of the PDE \eqref{eq:v:PDE:discrete} and the boundary value problem in order to solve Problem~\ref{prob:optimization:problem}}

 Table~\ref{tab:runtime} summarizes the runtimes of the two numerical examples above. It can be seen that the boundary value problems provided by the maximum principle can be solved by roughly one magnitude faster than the backward PDE~\eqref{eq:v:PDE:discrete}, that is the reason for the speedup of the variational approach compared to the PDE approach. Moreover, it is highlighted that the main computational effort in the variational approach is needed to estimate the marginal smoothing density at initial time, which is done by solving the backward PDE \eqref{eq:v:PDE:discrete}. If, however, the backward density at initial time could be estimated in a more efficient way, e.g., by using an MCMC method, the proposed variational approximation method could be applicable to high-dimensional problems.

\section{Conclusion} \label{sec:conclusion}
The paper is devoted to a variational method for estimating paths of a signal process in a hidden Markov model. In particular, this leads to approximations of smoothing density which can be used to reconstruct any past state of the signal process given a full set of observations. A crucial fact that plays an important role in our method is that the smoothing distribution is induced by a posterior SDE which itself is a modification of the original signal process.  
%
The presented variational approach proposes an approximate SDE which minimizes the relative entropy between the posterior SDE and  a class of SDEs whose marginals belong to a chosen mixture of exponential families.
In the simplest case of normal marginals and a posterior SDE with constant diffusion term, the approximating SDE consists of a linear drift and constant diffusion term, which is well known. It is shown that the prescribed approximation scheme can be formulated as an optimal control problem, and necessary conditions for global optimality are obtained by the Pontryagin maximum principle. The resulting numerical methods have considerable computational advantages over numerically solving the underlying (S)PDEs, that is highlighted by two examples. The developed approximation scheme is then used for designing an efficient method for parameter inference for SDEs.  \vspace{1mm}

For future work, as mentioned in Section~\ref{sec:computational:complexity}, we aim to study how to efficiently estimate the backward density at initial time. Then, the presented variational approximation method reduces to solving a PMP-shooting-type boundary value problem that is tractable even in relatively large dimensions, compared to PDEs. Additionally, it would be interesting to study numerical methods specifically tailored to the boundary value problems resulting from the maximum principle, such as the shooting method, see \cite{ref:Stoer-02} for a comprehensive summary, as well as the approach of solving the optimal control problem via its weak formulation as pointed out in Remark~\ref{rem:SDP}.\\
Our future projects will also delve into analyzing the convergence of the EM-type algorithm used for parameter inference as well as the properties of the obtained estimators. We will also focus on refining the basic inference algorithm to get better efficiency and speed. One promising path to take in this direction would be designing of suitable adaptive EM-type algorithms. It is  also conceivable that the ideas mentioned in the paper can be combined with suitable MCMC schemes or techniques known as Assumed Density Filtering, see \cite{ref:Yuval-15}, to get better accuracy and efficiency in high-dimensional models.


\acks{This work was supported by the ETH grant (ETH-15 12-2). The authors are grateful to Debasish Chatterjee, John Lygeros, and Peyman Mohajerin Esfahani for helpful discussions and pointers to references. H.K. acknowledges funding from the LOEWE research priority program CompuGene.}


\newpage

\appendix

\begin{appendix}
 \section{Derivation of Equation~\eqref{eq:PDE:posterior2}} \label{app:smoothing:SDE}
	We consider the one-dimensional case;  an extension to the multi-dimensional case is straightforward. According to \eqref{eq:smoothing:density:char} the smoothing density is given by $\Ps(x,t) = K(t) p(x,t) w(x,t)$, where $K(t):=\left(\int_{R^{n}}p(x,t)w(x,t)\drv x\right)^{-1}$. The main idea is to recall that the process $(K(t))_{t\in[0,T]}$ is known to be almost surely constant \cite[Theorem~3.2]{ref:Pardoux-81}. Therefore
	\begin{equation*}
	 \begin{aligned}
	  \frac{\partial}{\partial t}\Ps(x,t) &= K(t)p(x,t) \frac{\partial}{\partial t} w(x,t) + K(t) w(x,t) \frac{\partial}{\partial t} p(x,t) \\
					      &= \frac{\Ps(x,t)}{w(x,t)}\left( -f(x)w^{\prime}(x,t)-\frac{1}{2}a(x)w^{\prime \prime} (x,t) - w(x,t) h(x)\transp \drv Y_{t} \right)\\
				              &+  w(x,t) \left( -\left( \frac{f(x)\Ps(x,t)}{w(x,t)} \right)^{\prime} + \frac{1}{2} \left( \frac{a(x)\Ps(x)}{w(x,t)}\right)^{\prime\prime} +\frac{\Ps(x,t)}{w(x,t)} h(x)\transp \drv Y_{t} \right).
	 \end{aligned}
	\end{equation*}
	The proof follows by a straightforward computation. We compute in a preliminary step
	\begin{equation*}
	\begin{aligned}
	 \left( \frac{f(x)\Ps(x,t)}{w(x,t)} \right)^{\prime} \!\!\!&=\!\frac{1}{w^2(x,t)}\!\left( \left( f^\prime(x)\Ps(x,t)\!+\!f(x)\Ps^\prime(x,t) \right)w(x,t)\right.\left.\!-\!f(x)\Ps(x,t)v^\prime(x,t) \right)\!,
	\end{aligned}
	\end{equation*}
	and
	\begin{align*}
	 &\left( \frac{a(x)\Ps(x,t)}{w(x,t)}\right)^{\prime\prime}
      = \frac{1}{w(x,t)} \left( a''(x)\Ps(x,t)+2a'(x)\Ps'(x,t)+a(x)\Ps''(x,t) \right) \\
	 &\quad\hspace{8mm} - \frac{1}{w^2(x,t)}\left( 2w'(x,t)a'(x)\Ps(x,t) + 2w'(x,t)a(x)\Ps'(x,t)\right.\left. +a(x)\Ps(x,t)w''(x,t) \right) \\
	 &\quad\hspace{8mm} +\frac{1}{w^3(x,t)}\left( 2a(x)\Ps(x,t)w'(x,t)^2 \right).
	\end{align*}
	Using this two preliminaries, we get
	\begin{align*}
	  &\frac{\partial}{\partial t}\Ps(x,t) \\
	  &=-f'(x)\Ps(x,t)-f(x)\Ps'(x,t)+a'(x)\Ps'(x,t)+ \frac{1}{2}(a''(x)\Ps(x,t)+a(x)\Ps''(x,t))\\ 
	  &\quad -\frac{1}{w(x,t)}\left(a'(x)w'(x,t)\Ps(x,t) +a(x)w''(x,t)\Ps(x,t)+a(x)w'(x,t)\Ps'(x,t)\right) \\
	  &\quad +\frac{1}{w^2(x,t)}a(x)w'(x,t)^2\Ps(x,t) \\
	      &= -\bigg( f'(x)\Ps(x,t)+f(x)\Ps'(x,t)+\frac{1}{w(x,t)}\big( a'(x)w'(x,t)\Ps(x,t) \\
	      &\quad +a(x)w''(x,t)\Ps(x,t)+a(x)w'(x,t)\Ps'(x,t)\big)-\frac{1}{w^2(x,t)}(a(x)w'(x,t)^2\Ps(x,t))\bigg)\\
	      &\quad +\frac{1}{2}\big(a'(x)\Ps(x,t)+a(x)\Ps'(x,t)\big)'\\
	      &=-\bigg( \left( f(x)+a(x)\frac{w'(x,t)}{w(x,t)}\right)\Ps(x,t)\bigg)'+\frac{1}{2}\big( a(x)\Ps(x,t) \big)'' \\
	      &=-\bigg( \left( f(x)+a(x)\big(\log w(x,t) \big)'\right)\Ps(x,t)\bigg)'+\frac{1}{2}\big( a(x)\Ps(x,t) \big)'',
	\end{align*}
and as such \eqref{eq:PDE:posterior2} holds.


	 \section{Proof of Theorem~\ref{thm:brigo_mixture_multi}} \label{app:proof}
	    Consider an arbitrary curve $t\mapsto p(\cdot,\Theta^{(1)}_t,\hdots,\Theta^{(k)}_t)$ evolving in $\text{EM}(c^{(1)},\hdots,c^{(k)})$. Define a diffusion
	   \begin{equation*} 
	   \drv Z_{t} = u(Z_{t},t)\drv t + \sigma(Z_{t}) \drv B_{t} , \quad Z_0=x_0,
	   \end{equation*} 
	   with the given diffusion coefficient $a(\cdot)=\sigma(\cdot)\sigma(\cdot)\transp$. Clearly the density of $Z_{t}$ coincides with $p(\cdot,\Theta^{(1)}_t,\hdots,\Theta^{(k)}_t)$ if and only if $p(\cdot,\Theta^{(1)}_t,\hdots,\Theta^{(k)}_t)$ satisfies the Kolmogorov forward equation for $Z_{t}$, i.e., 
	  \begin{equation} \label{e:app:proof:FPE}
	  \begin{aligned}
	   \frac{\partial p(x,\Theta^{(1)}_t,\hdots,\Theta^{(k)}_t)}{\partial t} &=-\sum_{i=1}^n\frac{\partial}{\partial x_i}\left(u_i(x,t)p(x,\Theta^{(1)}_t,\hdots,\Theta^{(k)}_t)\right)\\
	   &\hspace{8mm}+\frac{1}{2}\sum_{i=1}^n\sum_{j=1}^n\frac{\partial^2}{\partial x_i\partial x_j}\left( a_{ij}(x)p(x,\Theta^{(1)}_t,\hdots,\Theta^{(k)}_t)\right).
	   \end{aligned}
	  \end{equation}
	  We will show this in two steps that \eqref{e:app:proof:FPE} holds for the proposed drift term. Consider the decomposition $u_i(x,t)=g_i(x,t)+\gamma_i(x,t)$ for all $i=1,\hdots, n,$ where
	  \begin{equation} \label{e:app:def:g}
	    g_i(x,t):=\frac{1}{2}\sum_{j=1}^n\frac{\partial}{\partial x_j}a_{ij}(x) + \frac{1}{2}\sum_{j=1}^n a_{ij}(x) \frac{\frac{\partial}{\partial x_j}p(x,\Theta^{(1)}_t,\hdots,\Theta^{(k)}_t)}{p(x,\Theta^{(1)}_t,\hdots,\Theta^{(k)}_t)}  \vspace{-2mm}
	  \end{equation}
	  and  \vspace{-2mm}
	  \begin{equation} \label{e:app:def:gamma}
	   \begin{aligned}
	    \gamma_i(x,t) :=& \frac{-1}{p(x,\Theta^{(1)}_t,\hdots,\Theta^{(k)}_t)} \sum_{\ell=1}^k \nu_\ell p_\ell(x,\Theta^{(\ell)}_{t})\inprod{\dot{\Theta}^{(\ell)}_{t}}{\mathcal{I}_i^{(\ell)}(x)}.
	   \end{aligned}
	  \end{equation}
We use the shorthand notation $p(x,\Theta^{(1:k)}_t):=p(x,\Theta^{(1)}_t,\hdots,\Theta^{(k)}_t)$.
	  \begin{claim}\label{claim:1}
	  The functions $g_i$ defined in \eqref{e:app:def:g} for all $i=1,\hdots,n$ satisfy
	   \begin{equation*}
	   \sum_{i=1}^n\frac{\partial}{\partial x_i}\left( g_i(x,t)p(x,\Theta^{(1:k)}_t)\right)\!=\!\frac{1}{2}\sum_{i=1}^n\sum_{j=1}^n\frac{\partial^2}{\partial x_i\partial x_j}\left( a_{ij}(x)p(x,\Theta^{(1:k)}_t)\right).
	   \end{equation*}
	  \end{claim}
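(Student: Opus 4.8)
The plan is to verify the claimed identity by direct computation, exploiting the explicit form of $g_i$ in \eqref{e:app:def:g}. The key observation is that $g_i$ has exactly the structure of a drift that makes the stationary-type Fokker--Planck operator annihilate $p(x,\Theta^{(1:k)}_t)$: writing $p$ for $p(x,\Theta^{(1:k)}_t)$ throughout, the second term of $g_i$ is $\tfrac12\sum_j a_{ij}(x)\,\partial_{x_j}(\log p)$, so $g_i\,p = \tfrac12\sum_j \big(\partial_{x_j}a_{ij}(x)\big)p + \tfrac12\sum_j a_{ij}(x)\,\partial_{x_j}p$. First I would substitute this into the left-hand side $\sum_i \partial_{x_i}(g_i\,p)$ and expand.

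The main step is then an algebraic identity about the divergence-form operator. Observe that the right-hand side can be rewritten as
\begin{equation*}
\frac12\sum_{i,j}\frac{\partial^2}{\partial x_i\partial x_j}\big(a_{ij}p\big) = \frac12\sum_i \frac{\partial}{\partial x_i}\left(\sum_j \frac{\partial}{\partial x_j}\big(a_{ij}(x)p\big)\right) = \frac12\sum_i\frac{\partial}{\partial x_i}\left(\sum_j \big(\partial_{x_j}a_{ij}(x)\big)p + \sum_j a_{ij}(x)\,\partial_{x_j}p\right).
\end{equation*}
The expression inside the outer $\partial_{x_i}$ on the right is precisely $2 g_i(x,t)\,p$ by \eqref{e:app:def:g}. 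Hence both sides equal $\sum_i \partial_{x_i}(g_i p)$ and the claim follows. So really the proof amounts to recognizing that $g_i$ was \emph{defined} so that $g_i p = \tfrac12\sum_j \partial_{x_j}(a_{ij}p)$, and then the identity is just the trivial rewriting of the second-order operator $\tfrac12\sum_{i,j}\partial_{x_i}\partial_{x_j}(a_{ij}\,\cdot\,)$ as $\tfrac12\sum_i\partial_{x_i}\big(\sum_j\partial_{x_j}(a_{ij}\,\cdot\,)\big)$.

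I do not anticipate a genuine obstacle here; the only care needed is bookkeeping. One should note the smoothness/growth hypotheses that make the manipulations legitimate: $a_{ij}=\sigma\sigma^\top$ inherits (local) $C^1$ regularity and the polynomial-growth assumption on $\sigma$ (Assumption~\ref{ass:brigo:thm}), and $p(\cdot,\Theta^{(1:k)}_t)$ is a smooth, rapidly decaying density (a finite convex combination of exponential-family densities with the $c^{(\ell)}$ of polynomial growth), so all the products and partial derivatives appearing above are well-defined and the order of differentiation may be interchanged. With that in place the identity is pointwise and requires no integration by parts. The subsequent Claim (not stated in this excerpt) will presumably handle the $\gamma_i$ part, showing $\sum_i\partial_{x_i}(\gamma_i p) = -\partial_t p$, and the two claims together give \eqref{e:app:proof:FPE}.
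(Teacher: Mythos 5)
Your proof is correct and is precisely the ``straightforward computation'' the paper alludes to (the paper defers the details to an extended version): the product rule gives $g_i(x,t)\,p(x,\Theta^{(1:k)}_t)=\tfrac12\sum_{j=1}^n\frac{\partial}{\partial x_j}\bigl(a_{ij}(x)\,p(x,\Theta^{(1:k)}_t)\bigr)$, and applying $\sum_{i=1}^n\frac{\partial}{\partial x_i}$ to both sides yields the claimed identity. Nothing further is needed.
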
Claim~\ref{claim:1} follows from a straightforward computation, see Appendix~B in the extended version \cite{ref:SutterVar-15} for a detailed derivation.
	  \begin{claim}
	   The functions $\gamma_i$ defined in \eqref{e:app:def:gamma} for all $i=1,\hdots,n$ satisfy
	   \begin{equation*}
	    \frac{\partial}{\partial t}p(x,\Theta^{(1:k)}_t)= -\sum_{i=1}^n \frac{\partial}{\partial x_i}\bigg( \gamma_i(x,t)p(x,\Theta^{(1:k)}_t) \bigg).
	   \end{equation*}
	  \end{claim}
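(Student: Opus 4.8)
The plan is to reduce the claimed identity, by linearity of the mixture and of the inner product, to a single component density $p_\ell$, and then to verify it by a short computation combining the Leibniz rule with the defining relation \eqref{eq:cond_varphi_thm_multi_brigo} of the functions $\varphi_i^{(\ell)}$.

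First I would use $p(x,\Theta^{(1:k)}_t)=\sum_{\ell=1}^k\nu_\ell p_\ell(x,\Theta^{(\ell)}_{t})$ together with the definition \eqref{e:app:def:gamma} of $\gamma_i$ to rewrite $\gamma_i(x,t)\,p(x,\Theta^{(1:k)}_t)=-\sum_{\ell}\nu_\ell\,p_\ell(x,\Theta^{(\ell)}_{t})\inprod{\dot{\Theta}^{(\ell)}_{t}}{\mathcal{I}_i^{(\ell)}(x)}$; the claim then splits into a sum over $\ell$, so it suffices to show for each $\ell$ that $\frac{\partial}{\partial t}p_\ell(x,\Theta^{(\ell)}_{t})=\sum_{i=1}^n\frac{\partial}{\partial x_i}\big(p_\ell(x,\Theta^{(\ell)}_{t})\inprod{\dot{\Theta}^{(\ell)}_{t}}{\mathcal{I}_i^{(\ell)}(x)}\big)$. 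The left-hand side is immediate from the exponential-family form of $p_\ell$ and the chain rule along the $C^1$ curve $t\mapsto\Theta^{(\ell)}_{t}$:
\[
\frac{\partial}{\partial t}p_\ell(x,\Theta^{(\ell)}_{t})=p_\ell(x,\Theta^{(\ell)}_{t})\,\inprod{\dot{\Theta}^{(\ell)}_{t}}{c^{(\ell)}(x)-\nabla_{\Theta}\psi_\ell(\Theta^{(\ell)}_{t})}.
\]

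For the right-hand side, the key observation is the algebraic collapse $p_\ell(x,\Theta)\exp\!\big(\inprod{\Theta}{c^{(\ell)}(x_{-i},\xi_i)-c^{(\ell)}(x)}\big)=p_\ell((x_{-i},\xi_i),\Theta)$, which, after moving the continuous linear functional $\inprod{\dot{\Theta}^{(\ell)}_{t}}{\cdot}$ inside, turns $p_\ell(x,\Theta^{(\ell)}_{t})\,\inprod{\dot{\Theta}^{(\ell)}_{t}}{\mathcal{I}_i^{(\ell)}(x)}$ into $\int_{-\infty}^{x_i}\inprod{\dot{\Theta}^{(\ell)}_{t}}{\varphi_i^{(\ell)}((x_{-i},\xi_i),\Theta^{(\ell)}_{t})}\,p_\ell((x_{-i},\xi_i),\Theta^{(\ell)}_{t})\,\drv\xi_i$. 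Since this integrand depends on $x$ only through $x_{-i}$, the variable $x_i$ enters solely via the upper limit, so $\frac{\partial}{\partial x_i}$ of it equals, by the Leibniz rule, the integrand evaluated at $\xi_i=x_i$, namely $\inprod{\dot{\Theta}^{(\ell)}_{t}}{\varphi_i^{(\ell)}((x_{-i},\xi_i),\Theta^{(\ell)}_{t})}\big|_{\xi_i=x_i}\,p_\ell(x,\Theta^{(\ell)}_{t})$. Summing over $i=1,\dots,n$ and invoking \eqref{eq:cond_varphi_thm_multi_brigo} reproduces exactly the right-hand side of the displayed formula for $\frac{\partial}{\partial t}p_\ell$; multiplying by $\nu_\ell$ and summing over $\ell$ closes the claim, and together with Claim~\ref{claim:1} this yields \eqref{e:app:proof:FPE} and hence Theorem~\ref{thm:brigo_mixture_multi}.

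I expect the genuine work to be analytic bookkeeping rather than conceptual: one must check that the integral defining $\mathcal{I}_i^{(\ell)}$ converges and that the Leibniz differentiation under the integral sign is legitimate. Both follow from the standing hypotheses — $c^{(\ell)}$ twice continuously differentiable with at most polynomial growth, $\Theta^{(\ell)}_{t}$ in the open parameter set $\Lambda$ so that $p_\ell(\cdot,\Theta^{(\ell)}_{t})$ is a probability density with finite moments of every order, and $t\mapsto\Theta^{(\ell)}_{t}$ of class $C^1$ — which make the partial integral of $p_\ell$ against the polynomially bounded factor $\inprod{\dot{\Theta}^{(\ell)}_{t}}{\varphi_i^{(\ell)}}$ finite and continuously differentiable in $x_i$; the interchange of $\inprod{\dot{\Theta}^{(\ell)}_{t}}{\cdot}$ with the integral is likewise routine, the inner product being continuous on $\Hi$.
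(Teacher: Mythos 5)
Your proof is correct and follows essentially the same route as the paper's: the collapse $p_\ell(x,\Theta)\exp(\inprod{\Theta}{c^{(\ell)}(x_{-i},\xi_i)-c^{(\ell)}(x)})=p_\ell((x_{-i},\xi_i),\Theta)$, the Leibniz rule applied to an integral whose $x_i$-dependence sits only in the upper limit, and the closing appeal to \eqref{eq:cond_varphi_thm_multi_brigo} are exactly the paper's key steps. The only (harmless, indeed slightly cleaner) difference is organizational: you cancel the denominator in $\gamma_i p$ before differentiating, whereas the paper computes $\partial_{x_i}\gamma_i$ by the quotient rule and then lets the extra term cancel against $\gamma_i\,\partial_{x_i}p$ in the product rule.
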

	  \indent\textit{Proof.} \vspace{-3mm}
	  \begin{align*}
	   \frac{\partial}{\partial t}p(x,\Theta^{(1:k)}_t) &= \sum_{\ell=1}^k \nu_\ell \frac{\partial}{\partial t}p_\ell(x,\Theta^{(\ell)}_{t})=\sum_{\ell=1}^k \nu_\ell \inprod{\dot{\Theta}^{(\ell)}_{t}}{c^{(\ell)}(x)-\nabla_{\theta}\psi_\ell(\Theta^{(\ell)}_{t})}p_\ell(x,\Theta^{(\ell)}_{t}).
	  \end{align*}
	  Moreover, \vspace{-2mm}
	  \begin{align*}
	   &\frac{\partial}{\partial x_i}\gamma_i(x,t) = \frac{1}{p(x,\Theta^{(1:k)}_t)^2}\left(\frac{\partial}{\partial x_i}p(x,\Theta^{(1:k)}_t)\right)  \sum_{\ell=1}^k \nu_\ell p_\ell(x,\Theta^{(\ell)}_{t})\inprod{\dot{\Theta}^{(\ell)}_{t}}{\mathcal{I}_i^{(\ell)}(x)} \\
	   &\hspace{8mm}- \frac{1}{p(x,\Theta^{(1:k)}_t)}\sum_{\ell=1}^k \nu_\ell \inprod{\dot{\Theta}^{(\ell)}_{t}}{\varphi_i^{(\ell)}(x,\Theta^{(\ell)}_{t})\exp[\inprod{\Theta^{(\ell)}_{t}}{c^{(\ell)}(x)}-\psi_\ell(\Theta^{(\ell)}_{t})]},
	  \end{align*}
	  where we used
	  \begin{align*}
	   &p_\ell(x,\Theta^{(\ell)}_{t}) \inprod{\dot{\Theta}^{(\ell)}_{t}}{\mathcal{I}_i^{(\ell)}} \\
	    &\hspace{8mm}=\inprod{\dot{\Theta}^{(\ell)}_{t}}{\int_{-\infty}^{x_i}\varphi_i^{(\ell)}((x_{-i},\xi_i),\Theta^{(\ell)}_{t})\exp[\inprod{\Theta^{(\ell)}_{t}}{c^{(\ell)}(x_{-i},\xi_i)}-\psi_\ell(\Theta^{(\ell)}_{t})]\drv \xi_i}.
	  \end{align*}
	  Therefore, \vspace{-2mm}
	  \begin{align*}
	    \frac{\partial}{\partial x_i}\bigg( \gamma_i(x,t)p(x,\Theta^{(1:k)}_t) \bigg) &= \left( \frac{\partial}{\partial x_i}\gamma_i(x,t)\right)p(x,\Theta^{(1:k)}_t)  + \gamma_i(x,t) \left( \frac{\partial}{\partial x_i}p(x,\Theta^{(1:k)}_t)\right) \\
	    &=-\sum_{\ell=1}^k \nu_\ell \inprod{\dot{\Theta}^{(\ell)}_{t}}{\varphi_i^{(\ell)}(x,\Theta^{(\ell)}_{t})}p_\ell(x,\Theta^{(\ell)}_{t}), 
	  \end{align*}
	  and \vspace{-2mm}
	  \begin{align*}
	   &-\sum_{i=1}^n \frac{\partial}{\partial x_i}\bigg( \gamma_i(x,t)p(x,\Theta^{(1:k)}_t) \bigg) 
	   = \sum_{\ell=1}^k \nu_\ell p_\ell(x,\Theta^{(\ell)}_{t}) \left( \sum_{i=1}^n \inprod{\dot{\Theta}^{(\ell)}_{t}}{\varphi_i^{(\ell)}(x,\Theta^{(\ell)}_{t})} \right)\\
	   &\qquad = \sum_{\ell=1}^k \nu_\ell p_\ell(x,\Theta^{(\ell)}_{t}) \inprod{\dot{\Theta}^{(\ell)}_{t}}{c^{(\ell)}(x)-\nabla_{\theta} \psi_\ell(\Theta^{(\ell)}_{t})}
	   = \frac{\partial}{\partial t}p(x,\Theta^{(1:k)}_t),
	  \end{align*}
	  where we used \eqref{eq:cond_varphi_thm_multi_brigo}. \qquad\endproof \\
	 The two claims imply \eqref{e:app:proof:FPE} and hence complete the proof.\qquad\endproof 

	  \section{Proof of Proposition~\ref{prop:drift:fct}} \label{app:proof_drift_fct}
	  The proof basically requires Theorem~\ref{thm:brigo_mixture_multi} and two additional lemmas. We first propose in Lemma~\ref{lemma:choose_varphi} a choice of functions $\varphi_i^{(\ell)}$ that satisfy \eqref{eq:cond_varphi_thm_multi_brigo} in Theorem~\ref{thm:brigo_mixture_multi}. Then we show in a second step, in Lemma~\ref{lemma:integrals_multi_gauss}, that for this choice the integral terms \eqref{e:thm:integral} admit a closed form expression.
	   We start with a few preparatory results that are needed to prove Proposition~\ref{prop:drift:fct}.
	   Note that $\nabla_\Theta \psi(\Theta)=(\nabla_{\eta} \psi(\Theta),\nabla_{\theta} \psi(\Theta))\in \Hi$ and recall that according to \cite[p.631]{ref:Ber-09} for $A\in\R^{n\times m}$, $B\in\R^{m \times n}$ and $X\in \text{GL}(n,\R)$
	   $\frac{\drv}{\drv X}\tr \left( AX^{-1}B \right) =-X^{-1}BAX^{-1}$ and $\frac{\drv}{\drv X}\log \det \left( AX^{-1}B \right) =  -X^{-1}B(AX^{-1}B)^{-1}AX^{-1}$
	  and therefore
	  \begin{align}
	   \nabla_{\eta} \psi(\Theta) = -\frac{1}{2}\theta^{-1}\eta, \qquad   \nabla_{\theta} \psi(\Theta) = \theta^{-1}\left( \frac{1}{4}\eta \eta\transp \theta^{-1}-\frac{1}{2} \In \right). \label{eq:derivative:psi:theta}
	  \end{align}

 \begin{lemma} \label{lemma:choose_varphi}
	 Consider the functions $\varphi_i^{(\ell)}: \R^n\times \Hi \to \Hi$, where $\varphi_i^{(\ell)}=\left(\varphi_{1,i}^{(\ell)},\varphi_{2,i}^{(\ell)}\right)$ with $\varphi_{1,i}^{(\ell)}:\R^n\times \Hi \to \R^n$ and $\varphi_{2,i}^{(\ell)}:\R^n\times \Hi \to \R^{n\times n}$ for $\ell=1,\hdots,k$. Let $\varphi_{1,i}^{(\ell)}$ and $\varphi_{2,i}^{(\ell)}$ be defined as
	 \begin{align*}
	   \varphi_{1,i}^{(\ell)}((x_{-i},\xi_i),\Theta^{(\ell)}_{t}) &:= {\theta^{(\ell)}_{t}}^{-1}E_i\theta^{(\ell)}_{t} ( c^{(\ell)}_{1}(x_{-i},\xi_i)-\nabla_{\eta} \psi_\ell(\Theta^{(\ell)}_{t}) )\\
	   \varphi_{2,i}^{(\ell)}((x_{-i},\xi_i),\Theta^{(\ell)}_{t}) &:= {\theta^{(\ell)}_{t}}^{-1}E_i \bigg( \theta^{(\ell)}_{t} (c^{(\ell)}_{2}(x_{-i},\xi_i)-\nabla_{\theta} \psi_\ell(\Theta^{(\ell)}_{t}) ) \Theta^{(\ell)}_{t} \\ 
	   &\hspace{24mm}- \frac{1}{2}\theta^{(\ell)}_{t} (x_{-i},\xi_i){\eta^{(\ell)}_{t}}\transp+ \frac{1}{2}\eta^{(\ell)}_{t} (x_{-i},\xi_i)\transp \theta^{(\ell)}_{t} \bigg) {\theta^{(\ell)}_{t}}^{-1}. 
	  \end{align*}
	 	 Then, \eqref{eq:cond_varphi_thm_multi_brigo} holds for all $\ell=1,\hdots,k$.
	 \end{lemma}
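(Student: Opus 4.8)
The plan is to verify \eqref{eq:cond_varphi_thm_multi_brigo} directly for the proposed $\varphi_i^{(\ell)}=(\varphi_{1,i}^{(\ell)},\varphi_{2,i}^{(\ell)})$. I would fix $\ell$ and, throughout, suppress the superscript $(\ell)$ and the time subscript $t$, writing $\Theta=(\eta,\theta)$, $\dot\Theta=(\dot\eta,\dot\theta)$, and recalling that the inner product on $\Hi=\R^n\times\R^{n\times n}$ is $\inprod{(a,A)}{(b,B)}=a\transp b+\tr(A\transp B)$. Since \eqref{eq:cond_varphi_thm_multi_brigo} only involves $\varphi_i$ evaluated at $\xi_i=x_i$, where $(x_{-i},\xi_i)$ collapses to the vector $x$, the goal reduces to showing
\[
\sum_{i=1}^n\inprod{\dot\Theta}{\varphi_i(x,\Theta)}=\dot\eta\transp\big(c_1(x)-\nabla_\eta\psi\big)+\tr\!\big(\dot\theta\transp(c_2(x)-\nabla_\theta\psi)\big)=\inprod{\dot\Theta}{c(x)-\nabla_\Theta\psi(\Theta)}.
\]

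The main device will be a telescoping over the elementary matrices $E_i$. Using $\sum_{i=1}^nE_i=\In$ together with linearity of matrix multiplication, the first components collapse to $\sum_i\varphi_{1,i}(x,\Theta)=\theta^{-1}\big(\sum_iE_i\big)\theta\,(c_1(x)-\nabla_\eta\psi)=c_1(x)-\nabla_\eta\psi$, so $\sum_i\dot\eta\transp\varphi_{1,i}=\dot\eta\transp(c_1(x)-\nabla_\eta\psi)$, which already produces the first summand. Running the same telescoping on the second components, the $E_i$'s again collapse to $\In$ and I expect to reach
\[
\sum_{i=1}^n\varphi_{2,i}(x,\Theta)=\theta^{-1}\Big(\theta(c_2(x)-\nabla_\theta\psi)\theta-\tfrac12\theta x\eta\transp+\tfrac12\eta x\transp\theta\Big)\theta^{-1}=\big(c_2(x)-\nabla_\theta\psi\big)+\tfrac12\big(\theta^{-1}\eta x\transp-x\eta\transp\theta^{-1}\big).
\]

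The last step — the only one that is not mere bookkeeping — will be to discard the surplus term $K:=\tfrac12(\theta^{-1}\eta x\transp-x\eta\transp\theta^{-1})$. Since $\theta=-\tfrac12S^{-1}$ with $S\in\sym(n,\R)$, the matrix $\theta^{-1}$ is symmetric, and $\dot\theta$ is symmetric as the derivative of the symmetric matrix-valued map $t\mapsto\theta$; meanwhile $K\transp=-K$, so $\tr(\dot\theta\transp K)=\tr(\dot\theta K)=0$, because the trace of a product of a symmetric and a skew-symmetric matrix vanishes. Pairing both displays with $\dot\Theta$ then gives precisely $\inprod{\dot\Theta}{c(x)-\nabla_\Theta\psi(\Theta)}$, which is \eqref{eq:cond_varphi_thm_multi_brigo}. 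I would also remark that the explicit formulas \eqref{eq:derivative:psi:theta} for $\nabla_\eta\psi$ and $\nabla_\theta\psi$ play no role in this particular verification — they are needed only afterwards, when the integrals $\mathcal{I}_i^{(\ell)}$ are evaluated in closed form (Lemma~\ref{lemma:integrals_multi_gauss}) — so the argument is essentially insensitive to the specific choice of sufficient statistics.
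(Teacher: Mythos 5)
Your proposal is correct and follows essentially the same route as the paper's proof: telescoping over the elementary matrices via $\sum_i E_i = \In$, and then killing the surplus term $\tfrac12(\theta^{-1}\eta x\transp - x\eta\transp\theta^{-1})$ by pairing the symmetric $\dot\theta$ against a skew-symmetric matrix (the paper phrases this as $\inprod{\dot\theta}{x\eta\transp\theta^{-1}}=\inprod{\dot\theta}{\theta^{-1}\eta x\transp}$ via $\tr(AB\transp)=\tr(AB)$ for symmetric $A$, which is the same observation). Your remark that the explicit formulas \eqref{eq:derivative:psi:theta} are not needed here is also consistent with the paper, which only invokes them later in Lemma~\ref{lemma:integrals_multi_gauss}.
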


 {\em Proof of Lemma~\ref{lemma:choose_varphi}.}
	  According to \eqref{eq:multivar:normal:parameters} we have
	  \begin{equation*}
	   \sum_{i=1}^n \inprod{\dot{\Theta}^{(\ell)}_{t}}{\varphi_i^{(\ell)}(x,\Theta^{(\ell)}_{t}) }= \sum_{i=1}^n \left( \inprod{\dot{\eta}_{t}^{(\ell)} {}}{\varphi_{1,i}^{(\ell)}(x,\Theta^{(\ell)}_{t})} + \inprod{\dot{\theta}^{(\ell)}_{t}}{\varphi_{2,i}^{(\ell)}(x,\Theta^{(\ell)}_{t})} \right),  \vspace{-2mm}
	  \end{equation*}
	  consisting of the two components  \vspace{-2mm}
	  \begin{align*}
	   &\sum_{i=1}^n  \inprod{\dot{\eta}_{t}^{(\ell)} {}}{\varphi_{1,i}^{(\ell)}(x,\Theta^{(\ell)}_{t})} = \sum_{i=1}^n  \inprod{\dot{\eta}_{t}^{(\ell)} {}}{{\theta^{(\ell)}_{t}}^{-1}E_i\theta^{(\ell)}_{t} ( c_{1}^{(\ell)}(x)-\nabla_{\eta} \psi_\ell(\Theta^{(\ell)}_{t}) )}\\
	    &\qquad = \inprod{\dot{\eta}_{t}^{(\ell)} {}}{\sum_{i=1}^n {\theta^{(\ell)}_{t}}^{-1}E_i\theta^{(\ell)}_{t} ( c_{1}^{(\ell)}(x)-\nabla_{\eta} \psi_\ell(\Theta^{(\ell)}_{t}) )} 
	    = \inprod{\dot{\eta}_{t}^{(\ell)} {}}{c_{1}^{(\ell)}(x)-\nabla_{\eta} \psi_\ell(\Theta^{(\ell)}_{t})}
	  \end{align*}
	  and
	  \begin{align*}
	    &\sum_{i=1}^n  \inprod{\dot{\theta}^{(\ell)}_{t}}{\varphi_{2,i}^{(\ell)}(x,\Theta^{(\ell)}_{t})}
= \bigg\langle \dot{\theta}^{(\ell)}_{t}, \sum_{i=1}^n \bigg( {\theta^{(\ell)}_{t}}^{-1}E_i  \theta^{(\ell)}_{t} (c_{2}^{(\ell)}(x)-\nabla_{\theta} \psi_\ell(\Theta^{(\ell)}_{t}) ) \\
	     &\hspace{16mm} - \frac{1}{2}{\theta^{(\ell)}_{t}}^{-1}E_i\theta^{(\ell)}_{t} x{\eta^{(\ell)}_{t}}\transp{\theta^{(\ell)}_{t}}^{-1} + \frac{1}{2}{\theta^{(\ell)}_{t}}^{-1}E_i{\eta^{(\ell)}_{t}} x\transp\bigg)\bigg\rangle \\
	     &\hspace{8mm}=\inprod{\dot{\theta}^{(\ell)}_{t}}{c_{2}^{(\ell)}(x)-\nabla_{\theta} \psi_\ell(\Theta^{(\ell)}_{t})}-\frac{1}{2}\inprod{\dot{\theta}^{(\ell)}_{t}}{x{\eta^{(\ell)}_{t}}\transp {\theta^{(\ell)}_{t}}^{-1}} + \frac{1}{2}\inprod{\dot{\theta}^{(\ell)}_{t}}{{\theta^{(\ell)}_{t}}^{-1}{\eta^{(\ell)}_{t}} x\transp}\\
	     &\hspace{8mm}=\inprod{\dot{\theta}^{(\ell)}_{t}}{c_{2}^{(\ell)}(x)-\nabla_{\theta} \psi_\ell(\Theta^{(\ell)}_{t})},
	  \end{align*}
	  where  we have used in the last step that
	     	     $\inprod{\dot{\theta}^{(\ell)}_{t}}{x{\eta^{(\ell)}_{t}}\transp {\theta^{(\ell)}_{t}}^{-1}} =\inprod{\dot{\theta}^{(\ell)}_{t}}{{\theta^{(\ell)}_{t}}^{-1}{\eta^{(\ell)}_{t}} x\transp}$,
	    since for $A\in\sym(n,\R)$ and $B\in \R^{n\times n}$
	    $ \tr(AB\transp)=\tr(AB).$ 
	 \qquad\endproof
	  \begin{lemma} \label{lemma:integrals_multi_gauss}
	   For $i=1,\hdots,n,$ $j=1,2$ and $\ell=1,\hdots,k$ consider
	    \begin{equation*}
	   \mathcal{I}_{j,i}^{(\ell)}(s_i,x):=\int_{-\infty}^{s_i} \varphi_{j,i}^{(\ell)}( (x_{-i},\xi_i),\Theta^{(\ell)}_{t} )\exp\left( \inprod{\Theta^{(\ell)}_{t}}{c^{(\ell)}(x_{-i},\xi_i)-c^{(\ell)}(x)} \right)\drv \xi_i,
	  \end{equation*}
	  where the functions $\varphi_{j,i}^{(\ell)}$ are chosen according to Lemma \ref{lemma:choose_varphi}. Then,
	  	   \begin{align*}
	    \mathcal{I}_{1,i}^{(\ell)}(s_i,x) &= \frac{1}{2}{\theta^{(\ell)}_{t}}^{-1} e_i\exp\left( \inprod{\Theta^{(\ell)}_{t}}{c^{(\ell)}(x_{-i},s_i)-c^{(\ell)}(x)} \right) \\
	    \mathcal{I}_{2,i}^{(\ell)}(x_i,x) &= \frac{1}{4}{\theta^{(\ell)}_{t}}^{-1}e_i(2\theta^{(\ell)}_{t} x-\eta^{(\ell)}_{t})\transp{\theta^{(\ell)}_{t}}^{-1}.
	   \end{align*}
	  \end{lemma}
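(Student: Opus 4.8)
The plan is to recognise each integrand as an exact derivative in $\xi_i$, so that $\mathcal{I}^{(\ell)}_{j,i}$ collapses to a boundary term; the endpoint at $\xi_i=-\infty$ will contribute nothing because $\theta^{(\ell)}_t=-\tfrac12{S^{(\ell)}_t}^{-1}$ is negative definite, hence $\exp(\inprod{\Theta^{(\ell)}_t}{c^{(\ell)}(x_{-i},\xi_i)})$, and any polynomial-in-$\xi_i$ multiple of it, tends to $0$ as $\xi_i\to-\infty$. Everything rests on one elementary fact for the Gaussian family: since $c^{(\ell)}_1(x)=x$ and $c^{(\ell)}_2(x)=xx\transp$, one has $\partial_{\xi_i}c^{(\ell)}(x_{-i},\xi_i)=\big(e_i,\,e_i(x_{-i},\xi_i)\transp+(x_{-i},\xi_i)e_i\transp\big)$, so that, using the symmetry of $\theta^{(\ell)}_t$,
\[
\frac{\partial}{\partial\xi_i}\exp\!\big(\inprod{\Theta^{(\ell)}_t}{c^{(\ell)}(x_{-i},\xi_i)}\big)=\big((\eta^{(\ell)}_t)_i+2(\theta^{(\ell)}_t(x_{-i},\xi_i))_i\big)\exp\!\big(\inprod{\Theta^{(\ell)}_t}{c^{(\ell)}(x_{-i},\xi_i)}\big).
\]

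First I would put the functions $\varphi^{(\ell)}_{j,i}$ of Lemma~\ref{lemma:choose_varphi} into a form matched to this identity. Using $\Ei v=v_ie_i$ together with the gradients \eqref{eq:derivative:psi:theta} (i.e. $\nabla_\eta\psi_\ell=-\tfrac12{\theta^{(\ell)}_t}^{-1}\eta^{(\ell)}_t$ and $\nabla_\theta\psi_\ell=\tfrac14{\theta^{(\ell)}_t}^{-1}\eta^{(\ell)}_t{\eta^{(\ell)}_t}\transp{\theta^{(\ell)}_t}^{-1}-\tfrac12{\theta^{(\ell)}_t}^{-1}$), and completing the square in the matrix defining $\varphi^{(\ell)}_{2,i}$ (it equals $(\theta^{(\ell)}_t y+\tfrac12\eta^{(\ell)}_t)(\theta^{(\ell)}_t y-\tfrac12\eta^{(\ell)}_t)\transp+\tfrac12\theta^{(\ell)}_t$ up to conjugation), one obtains with $y:=(x_{-i},\xi_i)$
\[
\varphi^{(\ell)}_{1,i}(y,\Theta^{(\ell)}_t)=\big((\theta^{(\ell)}_t y)_i+\tfrac12(\eta^{(\ell)}_t)_i\big){\theta^{(\ell)}_t}^{-1}e_i,
\]
\[
\varphi^{(\ell)}_{2,i}(y,\Theta^{(\ell)}_t)=\big((\theta^{(\ell)}_t y)_i+\tfrac12(\eta^{(\ell)}_t)_i\big){\theta^{(\ell)}_t}^{-1}e_i(\theta^{(\ell)}_t y-\tfrac12\eta^{(\ell)}_t)\transp{\theta^{(\ell)}_t}^{-1}+\tfrac12{\theta^{(\ell)}_t}^{-1}e_ie_i\transp.
\]

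Next I would simply write down the antiderivatives and verify them by differentiation. Set $G_1(\xi_i):=\tfrac12{\theta^{(\ell)}_t}^{-1}e_i\exp(\inprod{\Theta^{(\ell)}_t}{c^{(\ell)}(x_{-i},\xi_i)})$ and $G_2(\xi_i):=\tfrac14{\theta^{(\ell)}_t}^{-1}e_i(2\theta^{(\ell)}_t(x_{-i},\xi_i)-\eta^{(\ell)}_t)\transp{\theta^{(\ell)}_t}^{-1}\exp(\inprod{\Theta^{(\ell)}_t}{c^{(\ell)}(x_{-i},\xi_i)})$. The displayed identity gives $\partial_{\xi_i}G_1=\varphi^{(\ell)}_{1,i}(y,\Theta^{(\ell)}_t)\exp(\inprod{\Theta^{(\ell)}_t}{c^{(\ell)}(x_{-i},\xi_i)})$, and the product rule applied to $G_2$ yields a term $\tfrac12{\theta^{(\ell)}_t}^{-1}e_ie_i\transp\exp(\cdots)$ from differentiating the linear-in-$\xi_i$ factor and a term equal to the rank-one part of $\varphi^{(\ell)}_{2,i}$ from differentiating the exponential, so $\partial_{\xi_i}G_2=\varphi^{(\ell)}_{2,i}(y,\Theta^{(\ell)}_t)\exp(\inprod{\Theta^{(\ell)}_t}{c^{(\ell)}(x_{-i},\xi_i)})$. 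Since $\exp(-\inprod{\Theta^{(\ell)}_t}{c^{(\ell)}(x)})$ does not depend on $\xi_i$, we then have $\mathcal{I}^{(\ell)}_{j,i}(s_i,x)=e^{-\inprod{\Theta^{(\ell)}_t}{c^{(\ell)}(x)}}\big(G_j(s_i)-\lim_{\xi_i\to-\infty}G_j(\xi_i)\big)$; the limit is $0$ by negative definiteness of $\theta^{(\ell)}_t$, and evaluating $G_1$ at $s_i$, resp.\ $G_2$ at $s_i=x_i$ (where $(x_{-i},x_i)=x$ makes the two exponentials cancel), produces exactly the two claimed closed forms.

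I expect the only genuinely delicate step to be the $j=2$ bookkeeping: recognising the combination of $\theta^{(\ell)}_t c^{(\ell)}_2(y)\theta^{(\ell)}_t$, $\theta^{(\ell)}_t\nabla_\theta\psi_\ell\,\theta^{(\ell)}_t$ and the two correction terms inside $\varphi^{(\ell)}_{2,i}$ as the perfect square $(\theta^{(\ell)}_t y+\tfrac12\eta^{(\ell)}_t)(\theta^{(\ell)}_t y-\tfrac12\eta^{(\ell)}_t)\transp$ plus $\tfrac12\theta^{(\ell)}_t$, carefully pushing the action of $\Ei$ (extract $i$-th row/column, using symmetry of $\theta^{(\ell)}_t$) and the outer inverses ${\theta^{(\ell)}_t}^{-1}$ through, and matching the product-rule expansion of $G_2$ term by term. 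The $j=1$ case and the endpoint/limit arguments are routine.
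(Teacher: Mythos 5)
Your proposal is correct and follows essentially the same route as the paper: both arguments hinge on the same algebraic identity (the quadratic matrix in $\varphi^{(\ell)}_{2,i}$ factors as $(\theta^{(\ell)}_t y+\tfrac12\eta^{(\ell)}_t)(\theta^{(\ell)}_t y-\tfrac12\eta^{(\ell)}_t)\transp+\tfrac12\theta^{(\ell)}_t$), recognize the integrands as exact $\xi_i$-derivatives, and use negative definiteness of $\theta^{(\ell)}_t$ to kill the boundary term at $-\infty$. The only difference is organizational: the paper handles $j=2$ by integrating by parts against $\mathcal{I}^{(\ell)}_{1,i}$, whereas you verify the claimed antiderivative $G_2$ directly by the product rule — an equivalent (and arguably cleaner) bookkeeping of the same computation.
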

 Note that $\mathcal{I}_{i}^{(\ell)}(x)=(\mathcal{I}_{1,i}^{(\ell)}(x_i,x),\mathcal{I}_{2,i}^{(\ell)}(x_i,x))$, where $\mathcal{I}_{i}^{(\ell)}(x)$ is the function defined in \eqref{e:thm:integral} and $e_{i}$ denote the canonical basis vectors of $R^{n}$.\\
	  {\em Proof of Lemma \ref{lemma:integrals_multi_gauss}.}
	  \begin{align*}
	    &\mathcal{I}_{1,i}^{(\ell)}(s_i,x) = \int_{-\infty}^{s_i} \varphi_{1,i}^{(\ell)}( (x_{-i},\xi_i),\Theta^{(\ell)}_{t} )\exp\left( \inprod{\Theta^{(\ell)}_{t}}{c^{(\ell)}(x_{-i},\xi_i)-c^{(\ell)}(x)} \right)\drv \xi_i \\
	     &= \frac{1}{2}{\theta^{(\ell)}_{t}}^{-1} \int_{-\infty}^{s_i} E_i2\theta^{(\ell)}_{t} ( c_{1}^{(\ell)}(x_{-i},\xi_i)-\nabla_{\eta} \psi_\ell(\Theta^{(\ell)}_{t}) )\exp\left( \inprod{\Theta^{(\ell)}_{t}}{c^{(\ell)}(x_{-i},\xi_i)-c^{(\ell)}(x)} \right)\drv \xi_i \\
	     &= \frac{1}{2}{\theta^{(\ell)}_{t}}^{-1}  \int_{-\infty}^{s_i}E_i2\theta^{(\ell)}_{t} \left( (x_{-i},\xi_i)+\frac{1}{2}{\theta^{(\ell)}_{t}}^{-1}{\eta^{(\ell)}_{t}} \right)\exp\left( \inprod{\Theta^{(\ell)}_{t}}{c^{(\ell)}(x_{-i},\xi_i)-c^{(\ell)}(x)} \right)\drv \xi_i \\
		&=\frac{1}{2}{\theta^{(\ell)}_{t}}^{-1}  \int_{-\infty}^{s_i}E_i\left( 2\theta^{(\ell)}_{t}  (x_{-i},\xi_i)+{\eta^{(\ell)}_{t}} \right)\exp\left( \inprod{\Theta^{(\ell)}_{t}}{c^{(\ell)}(x_{-i},\xi_i)-c^{(\ell)}(x)} \right)\drv \xi_i,
	  \end{align*}
	  where \eqref{eq:derivative:psi:theta} was used. Consider the substitution $z:= \inprod{\Theta^{(\ell)}_{t}}{c^{(\ell)}(x_{-i},\xi_i)-c^{(\ell)}(x)}$
	  that leads to
	   \begin{align*}
	   &\mathcal{I}_{1,i}^{(\ell)}(s_i,x)  
	           = \frac{1}{2}{\theta^{(\ell)}_{t}}^{-1} e_i\exp\left( \inprod{\Theta^{(\ell)}_{t}}{c^{(\ell)}(x_{-i},s_i)-c^{(\ell)}(x)} \right),
	   \end{align*}
	  where we used that $\theta^{(\ell)}_{t} \prec 0$, since $\theta^{(\ell)}_{t}=-\frac{1}{2}{S^{(\ell)}_{t}}^{-1}$ and the invese of a negative definite matrix is negative definite.
	  For the second integral term
	   \begin{align*}
	    \mathcal{I}_{2,i}^{(\ell)}(x) &= \int_{-\infty}^{x_i} \varphi_{2,i}^{(\ell)}( (x_{-i},\xi_i),\Theta^{(\ell)}_{t} )\exp\left( \inprod{\Theta^{(\ell)}_{t}}{c^{(\ell)}(x_{-i},\xi_i)-c^{(\ell)}(x)} \right)\drv \xi_i \\
	    &= \frac{1}{4}{\theta^{(\ell)}_{t}}^{-1} \int_{-\infty}^{x_i}E_i \bigg( 4\theta^{(\ell)}_{t} (c_{2}^{(\ell)}(x_{-i},\xi_i)-\nabla_{\theta} \psi_\ell(\Theta^{(\ell)}_{t}) ) \theta^{(\ell)}_{t} - 2\theta^{(\ell)}_{t} (x_{-i},\xi_i){\eta^{(\ell)}_{t}}\transp \\
	    &\hspace{8mm} + 2{\eta^{(\ell)}_{t}} (x_{-i},\xi_i)\transp \theta^{(\ell)}_{t} \bigg) {\theta^{(\ell)}_{t}}^{-1} \exp\left( \inprod{\Theta^{(\ell)}_{t}}{c^{(\ell)}(x_{-i},\xi_i)-c^{(\ell)}(x)} \right)\drv \xi_i \\
	    &=\frac{1}{4}{\theta^{(\ell)}_{t}}^{-1} \int_{-\infty}^{x_i}E_i\bigg( 2\theta^{(\ell)}_{t}  (x_{-i},\xi_i)(x_{-i},\xi_i)\transp 2\theta^{(\ell)}_{t}  -{\eta^{(\ell)}_{t}}{\eta^{(\ell)}_{t}}\transp +2\theta^{(\ell)}_{t} \\ 
	         &\hspace{0mm} - 2\theta^{(\ell)}_{t} (x_{-i},\xi_i) {\eta^{(\ell)}_{t}}\transp + 2 {\eta^{(\ell)}_{t}} (x_{-i},\xi_i)\transp \theta^{(\ell)}_{t} \bigg) {\theta^{(\ell)}_{t}}^{-1} \exp\left( \inprod{\Theta^{(\ell)}_{t}}{c^{(\ell)}(x_{-i},\xi_i)-c^{(\ell)}(x)} \right)\drv \xi_i,
	   \end{align*}
	   where we have used \eqref{eq:derivative:psi:theta}.
	  By expanding terms and using integration by parts together with the first assertion of this lemma
	  \begin{align*}
	    \mathcal{I}_{2,i}^{(\ell)}(x)   
	      &=\frac{1}{2} \int_{-\infty}^{x_i} {\frac{1}{2}{\theta^{(\ell)}_{t}}^{-1}E_i  (2\theta^{(\ell)}_{t}(x_{-i},\xi_i)+{\eta^{(\ell)}_{t}})}{(2\theta^{(\ell)}_{t}(x_{-i},\xi_i)-{\eta^{(\ell)}_{t}})\transp {\theta^{(\ell)}_{t}}^{-1}} \\
	      &\hspace{8mm} \exp\left( \inprod{\Theta^{(\ell)}_{t}}{c^{(\ell)}(x_{-i},\xi_i)-c^{(\ell)}(x)} \right)\drv \xi_i\\
	      &\hspace{8mm}+ \frac{1}{2}\int_{-\infty}^{x_i}{\theta^{(\ell)}_{t}}^{-1}E_i \exp\left( \inprod{\Theta^{(\ell)}_{t}}{c^{(\ell)}(x_{-i},\xi_i)-c^{(\ell)}(x)} \right)\drv \xi_i\\
	      &=\frac{1}{2}\mathcal{I}_{1,i}^{(\ell)}(x_i,x) (2\theta^{(\ell)}_{t} x-{\eta^{(\ell)}_{t}})\transp {\theta^{(\ell)}_{t}}^{-1} - \frac{1}{2}\int_{-\infty}^{x_i}\mathcal{I}_{1,i}^{(\ell)}(\xi_i,x)(2\theta^{(\ell)}_{t} e_i)\transp {\theta^{(\ell)}_{t}}^{-1} \drv \xi_i\\
	      &\hspace{8mm}+\frac{1}{2}\int_{-\infty}^{x_i}{\theta^{(\ell)}_{t}}^{-1}E_i \exp\left( \inprod{\Theta^{(\ell)}_{t}}{c^{(\ell)}(x_{-i},\xi_i)-c^{(\ell)}(x)} \right)\drv \xi_i\\     
	      &=\frac{1}{4}{\theta^{(\ell)}_{t}}^{-1}e_i (2\theta^{(\ell)}_{t} x-{\eta^{(\ell)}_{t}})\transp {\theta^{(\ell)}_{t}}^{-1} \\
	      &\hspace{8mm}- \frac{1}{2}\int_{-\infty}^{x_i}\frac{1}{2}{\theta^{(\ell)}_{t}}^{-1}e_i \exp\left( \inprod{\Theta^{(\ell)}_{t}}{c^{(\ell)}(x_{-i},\xi_i)-c^{(\ell)}(x)} \right) e_i\transp 2\theta^{(\ell)}_{t} {\theta^{(\ell)}_{t}}^{-1} \drv \xi_i \\
	      &\hspace{8mm}+\frac{1}{2}\int_{-\infty}^{x_i}{\theta^{(\ell)}_{t}}^{-1}E_i \exp\left( \inprod{\Theta^{(\ell)}_{t}}{c^{(\ell)}(x_{-i},\xi_i)-c^{(\ell)}(x)} \right)\drv \xi_i\\ 
	      &=\frac{1}{4}{\theta^{(\ell)}_{t}}^{-1}e_i (2\theta^{(\ell)}_{t} x-{\eta^{(\ell)}_{t}})\transp {\theta^{(\ell)}_{t}}^{-1}.\qquad\endproof
	  \end{align*}

	 {\em Proof of Proposition~\ref{prop:drift:fct}}
	   We decompose the function $u_i(x,t)$, given by Theorem \ref{thm:brigo_mixture_multi} into $u_i(x,t)=g_i(x,t)+\gamma_i(x,t)$ for all $i=1,\hdots,n,$ where
	   \begin{align*} 
	    g_i(x,t):=&\frac{1}{2}\sum_{j=1}^n\frac{\partial}{\partial x_j}a_{ij}(x) + \frac{1}{2}\sum_{j=1}^n a_{ij}(x) \frac{\frac{\partial}{\partial x_j}p(x,\Theta^{(1)}_t,\hdots,\Theta^{(k)}_t)}{p(x,\Theta^{(1)}_t,\hdots,\Theta^{(k)}_t)} \\
	    \gamma_i(x,t) :=& \frac{-1}{p(x,\Theta^{(1)}_t,\hdots,\Theta^{(k)}_t)} \sum_{\ell=1}^k \nu_\ell p_\ell(x,\Theta^{(\ell)}_{t})\inprod{\dot{\Theta}^{(\ell)}_{t}}{\mathcal{I}_i^{(\ell)}(x)}.
	   \end{align*}
	   As a preliminary step by invoking Lemma \ref{lemma:integrals_multi_gauss}
	 \begin{align*}
	    &\inprod{\dot{\Theta}^{(\ell)}_{t}}{\mathcal{I}_i^{(\ell)}(x)} =  \inprod{\dot{\eta}_{t}^{(\ell)} {}}{\mathcal{I}_{1,i}^{(\ell)}(x)} + \inprod{\dot{\theta}^{(\ell)}_{t} }{\mathcal{I}_{2,i}^{(\ell)}(x)}\\
	      &=\inprod{\dot{\eta}_{t}^{(\ell)} {}}{\frac{1}{2}{\theta^{(\ell)}_{t}}^{-1}e_i}+\inprod{\dot{\theta}^{(\ell)}_{t} }{\frac{1}{2}{\theta^{(\ell)}_{t}}^{-1}e_ix\transp - \frac{1}{4}{\theta^{(\ell)}_{t}}^{-1}e_i{\eta^{(\ell)}_{t}}\transp{\theta^{(\ell)}_{t}}^{-1}} \\
	      &=\frac{1}{2}\dot{\eta}_{t}^{(\ell)} {}\transp({\theta^{(\ell)}_{t}}^{-1}e_i) + \frac{1}{2}\tr\left(\dot{\theta}^{(\ell)}_{t} ({\theta^{(\ell)}_{t}}^{-1}e_ix\transp)\transp\right)-\frac{1}{4}\tr\left(\dot{\theta}^{(\ell)}_{t} ({\theta^{(\ell)}_{t}}^{-1}e_i{\eta^{(\ell)}_{t}}\transp{\theta^{(\ell)}_{t}}^{-1})\transp\right) \\
	      &=\frac{1}{2}\dot{\eta}_{t}^{(\ell)} {}\transp({\theta^{(\ell)}_{t}}^{-1}e_i)+\frac{1}{2}e_i\transp  {\theta^{(\ell)}_{t}}^{-1}\dot{\theta}^{(\ell)}_{t} x -\frac{1}{4}e_i\transp {\theta^{(\ell)}_{t}}^{-1}\dot{\theta}^{(\ell)}_{t} {\theta^{(\ell)}_{t}}^{-1}\eta^{(\ell)}_{t}. 
	    \end{align*}
	    Therefore, \vspace{-2mm}
	     \begin{align*}
	   \gamma(x,t) &= -\frac{1}{p(x,\Theta^{(1)}_t,\hdots,\Theta^{(k)}_t)}\sum_{\ell=1}^k \nu_\ell p_\ell(x,\Theta^{(\ell)}_{t})\\
	   &\hspace{8mm}\left( \frac{1}{2}{\theta^{(\ell)}_{t}}^{-1}\dot{\eta}_{t}^{(\ell)} + \frac{1}{2}{\theta^{(\ell)}_{t}}^{-1}\dot{\theta}^{(\ell)}_{t} x-\frac{1}{4}{\theta^{(\ell)}_{t}}^{-1}\dot{\theta}^{(\ell)}_{t} {\theta^{(\ell)}_{t}}^{-1}\eta_{t}^{(\ell)} \right). 
	  \end{align*}
	  Furthermore, \vspace{-2mm}
	    \begin{align*}
	   &\frac{\partial}{\partial x_j} p(x,\Theta^{(1)}_t,\hdots,\Theta^{(k)}_t) = \sum_{\ell=1}^k \nu_\ell \inprod{\Theta^{(\ell)}_{t}}{\frac{\partial c^{(\ell)}(x)}{\partial x_j}}\exp[\inprod{\Theta^{(\ell)}_{t}}{c^{(\ell)}(x)}-\psi_\ell(\Theta^{(\ell)}_{t})]\\
	   &= \sum_{\ell=1}^k \nu_\ell \inprod{\Theta^{(\ell)}_{t}}{\left( e_j,e_jx\transp + xe_j\transp \right)}\exp\left(\inprod{\Theta^{(\ell)}_{t}}{c^{(\ell)}(x)}-\psi_\ell(\Theta^{(\ell)}_{t})\right)\\
	     &= \sum_{\ell=1}^k \nu_\ell \left({\eta^{(\ell)}_{t}}\transp e_j + 2e_j\transp \theta^{(\ell)}_{t} x\right)\exp\left(\inprod{\Theta^{(\ell)}_{t}}{c^{(\ell)}(x)}-\psi_\ell(\Theta^{(\ell)}_{t})\right),
	  \end{align*}
	  and therefore \vspace{-2mm}
	  \begin{align*}
	   g(x,t) &= \frac{1}{2}\diverg{a(x)} + \frac{1}{2} \frac{\sum_{\ell=1}^k \nu_\ell p_\ell(x,\Theta^{(\ell)}_{t}) a(x) \left(\eta^{(\ell)}_{t}+ 2\theta^{(\ell)}_{t} x\right)}{p(x,\Theta^{(1)}_t,\hdots,\Theta^{(k)}_t)}.
	   \end{align*} 
Note that our choice of $\varphi_i^{(\ell)}$ satisfy \eqref{eq:cond_varphi_thm_multi_brigo} as shown in Lemma~\ref{lemma:choose_varphi}, which then completes the proof.
	 \qquad\endproof

	  \section{Proof of Theorem~\ref{thm:mean_variance} } \label{app:proof_thm_mS}
	  \begin{lemma} \label{lemma:eq-for-variance}
	  For an SDE of the form \eqref{eq:approx_SDE} the mean $m_{t}$ and covariance matrix $S_{t}$ of $X_{t}$ satisfy
	 \begin{equation}
	  \begin{aligned}
	  \drv m_{t} &= \Expec{u(X_{t},t)}\drv t, \\
	 \drv S_{t} &= \left( \Expec{X_{t}u(X_{t},t)\transp}+\Expec{u(X_{t},t)X_{t}\transp}+\Expec{\sigma(X_{t})\sigma(X_{t})\transp}\right. \\
		   &\hspace{8mm} \left. -m_{t}\Expec{u(X_{t},t)}\transp-\Expec{u(X_{t},t)}m_{t}\transp \right) \drv t.
	  \end{aligned}
	 \end{equation}
	\end{lemma}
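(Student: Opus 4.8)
\textit{Proof proposal.} The plan is to obtain both identities by applying It\^o's formula to $X_t$ and to the entrywise products $X_t^{(i)} X_t^{(j)}$ and then taking expectations; throughout, the assumptions of Theorem~\ref{thm:mean_variance} (hence Assumption~\ref{ass:brigo:thm}, in particular the global Lipschitz property of $u$ and $\sigma$ and the finiteness of all moments of $x_0$) are in force, which by the standard a priori estimate $\Expec{\sup_{s\le T}|X_s|^p}<\infty$ for every $p\ge 1$ guarantees that the local-martingale parts arising below are true martingales and that differentiation under the expectation (Fubini) is legitimate.

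First I would treat the mean. Writing \eqref{eq:approx_SDE} in integral form, $X_t = x_0 + \int_0^t u(X_s,s)\,\drv s + \int_0^t \sigma(X_s)\,\drv W_s$, and taking expectations componentwise annihilates the It\^o integral, so $m_t = \Expec{x_0} + \int_0^t \Expec{u(X_s,s)}\,\drv s$. Since $s\mapsto \Expec{u(X_s,s)}$ is continuous (continuity of $u$ together with the moment bound and dominated convergence), differentiating in $t$ gives $\drv m_t = \Expec{u(X_t,t)}\,\drv t$.

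Next I would compute the evolution of the second-moment matrix $P_t := \Expec{X_t X_t\transp}$. Applying It\^o's formula to $X_t^{(i)} X_t^{(j)}$, the first-order contribution is $X_t^{(i)} u_j(X_t,t) + u_i(X_t,t) X_t^{(j)}$, the second-order term contributes the covariation $\drv\langle X^{(i)},X^{(j)}\rangle_t = \big(\sigma(X_t)\sigma(X_t)\transp\big)_{ij}\,\drv t$, and the stochastic-integral part has zero expectation. Collecting entries into matrix form and differentiating yields $\tfrac{\drv P_t}{\drv t} = \Expec{X_t u(X_t,t)\transp} + \Expec{u(X_t,t) X_t\transp} + \Expec{\sigma(X_t)\sigma(X_t)\transp}$. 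Finally, since $S_t = P_t - m_t m_t\transp$, the product rule together with the mean ODE gives $\tfrac{\drv}{\drv t}\big(m_t m_t\transp\big) = \Expec{u(X_t,t)}\,m_t\transp + m_t\,\Expec{u(X_t,t)}\transp$, and subtracting this from $\drv P_t/\drv t$ produces exactly the stated formula for $\drv S_t$.

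The only real obstacle is the integrability/martingale bookkeeping rather than the algebra: I would first establish the $L^p$ moment estimates for $X$ on $[0,T]$ via Gronwall's inequality and the Burkholder--Davis--Gundy inequality under the global Lipschitz assumption, which both makes the It\^o integrals mean-zero and justifies exchanging $\drv/\drv t$ with $\Expec{\cdot}$; granted that, the remainder is a direct computation.
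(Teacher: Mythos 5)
Your proposal is correct and follows essentially the same route as the paper's proof: It\^o's formula applied to $X_tX_t\transp$, taking expectations to kill the stochastic integral, and subtracting $\drv(m_tm_t\transp)$ obtained from the mean equation. The only difference is that you make explicit the moment/martingale justifications that the paper leaves implicit, which is a welcome addition but not a change of method.
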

	\indent\textit{Proof.}
	 The equation for the mean is trivial. For the variance let $Y_{t}:=X_{t}X_{t}\transp$. According to It\^o's Lemma \cite{ref:Oksendal-03}
	   $\drv Y_{t} = X_{t} (u(X_{t},t)\drv t + \sigma(X_{t}) \drv B_{t})\transp + (u(X_{t},t)\drv t + \sigma(X_{t}) \drv B_{t})X_{t}\transp +\sigma(X_{t})\sigma(X_{t})\transp \drv t,$
	 and similarly
	   $\drv m^2_{t} = \left( m_{t}\Expec{u(X_{t},t)}\transp +\Expec{u(X_{t},t)}m_{t}\transp \right)\drv t.$
	 Hence,
	 \begin{equation*}
	  \begin{aligned}
	   \drv S(t) &= \Expec{\drv Y(t)}-\drv m_{t}^2 = \left( \Expec{X_{t}u(X_{t},t)\transp}+\Expec{u(X_{t},t)X_{t}\transp}+\Expec{\sigma(X_{t})\sigma(X_{t})\transp}\right. \\
		   &\hspace{38mm} \left. -m_{t}\Expec{u(X_{t},t)}\transp-\Expec{u(X_{t},t)}m_{t}\transp \right) \drv t. \qquad\endproof
	  \end{aligned}
	 \end{equation*}

	 \begin{lemma} \label{lemma:multi_mix:m,S}
	   Mean $m_{t}$ and variance $S_{t}$ satisfy
	   \begin{equation*}
	     m_{t} = \sum_{\ell=1}^k \nu_\ell m^{(\ell)}_{t}, \quad 
	     S_{t} = \sum_{\ell=1}^k \nu_\ell S^{(\ell)}_{t} + \sum_{\ell=1}^k\nu_\ell m^{(\ell)}_{t} {m^{(\ell)}_{t}}\transp - \left( \sum_{\ell =1}^k \nu_\ell  m^{(\ell)}_{t}  \right)\left( \sum_{\ell =1}^k \nu_\ell  m^{(\ell)}_{t}  \right)\transp.
	   \end{equation*}
	  \end{lemma}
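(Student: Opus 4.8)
The plan is to use the single structural fact that the proof really needs, namely that by Theorem~\ref{thm:brigo_mixture_multi} the marginal law of $X_t$ has density $p(x,\Theta^{(1)}_t,\hdots,\Theta^{(k)}_t)=\sum_{\ell=1}^k \nu_\ell p_\ell(x,\Theta^{(\ell)}_{t})$ with $\nu\in\Delta_k$. Consequently every moment of $X_t$ is the $\nu$-convex combination of the corresponding moments of the component densities $p_\ell$, and the lemma reduces to the standard identity expressing a mixture's covariance through the component means and covariances.

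First I would establish the mean identity. By linearity of the integral and the fact that $m^{(\ell)}_{t}$ is the mean under $p_\ell(\cdot,\Theta^{(\ell)}_{t})$,
\[
m_t=\int_{\R^n} x\,p(x,\Theta^{(1)}_t,\hdots,\Theta^{(k)}_t)\,\drv x=\sum_{\ell=1}^k \nu_\ell\int_{\R^n} x\,p_\ell(x,\Theta^{(\ell)}_{t})\,\drv x=\sum_{\ell=1}^k \nu_\ell m^{(\ell)}_{t}.
\]
The interchange of the finite sum with the integral, and the finiteness of all integrals that appear, is justified by Assumption~\ref{ass:brigo:thm}, which guarantees finite moments of every order.

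Next I would treat the covariance. Applying the elementary decomposition $\CExpec{XX\transp}{p_\ell}=S^{(\ell)}_{t}+m^{(\ell)}_{t}{m^{(\ell)}_{t}}\transp$ componentwise and summing against $\nu$ gives
\[
\int_{\R^n} xx\transp\, p(x,\Theta^{(1)}_t,\hdots,\Theta^{(k)}_t)\,\drv x=\sum_{\ell=1}^k \nu_\ell\bigl(S^{(\ell)}_{t}+m^{(\ell)}_{t}{m^{(\ell)}_{t}}\transp\bigr).
\]
Then, using $S_t=\int_{\R^n} xx\transp\,p\,\drv x-m_tm_t\transp$ together with the already established identity $m_t=\sum_{\ell}\nu_\ell m^{(\ell)}_{t}$ to expand $m_tm_t\transp$, I obtain
\[
S_t=\sum_{\ell=1}^k \nu_\ell S^{(\ell)}_{t}+\sum_{\ell=1}^k \nu_\ell m^{(\ell)}_{t}{m^{(\ell)}_{t}}\transp-\Bigl(\sum_{\ell=1}^k \nu_\ell m^{(\ell)}_{t}\Bigr)\Bigl(\sum_{\ell=1}^k \nu_\ell m^{(\ell)}_{t}\Bigr)\transp,
\]
which is the claimed formula.

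I do not expect a genuine obstacle here: the argument is purely a matter of linearity of expectation combined with the bias--variance-type identity ``uncentered second moment $=$ covariance $+$ outer product of the mean'', applied once to the mixture density and once to each of its components. The only point worth a sentence of justification is the legitimacy of swapping the finite sum with the integral and the finiteness of the second moments, both of which follow immediately from the standing moment hypothesis in Assumption~\ref{ass:brigo:thm}.
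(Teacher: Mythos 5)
Your proof is correct and follows essentially the same route as the paper: linearity of expectation over the mixture density for the mean, and the identity $\CExpec{XX\transp}{p_\ell}=S^{(\ell)}_{t}+m^{(\ell)}_{t}{m^{(\ell)}_{t}}\transp$ combined with $S_t=\Expec{XX\transp}-m_tm_t\transp$ for the covariance. The only difference is cosmetic (you expand the component second moments first, the paper adds and subtracts $\sum_\ell \nu_\ell m^{(\ell)}_{t}{m^{(\ell)}_{t}}\transp$), and your explicit appeal to Assumption~\ref{ass:brigo:thm} for finiteness of moments is a harmless extra precaution.
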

	 \indent \textit{Proof.}
	  The statement for the mean is straightforward. For the variance,
	   \begin{align*}
	      S_{t} 
	      &= \sum_{\ell =1}^k \nu_\ell  \CExpec{XX\transp}{p_\ell }- \left( \sum_{\ell =1}^k \nu_\ell  m^{(\ell)}_{t}  \right)\left( \sum_{\ell =1}^k \nu_\ell  m^{(\ell)}_{t}  \right)\transp \\
	      &= \sum_{\ell =1}^k \nu_\ell  \left( \CExpec{XX\transp}{p_\ell }-m^{(\ell)}_{t} {m^{(\ell)}_{t}}^{-1} \right) + \sum_{\ell =1}^k \nu_\ell  m^{(\ell)}_{t} {m^{(\ell)}_{t}}^{-1} - \left( \sum_{\ell =1}^k \nu_\ell  m^{(\ell)}_{t}  \right)\left( \sum_{\ell =1}^k \nu_\ell  m^{(\ell)}_{t}  \right)\transp\endproof 
	   \end{align*}

	 \indent \textit{Proof of Theorem \ref{thm:mean_variance}}
	   Consider a drift function $u(x,t)$ given by \eqref{eq:ansatz_multi_mix}. In view of Lemma \ref{lemma:eq-for-variance}
	  \begin{align*}
	   &\frac{\drv m_{t}}{\drv t} 
	   &= \sum_{\ell=1}^k \nu_\ell\bigg( \CExpec{\frac{1}{2}\diverg{a(X)}}{p_\ell} + A^{(\ell)}_{t} + B^{(\ell)}_{t} m^{(\ell)}_{t} + \CExpec{a(X)}{p_\ell}C^{(\ell)}_{t} + \CExpec{a(X)D^{(\ell)}_{t} X}{p_\ell}\bigg),
	  \end{align*}
	  which can be simplified according to Lemma \ref{lemma:multi_mix:m,S}, such that
	  \begin{equation} \label{e:proof:thm:dm}
	  \begin{aligned}
	   &\sum_{\ell=1}^k \nu_\ell \frac{\drv m^{(\ell)}_{t}}{\drv t} = \sum_{\ell=1}^k \nu_\ell\bigg( \CExpec{\frac{1}{2}\diverg{a(X)}}{p_\ell} + A^{(\ell)}_{t} \\
	   &\hspace{30mm} + B^{(\ell)}_{t} m^{(\ell)}_{t} + \CExpec{a(X)}{p_\ell}C^{(\ell)}_{t} + \CExpec{a(X)D^{(\ell)}_{t} X}{p_\ell}\bigg).
	  \end{aligned}
	  \end{equation}
	  Note that \eqref{e:proof:thm:dm} has to hold for all $\nu_\ell\geq 0$ such that $\sum_{\ell=1}^k \nu_\ell=1$. 
	  Therefore
	  \begin{equation} \label{e:proof:thm:m}
	  \begin{aligned}
	   \frac{\drv m^{(\ell)}_{t}}{\drv t} &= \CExpec{\frac{1}{2}\diverg{a(X)}}{p_\ell} + A^{(\ell)}_{t} + B^{(\ell)}_{t} m^{(\ell)}_{t} + \CExpec{a(X)}{p_\ell}C^{(\ell)}_{t} + \CExpec{a(X)D^{(\ell)}_{t} X}{p_\ell}.
	  \end{aligned}
	  \end{equation}
	 For the variance, we have according to Lemma \ref{lemma:multi_mix:m,S}
	\begin{align*}
	  \frac{\drv S_{t}}{\drv t} 
	    &=\sum_{\ell=1}^k \nu_\ell \left( \frac{\drv S^{(\ell)}_{t}}{\drv t} +  \frac{\drv m^{(\ell)}_{t}}{\drv t}{m^{(\ell)}_{t}}\transp + m^{(\ell)}_{t} \left( \frac{\drv m^{(\ell)}_{t}}{\drv t} \right)\transp   -  \frac{\drv m^{(\ell)}_{t}}{\drv t}  m_{t}\transp - m_{t}\left(\frac{\drv m^{(\ell)}_{t}}{\drv t}  \right)\transp \right).
	\end{align*}
	 This implies
	 \begin{align*}
	  \sum_{\ell=1}^k \nu_\ell \frac{\drv S^{(\ell)}_{t}}{\drv t} = \frac{\drv S}{\drv t} + \sum_{\ell=1}^k \nu_\ell \left( \frac{\drv m^{(\ell)}_{t}}{\drv t}(m_{t}\transp-{m^{(\ell)}_{t}}\transp) + (m_{t}-m^{(\ell)}_{t})\left( \frac{\drv m^{(\ell)}_{t}}{\drv t}\right)\transp \right),
	 \end{align*}
	 where $\frac{\drv S_{t}}{\drv t}$ is given according to Lemma \ref{lemma:eq-for-variance}, by \vspace{-2mm}
	 \begin{align*}
	 \frac{\drv S_{t}}{\drv t} =  \Expec{X_{t}u(X_{t},t)\transp}+\Expec{u(X_{t},t)X_{t}\transp}+\Expec{\sigma(X_{t})\sigma(X_{t})\transp}  -m_{t}\left( \frac{\drv m_{t}}{\drv t} \right)\transp-\frac{\drv m_{t}}{\drv t}m_{t}\transp.   
	 \end{align*}
	  Therefore, \vspace{-2mm}
	 \begin{equation} \label{eq:dS_ell_first}
	 \begin{aligned}
	   \sum_{\ell=1}^k \nu_\ell \frac{\drv S^{(\ell)}_{t}}{\drv t} &= \Expec{X_{t}u(X_{t},t)\transp}+\Expec{u(X_{t},t)X_{t}\transp}+\Expec{\sigma(X_{t})\sigma(X_{t})\transp} \\
	   &\hspace{8mm} - \sum_{\ell=1}^k \nu_\ell \left( \frac{\drv m^{(\ell)}_{t}}{\drv t}{m^{(\ell)}_{t}}\transp + m^{(\ell)}_{t}\left( \frac{\drv m^{(\ell)}_{t}}{\drv t}\right)\transp \right).
	 \end{aligned} \vspace{-2mm}
	 \end{equation}
	 Recall that $\frac{\drv m^{(\ell)}_{t}}{\drv t}$ is given by \eqref{e:proof:thm:m}. Next, we compute \vspace{-2mm}
	  \begin{align*}
	   \Expec{Xu(X,t)\transp} 
	   &= \sum_{\ell=1}^k \nu_\ell\bigg(  \CExpec{\frac{1}{2}X\diverg{a(X)}\transp}{p_\ell} +m^{(\ell)}_{t} {A^{(\ell)}_{t}}\transp +  (m^{(\ell)}_{t} {m^{(\ell)}_{t}}\transp + S^{(\ell)}_{t}){B^{(\ell)}_{t}}\transp \\ 
	   &\hspace{8mm}+ \CExpec{X{C^{(\ell)}_{t}}\transp a(X)}{p_\ell} + \CExpec{XX\transp D^{(\ell)}_{t} a(X)}{p_\ell} \bigg), \\
	    \Expec{u(X,t)X\transp} 
	    &= \sum_{\ell=1}^k \nu_\ell\bigg(  \CExpec{\frac{1}{2}\diverg{a(X)}X\transp}{p_\ell} +A^{(\ell)}_{t} {m^{(\ell)}_{t}}^{-1} +  B^{(\ell)}_{t}(m^{(\ell)}_{t} {m^{(\ell)}_{t}}\transp + S^{(\ell)}_{t})\\ 
	   &\hspace{8mm}+ \CExpec{a(X)C^{(\ell)}_{t} X\transp}{p_\ell} + \CExpec{a(X) D^{(\ell)}_{t} XX\transp}{p_\ell} \bigg), \\
	   \CExpec{\sigma(X)\sigma(X)\transp}{p} &= \CExpec{a(X)}{p} =\sum_{\ell=1}^k \nu_\ell \CExpec{a(X)}{p_\ell},  
	   \end{align*} 
	  such that by evaluating \eqref{eq:dS_ell_first} and by recalling that it has to hold for all convex combinations, we get the assertion \eqref{e:thm:var}. \qquad\endproof

\end{appendix}

\vskip 0.2in
\bibliography{ref}

\end{document}